\newcommand{\ew}[1]{{\color{black}#1}}
\newcommand{\ndgd}{\textcolor{black}{NEAR-DGD }}
\newcommand{\ndgdt}{\textcolor{black}{NEAR-DGD$^t$ }}
\newcommand{\ndgdp}{\textcolor{black}{NEAR-DGD$^+$ }}
\newcommand{\norm}[1]{\left|\left|#1\right|\right|}
\newtheorem{thm}{Theorem}[section]
\newtheorem{lem}[thm]{Lemma}
\newtheorem{cor}[thm]{Corollary}
\newtheorem{assum}[thm]{Assumption}
\numberwithin{equation}{section}
\begin{document}
%
% paper title
% Titles are generally capitalized except for words such as a, an, and, as,
% at, but, by, for, in, nor, of, on, or, the, to and up, which are usually
% not capitalized unless they are the first or last word of the title.
% Linebreaks \\ can be used within to get better formatting as desired.
% Do not put math or special symbols in the title.
\title{Balancing Communication and Computation in Distributed Optimization}
%
%
% author names and IEEE memberships
% note positions of commas and nonbreaking spaces ( ~ ) LaTeX will not break
% a structure at a ~ so this keeps an author's name from being broken across
% two lines.
% use \thanks{} to gain access to the first footnote area
% a separate \thanks must be used for each paragraph as LaTeX2e's \thanks
% was not built to handle multiple paragraphs
%

\author{Albert~S.~Berahas,
        Raghu~Bollapragada,
        Nitish~Shirish~Keskar,
        and~Ermin~Wei,~\IEEEmembership{Member,~IEEE}% <-this % stops a space
\thanks{A. S. Berahas was with the Department of Industrial Engineering and Management Sciences, Northwestern University,
       Evanston, IL, USA. (email: {albertberahas@u.northwestern.edu})}% <-this % stops a space
\thanks{R. Bollapragada was with the Department of Industrial Engineering and Management Sciences, Northwestern University,
       Evanston, IL, USA. (email: {raghu.bollapragada@u.northwestern.edu})}% <-this % stops a space
       \thanks{N. S. Keskar was with Salesforce Research, Palo Alto, CA, USA. Work was performed when author was at Northwestern University. (email: {keskar.nitish@u.northwestern.edu})}% <-this % stops a space
\thanks{E. Wei was with the Department of Electrical Engineering and Computer Science, Northwestern University,
       Evanston, IL, USA. (email: {ermin.wei@northwestern.edu})}}

\maketitle

% As a general rule, do not put math, special symbols or citations
% in the abstract or keywords.
\begin{abstract}
Methods for distributed optimization have received significant attention in  recent years owing to their wide applicability in various domains including machine learning, robotics and sensor networks. A distributed optimization method typically consists of two key components: communication and computation. More specifically, at every iteration (or every several iterations) of a distributed algorithm, each node in the network requires some form of information exchange with its neighboring nodes (communication) and the computation step related to a (sub)-gradient (computation). The standard way of judging an algorithm via only the number of iterations overlooks the complexity associated with each iteration. Moreover, various applications deploying distributed methods may prefer a different composition of communication and computation.

Motivated by this discrepancy, in this work we propose an adaptive cost framework which adjusts the cost measure depending on the features of various applications. We present a flexible algorithmic framework, where communication and computation steps are explicitly decomposed to enable algorithm customization for various applications. We apply this framework to the well-known distributed gradient descent (DGD) method, and show that the resulting customized algorithms, which we call DGD$^t$, NEAR-DGD$^t$ and NEAR-DGD$^+$, compare favorably to their base algorithms, both theoretically and empirically. The proposed NEAR-DGD$^+$ algorithm is an exact first-order method where the communication and computation steps are nested, and when the number of communication steps is adaptively increased, the method converges to the optimal solution. We test the performance and illustrate the flexibility of the methods, as well as practical variants, on quadratic functions and classification problems that arise in machine learning, in terms of iterations, gradient evaluations, communications and the proposed cost framework.
\end{abstract}

% Note that keywords are not normally used for peerreview papers.
\begin{IEEEkeywords}
Distributed Optimization, Communication, Optimization Algorithms, Network Optimization
\end{IEEEkeywords}

% For peer review papers, you can put extra information on the cover
% page as needed:
% \ifCLASSOPTIONpeerreview
% \begin{center} \bfseries EDICS Category: 3-BBND \end{center}
% \fi
%
% For peerreview papers, this IEEEtran command inserts a page break and
% creates the second title. It will be ignored for other modes.
\IEEEpeerreviewmaketitle

\section{Introduction}
\label{sec:intro}
%The problem of optimizing an objective function in a distributed way using multiple agents in a connected network has gained significant attention over the last years.
\IEEEPARstart{T}{he} problem of optimizing an objective function by employing a distributed procedure using multiple agents in a connected network has gained significant attention over the last years. This is motivated by its wide applicability to many important engineering and scientific domains such as, wireless sensor networks \cite{ling2010decentralized,predd2006distributed,giannakisAdHoc,zhao2002information}, multi-vehicle and multi-robot networks \cite{cao2013overview,ren2007information,zhou2011multirobot}, smart grids \cite{giannakis2013monitoring,kekatos2013distributed} and machine learning \cite{duchi2012,tsianos2012consensus}. In such problems, each agent (or node) has access to a component of the overall objective function and can only communicate with its neighbors in the underlying network. The collective goal is to minimize the summation of individual components. Formally, the system-wide problem can be represented as
\begin{align}		\label{eq:prob}
	\min_{x\in \mathbb{R}^p}\quad h(x) =  \sum_{i=1}^n f_i(x),
\end{align}
where $n$ represents the number of agents in the network, convex function $h: \mathbb{R}^p \rightarrow \mathbb{R}$ is the {\it global objective function}, convex function $f_i: \mathbb{R}^p \rightarrow \mathbb{R}$ for each \textcolor{black}{$i\in \{1,2,...,n \}$} is the {\it local objective function} available only to node $i$, and vector $x\in \mathbb{R}^p$ is the decision variable that the agents are optimizing cooperatively. This setup naturally calls for {\it distributed (optimization) algorithms}, where the agents iteratively perform local {\it computations} based on a local objective function and local {\it communications}, i.e., information exchange with their immediate neighbors in the underlying network, to solve the system-wide problem \eqref{eq:prob}.

In order to decouple the computation of individual agents, %agents' computation,
problem \eqref{eq:prob} is often reformulated as the  following consensus optimization problem by introducing a local copy $x_i \in \mathbb{R}^p$ for each agent $i\in \{1,2,...,n \}$ \cite{bertsekas1989parallel,nedic2009distributed},
\begin{align}		\label{eq:cons_prob}
	\min_{x_i \in \mathbb{R}^p}&\quad \sum_{i=1}^n f_i(x_i)\\
	 \text{s.t.} &\quad  x_i = x_j, \quad \forall i, j \in \mathcal{N}_i, \nonumber
\end{align}
where $\mathcal{N}_i$ denotes the set of neighbors of agent $i$. In this formulation, the local objective function of the $i^{th}$ agent only depends on the local copy $x_i$. An equality constraint, often referred to as the {\it consensus} constraint, is imposed to enforce that the local copies of neighboring nodes are equal. Since the underlying network is connected and the consensus constraint ensures that all local copies are equal,  problems \eqref{eq:prob} and \eqref{eq:cons_prob} are equivalent.

While there is a proliferating literature on developing distributed optimization methods for the above problem, most
%of them
follow the conventional approach of tracking the number of iterations to judge the efficiency of a distributed algorithm, i.e., the best algorithm achieves optimality in the minimal number of iterations,
%the best algorithm has minimal number of iterations to optimality,
and overlook the complexity associated with each iteration. In this work, we propose an alternative metric, an adaptive cost framework (in Section \ref{sec:cost}) to account for the different environments and hardware constraints of various applications, where distributed optimization methods are used. In this new cost framework, we consider communication and computation costs separately and weigh them using parameters specific to the environment. This %new
cost framework also motivates our development of a class of flexible distributed methods, where we decompose communication and computation steps. This new class of algorithms is then customizable depending on the application.

\subsection{Literature Review}\label{sec:lit}

Our work is related to the growing literature on distributed algorithms for solving problem (\ref{eq:cons_prob}). We outline the various lines of research next.  One class of methods build upon the seminal works \cite{bertsekas1989parallel,TsitsiklisThes}, which proposed a parallel computation framework. In \cite{nedic2009distributed}, the authors introduced a first-order primal iterative method, known as {\it distributed (sub)-gradient descent (DGD)}. In one step of DGD, each agent updates its estimate of the solution via a linear combination of a gradient descent step with respect to its local objective function and a weighted average with local neighbors (also known as a consensus step). A number of later contributions \cite{bo6,aliLinMorse, MouraFastGrad, JJSubgradient,LORandomNetwork,LobelOzdaglarFeijer,MateiBaras,ne11,NOOTQuantization,AsuChapter,averagepaper,NOPConstrained,SNVStochasticGradient,ra10, extra,srivastava2011distributed} extended DGD to other settings, including stochastic networks, constrained problems, and noisy environments. {\it Coordinate descent} type methods, in either primal or dual space, have also been used in the distributed setting \cite{hong2016unified,pe16,richtarik2016parallel}. Another line of research is based on Nesterov's {\it dual averaging} algorithm \cite{NesterovDualAvg}, whose distributed version was proposed and analyzed in \cite{duchi2012}. %The class of
{\it Dual decomposition based} methods have also recently  gained much attention. This class of methods includes augmented Lagrangian methods and Alternating Direction Method of Multiplier (ADMM) \cite{Bertsekas2012Survey,ADMMBoyd,chang2015asynchronous,Eckstein2012,PontusDiagADMM, IutzelerRand, MotaColoring,Mota2012,giannakisAdHoc,WeiCDCADMM, wei2013AsynADMM,proxSVRG,GiannakisChannelDecoding}. The last category includes {\it second-order methods}, where Newton-type methods are used to obtain faster rates of convergence \cite{eisen2017decentralized,Mansoori17Superlinear,mokhtariNetworkNewtonI, mokhtariNetworkNewtonII}. All these methods adopt the standard iteration count metric.

%\todo[inline]{AB: Should the first sentence in the next paragraph be the last sentence of the previous paragraph? \ew{Good idea, moved it}}

Closely related to our work are a few very recent contributions that incorporate communication considerations in the design of distributed algorithms \cite{chow2016expander,lan2017communication,shamir2014communication, tsianos2012communication,zhang2015disco, zhang2012communication}. \ew{In \cite{lan2017communication,shamir2014communication,zhang2015disco, zhang2012communication}, the goal was to develop one particular method, where the number of communication steps is reasonable compared to the iteration complexity.} While these methods are communication-efficient (with respect to some metric), they lack the flexibility to adapt to different environments. In \cite{chow2016expander}, the authors consider how to design a network topology that is communication-efficient, assuming the network topology can be controlled.  The closest work to ours is \cite{tsianos2012communication}. In this recent work, the authors control the frequency of communication steps and analyze the time performance of a dual averaging based algorithm. The goal of \cite{tsianos2012communication} was to show that speed-up is possible when the agents communicate less frequently. While this work pioneers the idea of adjusting the relative frequencies of communication and computation, it focuses solely on reducing the runtime of the algorithm and overlooks other important aspects, such as energy consumption.

%\todo[inline]{\ew{I've added the Berkeley guy, he's the one behind [66] (DiSCO) and [67]. Do we need to cite his thesis? I didn't read all of it...}}

In this paper, in order to demonstrate our new cost framework \eqref{eq:cost}, we consider decoupling the communication and computation steps of the well-studied DGD method and vary the frequency of these steps. On this front, our work is related to  \cite{sayed2013diffusion, chen2012fast, MouraFastGrad}. {In \cite{chen2012fast}, the authors propose to increase the number of communication steps at rate $k$, where $k$ is the number of iterations, to ensure convergence with a fixed stepsize (aka steplength) for proximal gradient-based algorithms on composite nonsmooth convex problems.} In \cite{MouraFastGrad}, the authors extended this idea to smooth problems and propose to increase communication at rate $\log(k)$.

In \cite{sayed2013diffusion}, the author propose two algorithms for quadratic problems: CTA (Combine-then-Adapt) and ATC (Adapt-then-Combine). Both of these methods stem from a new way of combining communication and computation steps and they differ in the order that the consensus and gradient operations are performed. {While our method employs a similar way to decompose and combine communication and computation steps, our method is not restricted to using exactly one consensus step and one gradient step at each iteration. Hence, it is more general and offers flexibility in how to combine these steps.} {In particular, our convergence guarantee holds for many different ways of increasing the frequency of communication including $k$ and $\log(k)$, as appeared in \cite{chen2012fast} and \cite{MouraFastGrad}, respectively.}

%\todo[inline]{AB: The last sentence in the paragraph above is not very clear. \ew{Rewritten, and check again}}

One major problem of the standard DGD method is that it only converges to a neighborhood of the optimal solution when a constant stepsize is employed. Recently, there have been many new distributed algorithms \cite{extra, di2016next,nedich2016achieving, qu2017harnessing,li2017decentralized, eisen2017decentralized} that can achieve exact convergence with a constant stepsize. In \cite{extra, qu2017harnessing,nedich2016achieving, eisen2017decentralized}, the authors also show their respective algorithms can achieve a linear rate of convergence. Our work is also related to this line of literature as many instances of our proposed class of algorithms achieve exact convergence. Moreover, one instance of our method converges linearly with respect to number of gradient computations, and sublinearly with respect to number of communications or our cost framework. While the proposed method has many similarities with the existing algorithms, it also boasts unique flexibility and adaptability.

\subsection{Contributions}

Our innovations in this paper are on two fronts:
%one on
(i) the new adaptive cost framework, and
%the other one on
(ii) the proposed class of flexible algorithms motivated by this framework. We next describe our main contributions:
\begin{itemize}
	\item We introduce a metric of performance based on the weighted combination of costs of both communication and computation steps. This metric is adaptive to, and can accurately characterize, different features of the application environments independent of the distributed algorithms.
\item
We decompose the communication and computation steps of DGD to enable algorithm customization. Based on this, we propose three classes of related flexible algorithms, which we call DGD$^t$, \ndgdt and NEAR-DGD$^+$. We can tune the  instances in these classes to balance the communication and computation costs according to the application.
\item We develop a class of exact first-order methods with constant stepsize (%NEAR-DGD$^t$,
NEAR-DGD$^+$), based on nesting the communication and computation steps, and increasing the number of consensus steps performed as the algorithm evolves. When we increase the number of consensus steps at rate $k$, where $k$ is the number of iterations, then we obtain an algorithm that achieves exact convergence at a linear rate. In particular, to get an $\epsilon$-accurate solution, we need $\mathcal{O}(\log(\sfrac{1}{\epsilon}))$ numbers of gradient computations and $\mathcal{O}(\log(\sfrac{1}{\epsilon})^2)$ number of communication rounds.
\item We illustrate the empirical performance of some instances of the proposed class of methods on quadratic and logistic regression problems as measured by
%against
our new cost framework. We also demonstrate some practical instances of the class of methods that perform very well in practice in terms of iterations, number of communications, number of gradients and combined cost.
\end{itemize}

{In summary, our main contribution is the proposed cost framework in conjunction with the decomposition of the communication and computation steps. This allows for flexibility in algorithmic design, for a class of theoretically sound and efficient algorithms, and the first step towards harmonization of the communication and computation costs.}

The paper is organized as follows. In Section \ref{sec:cost} we describe in detail our proposed cost framework. Section \ref{sec:prelim} reviews relevant distributed optimization preliminaries such as reformulations of \eqref{eq:cons_prob}, and the DGD method. The variant of DGD method with multiple consensus steps, which we call DGD$^{t}$, is introduced and analyzed in Section \ref{sec:DGDt}. In Section \ref{sec:dgd_variants}, we describe the new \ndgdt and \ndgdp methods, provide theoretical analysis of the variants and also present numerical results. We provide some final remarks and future directions of research in Section \ref{sec:fin_remarks}.

%%%%%%%%%%%%%%%%%%%%%%%%%%%%%%%%%%%%%%%%%%%%%%%%%%%%%%
%%%%%%%%%%%%%%%%%%%%%%%%%%%%%%%%%%%%%%%%%%%%%%%%%%%%%%
\section{Adaptive Cost Framework}
\label{sec:cost}

A typical iteration of a distributed optimization method consists of some local computation (typically gradient or Hessian evaluation) and neighborhood communication.  While the amount of computation and communication per iteration differs from one algorithm to another, all iterations
are counted blindly as equal in the traditional iteration counting metric.  Moreover, as distributed algorithms are deployed in various contexts, the diverse range of scenarios calls for different ways to account for the cost (in terms of time, energy, or any other metric) of an algorithm. To illustrate this we discuss two motivating examples. Consider first the problem of controlling a swarm of battery powered robots, with low-energy computation modules onboard, connected via an energy-intense communication protocol. Since the robots have limited energy supply, communication steps can be very expensive, while longer task completion times may not be problematic. On the other hand, we consider solving a large-scale machine learning problem on a cluster of computers that are physically connected or with shared memory access. In this case, the cost of communication can be ignored (inexpensive in terms of time), while the computational cost (time) can be expensive depending on the size of the data set on each machine. Hence, a desirable metric should not only count the total number of communication and computation steps, it should also weigh the two appropriately according to different environments.

We propose an adaptive cost framework to evaluate the performance of distributed optimization methods which explicitly accounts for the cost of communication and computation, and can be customized depending on the specific application. In particular, we propose the following simple yet powerful metric
\begin{align}		\label{eq:cost}
\text{Cost} = \# \text{Communications} \times c_c + \# \text{Computations} \times c_g,
\end{align}
where $c_c$ and $c_g$ are exogenous application-dependent parameters reflecting the costs of communication and computation, respectively. For instance, when energy is the most constraining resource of the environment, the parameters $c_c$ and $c_g$ would reflect the energy consumed per step of communication/computation. Similarly, when the runtime is of concern,  the parameters would correspond to the time needed for a communication/computation step. This cost could also represent some combination of time and energy. \ew{In the battery powered robots example we would have $c_c>c_g$, and in the machine learning example we would have $c_c<c_g$.}  We note that if the cost of communication and computation of one iteration is $1$, then to design an algorithm with minimal cost reduces to the standard problem of finding algorithm with the least iteration count.

%%%%%%%%%%%%%%%%%%%%%%%%%%%%%%%%%%%%%%%%%%%%%%%%%%%%%%
%%%%%%%%%%%%%%%%%%%%%%%%%%%%%%%%%%%%%%%%%%%%%%%%%%%%%%
\section{Preliminaries}
\label{sec:prelim}

\ew{In this section, we introduce an equivalent compact reformulation of problem \eqref{eq:cons_prob} and review the basics of the DGD method, both of which will be used to build our class of flexible algorithms. }

\subsection{Equivalent Reformulations}

For compactness, we express problem \eqref{eq:cons_prob} as
\begin{align}		\label{eq:cons_prob1}
	\min_{x_i \in \mathbb{R}^p}&\quad f(\textbf{x}) = \sum_{i=1}^n f_i(x_i)\\
	\text{s.t.} & \quad (\textbf{W}\otimes I_p)\textbf{x} = \textbf{x} \nonumber
\end{align}
where $\textbf{x} \in \mathbb{R}^{np}$ is a concatenation of all local $x_i$'s and  $\textbf{W}$ is a matrix of size $\mathbb{R}^{n \times n}$, i.e.,
\begin{align*}
	\textbf{x} =
	\begin{bmatrix}
		x_{1}  \\
		x_{2}  \\
		\vdots  \\
		x_{n}
	\end{bmatrix},% \in \mathbb{R}^{np},
	\quad
	\textbf{W} =
	\begin{bmatrix}
		w_{11}  & w_{12} & \cdots & w_{1n} \\
		w_{21} & w_{22} & \cdots & w_{2n} \\
		\vdots & \vdots & \ddots & \vdots\\
		w_{n1} & w_{n2} & \cdots & w_{nn}
	\end{bmatrix}.% \in \mathbb{R}^{n \times n}.
	%\quad
\end{align*}
Matrix $I_p$ is the identity matrix of dimension $p$, and the operator $\otimes$ denotes the Kronecker product operation, with $\textbf{W}\otimes I_p \in \mathbb{R}^{np \times np}$. Moreover, matrix $\textbf{W}$ has the following properties: it is symmetric, doubly-stochastic, \ew{with diagonal elements $w_{ii}>0$ and  off-diagonal elements $w_{ij}>0$ ($i\neq j$) if and only if $i$ and $j$ are neighbors in the underlying communication network}. \ew{Matrix $\textbf{W}$ is known as the {\it consensus matrix} and it has the property that $(\textbf{W}\otimes I_p) \textbf{x}=\textbf{x}$ if and only if $x_i=x_j$ for all $i$ and $j$ in the connected network \cite{nedic2009distributed}, i.e., problems \eqref{eq:cons_prob} and \eqref{eq:cons_prob1} are equivalent.} We also have that matrix $\textbf{W}$ has exactly one eigenvalue equal to 1 and the rest of eigenvalues have absolute values strictly less than 1. We use $\beta$, with $0<\beta<1$, to denote the second largest magnitude of the eigenvalues of $\textbf{W}$. For the rest of the paper, we focus on developing methods to solve problem \eqref{eq:cons_prob1}.

\subsection{Distributed Gradient Descent}
\label{sec:dgd}

\ew{We now review the basic Distributed Gradient Descent (DGD) method \cite{nedic2009distributed}, which is the building block for our later development of the DGD$^t$, \ndgdt and \ndgdp methods.}
The DGD method is a first-order method for solving problem \eqref{eq:cons_prob1}, where each agent updates its local estimate iteratively using a gradient based on local information and information exchanged with its neighbors in the network.  The $k^{th}$ iteration of the DGD method for any node $i$ can be expressed as
\begin{align*}
	x_{i,k+1} = \sum_{j \in \mathcal{N}_i \cup \{i\}}w_{ij} x_{j,k} - \alpha \nabla f_i(x_{i,k}), \qquad \forall i=1,\ldots,n,
\end{align*}
\ew{where $x_{i,k}$ represents the local estimate of agent $i$ at iteration $k$ and the positive scalar $\alpha$ denotes the stepsize.}
Effectively, the $i^{th}$ agent computes a weighted average of its and its neighbors local estimates, and takes a step in the negative gradient direction obtained using only local information.
Equivalently, in the concatenated notation, the DGD method can be expressed as
\begin{align}		\label{eq:dgd_vec}
	\textbf{x}_{k+1} = \textbf{Z}\textbf{x}_k - \alpha \nabla \textbf{f}(\textbf{x}_k)
\end{align}
where
\begin{align*}
\nabla \textbf{f}(\textbf{x}_k) = \begin{bmatrix}
   		  \nabla f(x_{1,k})  \\
   		  \nabla f(x_{2,k})  \\
    		   \vdots  \\
   		  \nabla f(x_{n,k})
	\end{bmatrix} \in \mathbb{R}^{np}
\end{align*}
 %is a concatenation of the local gradients computed at the $k^{th}$ iterate
 and the matrix $\textbf{Z}= \textbf{W}\otimes I_p \in \mathbb{R}^{np \times np}$.%, defined by \[\textbf{Z} = \textbf{W}\otimes I_p.\]

 % (where $\otimes$ is the Kroenecker product). %\ew{Maybe we can introduce Z in (3.1)?}

The DGD method can also be thought of as a gradient method with unit steplength on the following convex problem
\begin{align} \label{eq:dgd_pen}
\min_{\textbf{x} \in \mathbb{R}^{np}} \frac{1}{2}\textbf{x}^T({I- \textbf{Z}})\textbf{x} + \alpha\sum_{i=1}^{n}f_i(x_i).
\end{align}

The theoretical properties of the DGD method have been well established; see \cite{bertsekas1989parallel,nedic2009distributed,yuan2016convergence}. The convergence results are typically established under the following standard assumptions:
\begin{assum}\label{assm:Lip}
	 Each local objective function $f_i$  has $L_i$-Lipschitz continuous gradients.
\end{assum}
\begin{assum}\label{assm:Strong_h}
	{The objective function $h$ \eqref{eq:prob} is $\mu_h$-strongly convex.}
\end{assum}
\begin{assum}\label{assm:Strong}
	Each local objective function $f_i$ is $\mu_i$-strongly convex. {Note, this Assumption implies Assumption \ref{assm:Strong_h}.}
\end{assum}

%\todo{I added the existence of the solution, please check the first sentence.}
{These assumptions guarantee the existence of a unique optimal solution. Under Assumption \ref{assm:Lip}, the DGD method can be shown to converge  at a sublinear rate with diminishing stepsize (decrease stepsize $\alpha$ as the algorithm evolves). The diminishing stepsize is effectively shrinking the penalty parameter $\alpha$ of problem \eqref{eq:dgd_pen} and thus DGD recovers a feasible and optimal solution of problem \eqref{eq:cons_prob1} in the limit. If we further assume the conditions in Assumption \ref{assm:Strong}, then with an appropriate constant stepsize, DGD converges at a linear rate to the optimal solution of \eqref{eq:dgd_pen}, which is in a neighborhood of the optimal solution of \eqref{eq:cons_prob1} \cite{yuan2016convergence}. The limit point of DGD with constant stepsize is often infeasible for the equality constraint in problem \eqref{eq:cons_prob1}.}
%\todo[inline]{\ew{It would be good to actually state the assumptions on f's here, to show that we are not making any stronger assumptions than existing literature.}}

%\todo[inline]{\ew{I'd like to move the assumptions to the DGD section, but we can do this after arxiving.}}

\medskip
\noindent
\textbf{Notation:} For the rest of the paper, we follow the same notation as in this section. A boldface lower case letter indicates a concatenated vector, i.e.,  $\textbf{v}\in \mathbb{R}^{np}$ represents the
concatenation of local vectors $v_i \in \mathbb{R}^p$. Notation $\bar{v} \in \mathbb{R}^p$ denotes the average of all local vectors $v_i \in \mathbb{R}^p$, i.e., $\bar{v} = \frac{1}{n} \sum_{i=1}^n v_i$. The two subscripts $x_{i,k}$ indicate the agent index $i$ and iteration count $k$.

%%%%%%%%%%%%%%%%%%%%%%%%%%%%%%%%%%%%%%%%%%%%%%%%%%%%%%
%%%%%%%%%%%%%%%%%%%%%%%%%%%%%%%%%%%%%%%%%%%%%%%%%%%%%%
\section{DGD$^t$: A Distributed Gradient Descent Variant}
\label{sec:DGDt}

A close inspection of the DGD iterate update Eq.\ \eqref{eq:dgd_vec} reveals that the method performs a single round of communication and a single computation per iteration.
However, a natural question is whether this is optimal or even necessary. Restating this question from a different angle: is there flexibility in creating a whole class of methods based on the components of the DGD method that perform different number of communication and computation steps \ew{depending on the application}? Motivated by this question and our adaptive cost framework (Section \ref{sec:cost}), we decompose and rearrange the communication and computation steps of DGD to  construct more flexible algorithms. We present two improved classes of DGD-based algorithms in this and the following sections, which we call DGD$^t$ and \ndgd methods, respectively.  We also provide answers to the following questions: (i) what the interpretation of these new methods are, and (ii) what theoretical guarantees can be established. For simplicity, we will focus on the constant stepsize implementations.

\ew{ For the first class of algorithms, we consider scenarios in which communication is much cheaper than computation, as in the shared memory machine learning example. We note that a major drawback of the DGD algorithm is that to obtain a feasible solution we need to use diminishing stepsizes, which results in slow convergence speed. With constant stepsizes, the resulting solution of DGD is infeasible with respect to the equality constraint of problem \eqref{eq:cons_prob1}. In order to improve the solution quality without sacrificing convergence speed, we propose to perform $t$ consensus steps at each iteration, and} consider the following constant stepsize iterate update equation,
\begin{align}	\label{eq:dgdt_vec}
	\textbf{x}_{k+1} = \textbf{Z}^{t}\textbf{x}_k - \alpha \nabla \textbf{f}(\textbf{x}_k), \qquad \textbf{Z}^{{t}} = \textbf{W}^{{t}}\otimes I_p,
\end{align}
which we call the {\it DGD$^t$} method. The DGD$^t$ method can be thought of as a gradient method with unit steplength on the following convex problem
\begin{align} \label{eq:dgd_penalty}
\min_{\textbf{x} \in \mathbb{R}^{np}} p_f(\textbf{x}) = \frac{1}{2}\textbf{x}^T({I- \textbf{Z}^t})\textbf{x} + \alpha\sum_{i=1}^{n}f_i(x_i).
\end{align}

The intuition behind this method is that by increasing the number of consensus steps from $1$ to $t$ per iteration, the resulting solution should be closer to being feasible. Alternatively, we can view the DGD$^t$ method as a DGD method with a different underlying graph (different weights in $\textbf{W}$). \ew{ As we will show  next, the solution of DGD$^t$ is indeed closer to being feasible compared to standard DGD. This is achieved at the cost of more communication steps per iteration. Also, unlike DGD, where the gradient computation and consensus can happen simultaneously, the $t$ communication steps in DGD$^t$ have to happen sequentially. This method can be desirable when communication is cheap, i.e., $c_c$ is much smaller than $c_g$. }

\subsection{Convergence Analysis of DGD$^t$}

We now provide a complete convergence analysis for the DGD$^t$ method with constant stepsize. {We should note again that DGD$^t$ is a variant of DGD by replacing the weight matrix $W$ by $W^t$. As such, our analysis follows a similar approach as in \cite{yuan2016convergence}, and so for brevity we have omitted the proofs. The proofs can be found in \cite{balCC}.} 

For notational convenience, we introduce the following quantities that are used in the analysis
\begin{gather}		\label{eq:g_barg}
	\bar{x}_k = \frac{1}{n}\sum_{i=1}^n x_{i,k}, \quad g_k =  \frac{1}{n}\sum_{i=1}^n \nabla f_i(x_{i,k}),  \nonumber\\
	 \bar{g}_k =  \frac{1}{n}\sum_{i=1}^n \nabla f_i(\bar{x}_{k}).
\end{gather}
Vector  $\bar x_k \in \mathbb{R}^p$ corresponds to the average of local estimates, vector  $g_k \in\mathbb{R}^p$ represents the average of local gradients at the current local estimates and vector $\bar g_k \in\mathbb{R}^p$ indicates the average gradient at $\bar x_k$.

\begin{lem} 	\label{lem:dgdt_bound_grads}
\textbf{(Bounded gradients)} Suppose Assumption \ref{assm:Lip} holds, and let the steplength satisfy
\begin{align}	\label{eq:lem1_alpha}
	\alpha \leq {\frac{1+\lambda_n(\textbf{W}^{{t}})}{L}}
\end{align}
where ${\lambda_n(\textbf{W}^{{t}})}$ is the smallest eigenvalue of ${\textbf{W}^{{t}}}$ and $L = \max_{i}L_i$. Then, starting from $x_{i,0} = 0$ ($1\leq i \leq n$), the sequence $x_{i,k}$ generated by the DGD$^t$ method converges. In addition, we also have
\begin{align*}	%\label{eq:bound_grad}
	\| \nabla \textbf{f} (\textbf{x}_k) \| \leq D = \sqrt{2L \left( \sum_{i=1}^n \left(f_i(0) - f_i^\star \right)\right)}
\end{align*}
for all $k = 1,2,\ldots$, where $f_i^\star = f_i(x_i^\star)$ and $x_i^\star = \arg\min_x f_i(x)$.
\end{lem}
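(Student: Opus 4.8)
The plan is to recognize the DGD$^t$ recursion \eqref{eq:dgdt_vec} as ordinary gradient descent with unit steplength on the penalized objective $p_f$ of \eqref{eq:dgd_penalty}, and then exploit the descent property of gradient descent together with the structure of $p_f$ and the initialization at the origin.

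First I would verify that $\nabla p_f(\textbf{x}) = (I - \textbf{Z}^t)\textbf{x} + \alpha\nabla\textbf{f}(\textbf{x})$, so that the update $\textbf{x}_{k+1} = \textbf{x}_k - \nabla p_f(\textbf{x}_k)$ is exactly \eqref{eq:dgdt_vec}. Next I would bound the Lipschitz constant of $\nabla p_f$: since $\textbf{W}$ is symmetric with exactly one eigenvalue equal to $1$ and all others of absolute value strictly less than $1$, the eigenvalues of $\textbf{Z}^t = \textbf{W}^t \otimes I_p$ lie in $[\lambda_n(\textbf{W}^t), 1]$, whence $0 \preceq I - \textbf{Z}^t \preceq (1 - \lambda_n(\textbf{W}^t)) I$; combined with $\|\nabla\textbf{f}(\textbf{x}) - \nabla\textbf{f}(\textbf{y})\| \le L\|\textbf{x} - \textbf{y}\|$ (which follows from Assumption \ref{assm:Lip} with $L = \max_i L_i$), this shows $\nabla p_f$ is Lipschitz with constant at most $L_{p_f} := (1 - \lambda_n(\textbf{W}^t)) + \alpha L$. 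The steplength condition \eqref{eq:lem1_alpha} is precisely $\alpha L \le 1 + \lambda_n(\textbf{W}^t)$, i.e.\ $L_{p_f} \le 2$, so the standard descent lemma yields $p_f(\textbf{x}_{k+1}) \le p_f(\textbf{x}_k) - \bigl(1 - \tfrac{L_{p_f}}{2}\bigr)\|\nabla p_f(\textbf{x}_k)\|^2 \le p_f(\textbf{x}_k)$; hence $\{p_f(\textbf{x}_k)\}$ is non-increasing, and convergence of the iterates $x_{i,k}$ follows from the standard convergence theory for gradient descent applied to the convex, $L_{p_f}$-smooth function $p_f$, exactly as in \cite{yuan2016convergence}.

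For the uniform gradient bound I would argue as follows. Because $I - \textbf{Z}^t \succeq 0$, the quadratic term in $p_f$ is non-negative, so $\alpha\sum_{i=1}^n f_i(x_{i,k}) \le p_f(\textbf{x}_k)$. Using the monotonicity just established and the initialization $\textbf{x}_0 = 0$, at which the quadratic term vanishes, we get $p_f(\textbf{x}_k) \le p_f(\textbf{x}_0) = \alpha\sum_{i=1}^n f_i(0)$; dividing by $\alpha > 0$ gives $\sum_{i=1}^n f_i(x_{i,k}) \le \sum_{i=1}^n f_i(0)$ for every $k$. Finally, for each $L_i$-smooth $f_i$ attaining its minimum at $x_i^\star$, evaluating the descent inequality at $x - \tfrac{1}{L_i}\nabla f_i(x)$ and comparing with $f_i^\star$ gives $\|\nabla f_i(x)\|^2 \le 2 L_i\bigl(f_i(x) - f_i^\star\bigr)$; summing over $i$ and using $L_i \le L$,
\[
\|\nabla\textbf{f}(\textbf{x}_k)\|^2 = \sum_{i=1}^n \|\nabla f_i(x_{i,k})\|^2 \le 2L\sum_{i=1}^n \bigl(f_i(x_{i,k}) - f_i^\star\bigr) \le 2L\sum_{i=1}^n \bigl(f_i(0) - f_i^\star\bigr) = D^2,
\]
which is the claimed bound.

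I expect the main obstacle to be the Lipschitz-constant estimate for $\nabla p_f$, since that is where the steplength condition \eqref{eq:lem1_alpha} enters and where one must simultaneously control the graph term $I - \textbf{Z}^t$ (spectral norm $1 - \lambda_n(\textbf{W}^t)$) and the separable term $\alpha\nabla\textbf{f}$ (Lipschitz constant $\alpha L$), and observe that their sum is at most $2$ so that unit-steplength gradient descent is monotone. Everything after that --- non-negativity of the quadratic form, the initialization trick to bound $p_f(\textbf{x}_0)$, and the co-coercivity-type inequality for $\nabla f_i$ --- is routine. A minor point worth flagging is that convergence of the iterates (rather than merely of the values $p_f(\textbf{x}_k)$) uses that $p_f$ attains its minimum; this is not needed for the quantitative bound $D$, which only relies on the monotonicity of $p_f(\textbf{x}_k)$.
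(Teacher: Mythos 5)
Your proposal is correct and follows essentially the same route as the paper: interpret DGD$^t$ as unit-steplength gradient descent on $p_f$, bound $L_{p_f}\le (1-\lambda_n(\textbf{W}^t))+\alpha L\le 2$ via the steplength condition, use monotonicity of $p_f(\textbf{x}_k)$ with the zero initialization to bound $\sum_i f_i(x_{i,k})$, and finish with the co-coercivity inequality $\|\nabla f_i(x)\|^2\le 2L_i\bigl(f_i(x)-f_i^\star\bigr)$ (which the paper obtains by citing Nesterov's Theorem 2.1.5 rather than the descent-lemma derivation you sketch, a purely cosmetic difference).
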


\begin{proof} %\ab{See Appendix \ref{proof_lemma_4.1}.} 
Note that the DGD$^t$ iteration \eqref{eq:dgdt_vec} is equivalent to a gradient descent iteration, with unit steplength on the quadratic penalty function $p_f$ \eqref{eq:dgd_penalty}. We first show that the function $p_f$ has $ \left[\left(1 - \lambda_n(\textbf{W}^t) \right) + \alpha L \right]$-Lipschitz continuous gradients. By definition of $p_f$ and the triangle inequality, we have
\begin{align} \label{ineq:Lips}
\norm{\nabla p_f(u)- \nabla p_f(v)} &\leq \norm{(I-\textbf{Z}^t)(u-v)} \nonumber\\
& \quad + \alpha \norm{\sum_{i=1}^{n}\nabla f_i(u_i) - \sum_{i=1}^{n}\nabla f_i(v_i)}.
\end{align}
By the Cauchy-Schwartz inequality, the first term satisfies
\begin{align*}
\norm{(I-\textbf{Z}^t)(u-v)}&\leq \norm{I-\textbf{Z}^t}\norm{u-v} \\
& = \left(1-\lambda_n(\textbf{W}^t)\right)\norm{u-v},
\end{align*}
where the last inequality follows from the fact that all eigenvalues of the matrix $\textbf{W}$ lie in the interval $(-1, 1]$. The second term in Eq.\ \eqref{ineq:Lips} satisfies
\[ \alpha \norm{\sum_{i=1}^{n}\nabla f_i(u_i) - \sum_{i=1}^{n}\nabla f_i(v_i)}\leq \alpha L \norm{u-v},\] due to Assumption \ref{assm:Lip}. Thus, we have that the function $p_f$ has Lipschitz continuous gradients with
\begin{align*}
	L_{p_f} \leq \left(1 - \lambda_n(\textbf{W}^t) \right) + \alpha L.
\end{align*}
%%since $1<\lambda_n(\textbf{W}^t)<-1$.
From the classical analysis of gradient descent \cite{bertsekas1989parallel}, we know that these iterates will converge with unit stepsize if $1 \leq \frac{2}{L_{p_f}}$, where $L_{p_f}$ is the Lipschitz constant of the gradients of $p_f$.
Since $\alpha \leq {\frac{1+\lambda_n(\textbf{W}^{{t}})}{L}}$ from Eq.\ \eqref{eq:lem1_alpha}, we have
\begin{align*}
 L_{p_f} &\leq \left(1 - \lambda_n(\textbf{W}^t) \right) + \alpha L\\
 &\leq \left(1 - \lambda_n(\textbf{W}^t) \right) + {\frac{1+\lambda_n(\textbf{W}^{{t}})}{L}} L\leq 2.
 \end{align*}
Hence when the condition in Eq.\ \eqref{eq:lem1_alpha} is satisfied, the iterates $\textbf{x}_k$ will converge which implies that the individual iterates $x_{i,k}$ converge.

We now show the bound on the gradients. Since the function values obtained in the gradient descent method are non-increasing and $I - \textbf{W}^t$ is a positive semi-definite matrix, we have,
\begin{align}	\label{eq:l1_1}
	 \sum_{i=1}^{n} f_i(x_{i,k}) \leq \frac{1}{\alpha} p_f(\textbf{x}_k) \leq \frac{1}{\alpha} p_f(\textbf{x}_{k-1}) \leq \dots \nonumber \\
	 \dots \leq \frac{1}{\alpha}p_f(\textbf{x}_{0}) = \sum_{i=1}^{n} f_i(0).
\end{align}

By Theorem 2.1.5 in \cite{nesterov2013introductory}, any convex function $\phi$ with $L-$Lipschitz gradient satisfies
\[\phi(x)+ \nabla \phi (x)^T(y-x)+\frac {1}{2L}\norm{\nabla \phi(x) - \nabla \phi(y)}^2\leq \phi (y)\]
for all $x, y$ in its domain. We apply this relation to each of $f_i$ at respective $x_i^\star$, and have
\begin{align}	\label{eq:l1_2}
f_i(x_i^\star)+ \frac {1}{2L_i}\norm{\nabla f_i(x)}^2\leq f_i (x)
\end{align}
for all $x$ in $\mathbb{R}^p$.

Finally, we can bound $	\| \nabla \textbf{f}(\textbf{x}_k) \|^2$ by % using Lipschitz continuity of the local objective functions and the definition of $L$, we complete the proof,
 \begin{align*}
 	\| \nabla \textbf{f}(\textbf{x}_k) \|^2 = \sum_{i=1}^{n}\|\nabla f_i(x_{i,k})\|^2 &\leq \sum_{i=1}^{n} 2L_i \left(f_i(x_{i,k}) - f^\star_i \right) \\
	&\leq 2L\left( \sum_{i=1}^{n}\left(f_i(0) - f^\star_i \right)\right).
 \end{align*}
where the first inequality follows from Eq. \eqref{eq:l1_2} and the second inequality uses the definition of $L$ and Eq. \eqref{eq:l1_1}.
\end{proof}

Lemma \ref{lem:dgdt_bound_grads} shows that the iterates produced by the DGD$^t$ method converge and have bounded gradients. A different bound can be shown if $x_{i,0} \neq 0$ for all $i$.  For convenience, we assume that $x_{i,0} = 0$, for all $i$ for the rest of this section.

\begin{lem} 	\label{lem:dgdt_bound_dev_mean}
\textbf{(Bounded deviation from mean)} If Assumptions \ref{assm:Lip}-\ref{assm:Strong_h} hold. Then, starting from $x_{i,0} = 0$, the total deviation from the mean is bounded, namely,
\begin{align*}
	{\| x_{i,k} - \bar{x}_k\| \leq \frac{\alpha D}{1 - \beta^{{t}}}},
\end{align*}
and
\begin{gather}
	{\| \nabla f_i(x_{i,k}) - \nabla f_i(\bar{x}_k)\| \leq \frac{\alpha D L_i}{1 - \beta^{{t}}}} , \label{eq:lem2_p2}\\
	{\| g_k - \bar{g}_k\| \leq \frac{\alpha D L}{1 - \beta^{{t}}}}, \label{eq:lem2_p3}
\end{gather}
for all $k=1,2,\ldots$ and $1 \leq i \leq n$.
\end{lem}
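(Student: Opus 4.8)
The plan is to follow the standard perturbed-averaging argument for distributed gradient methods, specialized to the $t$-step consensus operator $\textbf{Z}^{{t}}$, and to invoke the gradient bound of Lemma~\ref{lem:dgdt_bound_grads}. Let $\mathbf{1}\in\mathbb{R}^n$ denote the all-ones vector. First I would note that since $\textbf{W}$ is doubly stochastic, $\mathbf{1}^T\textbf{W}^{{t}}=\mathbf{1}^T$, so averaging the DGD$^t$ recursion \eqref{eq:dgdt_vec} across the agents gives the clean update $\bar{x}_{k+1}=\bar{x}_k-\alpha g_k$. Subtracting the concatenated copy $\mathbf{1}\otimes\bar{x}_{k+1}$ from \eqref{eq:dgdt_vec} and using $\textbf{W}^{{t}}\mathbf{1}=\mathbf{1}$ to write $\textbf{Z}^{{t}}\textbf{x}_k-\mathbf{1}\otimes\bar{x}_k=\textbf{Z}^{{t}}(\textbf{x}_k-\mathbf{1}\otimes\bar{x}_k)$, I obtain
\[
\textbf{x}_{k+1}-\mathbf{1}\otimes\bar{x}_{k+1}=\textbf{Z}^{{t}}\big(\textbf{x}_k-\mathbf{1}\otimes\bar{x}_k\big)-\alpha\big(\nabla\textbf{f}(\textbf{x}_k)-\mathbf{1}\otimes g_k\big).
\]

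Next I would take norms and bound the two right-hand terms separately. The key observation is that $\textbf{x}_k-\mathbf{1}\otimes\bar{x}_k$ lies in the subspace orthogonal to $\mathbf{1}\otimes\mathbb{R}^p$ (its blocks sum to zero), this subspace is invariant under $\textbf{Z}^{{t}}$, and since $\textbf{W}$ is symmetric with simple eigenvalue $1$ (eigenvector $\mathbf{1}$) and second-largest eigenvalue magnitude $\beta$, the operator norm of $\textbf{Z}^{{t}}$ restricted to this subspace equals $\max_{i\geq 2}|\lambda_i|^{{t}}=\beta^{{t}}$; hence $\norm{\textbf{Z}^{{t}}(\textbf{x}_k-\mathbf{1}\otimes\bar{x}_k)}\leq\beta^{{t}}\norm{\textbf{x}_k-\mathbf{1}\otimes\bar{x}_k}$. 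For the gradient term, $\nabla\textbf{f}(\textbf{x}_k)-\mathbf{1}\otimes g_k$ is the image of $\nabla\textbf{f}(\textbf{x}_k)$ under the (non-expansive) orthogonal projection onto the same subspace, so its norm is at most $\norm{\nabla\textbf{f}(\textbf{x}_k)}\leq D$ by Lemma~\ref{lem:dgdt_bound_grads}. Writing $r_k:=\norm{\textbf{x}_k-\mathbf{1}\otimes\bar{x}_k}$, this gives the scalar recursion $r_{k+1}\leq\beta^{{t}}r_k+\alpha D$.

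Since $x_{i,0}=0$ for all $i$ we have $r_0=0$, so unrolling and summing the geometric series yields $r_k\leq\alpha D\sum_{j=0}^{k-1}\beta^{j{t}}\leq\frac{\alpha D}{1-\beta^{{t}}}$ for all $k$; as $\norm{x_{i,k}-\bar{x}_k}\leq r_k$, the first claimed bound follows. The remaining inequalities are then immediate: \eqref{eq:lem2_p2} follows by applying the $L_i$-Lipschitz continuity of $\nabla f_i$ (Assumption~\ref{assm:Lip}) to $\norm{x_{i,k}-\bar{x}_k}$, and \eqref{eq:lem2_p3} follows from $g_k-\bar{g}_k=\frac{1}{n}\sum_{i=1}^n\big(\nabla f_i(x_{i,k})-\nabla f_i(\bar{x}_k)\big)$ together with the triangle inequality, the per-agent Lipschitz bounds, and $L=\max_i L_i$. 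Throughout I assume the stepsize satisfies \eqref{eq:lem1_alpha} so that the constant $D$ from Lemma~\ref{lem:dgdt_bound_grads} is available.

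I expect the spectral-contraction step to be the only real subtlety: one must justify that $\beta^{{t}}$ — rather than merely some constant strictly less than $1$ — is the correct contraction factor, which relies on the symmetry of $\textbf{W}$ (so $\textbf{W}^{{t}}$ is normal and its operator norm on an invariant subspace coincides with its spectral radius there) and on $1$ being a simple eigenvalue of $\textbf{W}$. Everything else is bookkeeping with the triangle inequality, the Lipschitz assumption, and the geometric series.
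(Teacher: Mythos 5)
Your proposal is correct and follows essentially the same argument as the paper: both rest on the spectral contraction factor $\beta^{t}$ of $\textbf{W}^{t}$ on the disagreement subspace, the uniform gradient bound $D$ from Lemma~\ref{lem:dgdt_bound_grads}, and summing the resulting geometric series, with the final two inequalities obtained from Lipschitz continuity exactly as in the paper. The only (cosmetic) difference is that you set up a one-step recursion $r_{k+1}\leq \beta^{t}r_k+\alpha D$ for the disagreement norm, whereas the paper unrolls the iterate in closed form from $\textbf{x}_0=0$ and bounds $\|\textbf{W}^{t(k-1-s)}-\tfrac{1}{n}1_n1_n^T\|=\beta^{t(k-1-s)}$ term by term; unrolling your recursion reproduces precisely the paper's geometric sum.
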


\begin{proof} %\ab{See Appendix \ref{proof_lemma_4.2}.} 
By iteratively applying the DGD$^t$ iteration \eqref{eq:dgdt_vec} and the definition of $\textbf{x}$, we obtain
\begin{align*}		%\label{eq:dgd_rec}
	\textbf{x}_k = -\alpha \sum_{s=0}^{k-1}\left(\textbf{W}^{t(k-1-s)} \otimes I \right) \nabla \textbf{f}(x_s).
\end{align*}
Let $\bar{\textbf{x}}_k = [\bar{x}_k;\bar{x}_k;\ldots;\bar{x}_k] \in \mathbb{R}^{np}$, it follows that
\begin{align*}
	\bar{\textbf{x}}_k = \frac{1}{n} \left((1_n 1_n^T ) \otimes I \right) \textbf{x}_k.
\end{align*}
As a result,
\begin{align*}
	&\|x_{i,k} - \bar{x}_k\| \leq \|\textbf{x}_k - \bar{\textbf{x}}_k\| \\
	&=\left\|\textbf{x}_k - \frac{1}{n}\left((1_n 1_n^T) \otimes I\right) \textbf{x}_k\right\| \\
	&= \left\| -\alpha \sum_{s=0}^{k-1}\left(\textbf{W}^{t(k-1-s)}\otimes I\right) \nabla \textbf{f}(x_s)  \right.\\&\qquad+\left. \alpha \sum_{s=0}^{k-1} \frac{1}{n}\left((1_n 1_n^T\textbf{W}^{t(k-1-s)}) \otimes I \right) \nabla \textbf{f}(x_s)\right\| \\
	&= \left\| -\alpha \sum_{s=0}^{k-1}\left(\textbf{W}^{t(k-1-s)}\otimes I \right) \nabla \textbf{f}(x_s)  \right.\\
	&\qquad+\left. \alpha \sum_{s=0}^{k-1} \frac{1}{n}\left((1_n 1_n^T) \otimes I \right) \nabla \textbf{f}(x_s)\right\|, \\
\end{align*}
where the third equality holds since $\textbf{W}^t$ is doubly-stochastic, which is a direct consequence of $\textbf{W}$ being doubly-stochastic. Thus we have, 
\begin{align*}
	\|x_{i,k} - \bar{x}_k\| &\leq \left\| -\alpha \sum_{s=0}^{k-1}\left(\textbf{W}^{t(k-1-s)}\otimes I \right) \nabla \textbf{f}(x_s)  \right.\\
	&\qquad  \qquad+\left. \alpha \sum_{s=0}^{k-1} \frac{1}{n}\left((1_n 1_n^T) \otimes I \right) \nabla \textbf{f}(x_s)\right\| \\
	&=\alpha \left\| \sum_{s=0}^{k-1}\left( (\textbf{W}^{t(k-1-s)} - \frac{1}{n}1_n 1_n^T ) \otimes I \right) \nabla \textbf{f}(x_s)\right\| \\
	&\leq \alpha \sum_{s=0}^{k-1} \left\|\textbf{W}^{t(k-1-s)} - \frac{1}{n}1_n1_n^T\right\|\|\nabla \textbf{f}(x_s)\| \\
	&= \alpha \sum_{s=0}^{k-1} \beta^{t(k-1-s)}\|\nabla \textbf{f}(x_s)\|,
\end{align*}
where the inequality is due to Cauchy-Schwartz, and the last equality follows from the definition of $\beta$, since the matrix $\frac{1}{n}1_n1_n^T$ is the projection of $\textbf{W}$ onto the eigenspace associated with the eigenvalue equal to $1$. Using Lemma \ref{lem:dgdt_bound_grads} and the fact that $\beta<1$, it follows that
\begin{align}	\label{eq:lem2_p1}
	\|x_{i,k} - \bar{x}_k\| &\leq \alpha \sum_{s=0}^{k-1} \beta^{t(k-1-s)}\|\nabla \textbf{f}(x_s)\| \nonumber\\
	&\leq \alpha D \sum_{s=0}^{k-1} \beta^{t(k-1-s)} \leq \frac{\alpha D}{1 - \beta^t}.
\end{align}
%
%%\todo[inline]{The first step seems to be a bit loose, shall we present the theorem in terms of $\|\textbf{x}_k - \bar{\textbf{x}}_k\|$ instead? Will this strengthen anything? \textcolor{blue}{We agree, the first inequality in the big block on equations above is loose. However we need the result $\|x_{i,k}-\bar{x}\|$ for theorems that we prove later.}}

The result \eqref{eq:lem2_p2} is a direct consequence of \eqref{eq:lem2_p1} and the Lipschitz continuity of the individual gradients (Assumption \ref{assm:Lip}). For the second result \eqref{eq:lem2_p3}, we have
\begin{align*}
	\|g_k - \bar{g}_k\|  &= \left\|\frac{1}{n} \sum_{i=1}^{n} \left(\nabla f_i(x_{i,k}) -  \nabla f_i(\bar{x}_k)\right) \right\| \\
&\leq \frac{1}{n} \sum_{i=1}^{n} L_i \|x_{i,k} - \bar{x}_k\| \leq \frac{\alpha D L}{1 - \beta^t}.
\end{align*}
\end{proof}

Lemma \ref{lem:dgdt_bound_dev_mean} shows that the distance between the local iterates and the average is bounded. As a consequence of this, the deviation in the gradients is also bounded.

We now look at the optimization error. We observe that due to the doubly-stochastic nature of $\textbf{W}$,
\begin{align*}
	\bar{\textbf{x}}_{k+1} &= \frac{1}{n}\left((1_n1_n^T) \otimes I \right){\textbf{x}}_{k+1}\\
	& = \frac{1}{n}\left((1_n1_n^T) \otimes I \right)\left((\textbf{W}^{{t}} \otimes I)\textbf{x}_{k} - \alpha \nabla \textbf{f}(\textbf{x}_k) \right)\\
	& = \frac{1}{n}\left((1_n1_n^T\textbf{W}^{{t}}) \otimes I \right)\textbf{x}_{k} -  \frac{\alpha}{n} \left((1_n1_n^T) \otimes I \right) \nabla \textbf{f}(\textbf{x}_k)\\
	& = [\bar{x}_k - \alpha g_k; \bar{x}_k - \alpha g_k;\ldots;\bar{x}_k - \alpha g_k].
\end{align*}
%\todo[inline]{ \textcolor{blue}{RB: Albert, didn't we agree to remove this and write the below equality directly?}}
Thus we have
\begin{align}	\label{eq:dgdt_errors}
	\bar{{x}}_{k+1} = \bar{{x}}_k - \alpha g_k.
\end{align}
Recall that $g_k$ is the average of gradients at the current local estimates [cf.\ Eq.\ \eqref{eq:g_barg}] and thus the above equation can be viewed as an inexact gradient descent step for the problem
\begin{align}		\label{eq:prob_bar}
	\min_{x\in \mathbb{R}^p} \bar{f}(x) = \frac{1}{n} \sum_{i=1}^n f_i (x),
\end{align}
where $\bar{g}_k$ is the exact gradient (at the average of the local estimates).
Consequently, if $h$ has $L_h$-Lipschitz continuous gradients, and is $\mu_h$-strongly convex, then it can be shown that the function $\bar{f}$ has $L_{\bar{f}}$-Lipschitz continuous gradients and is $\mu_{\bar{f}}$ strongly convex with
\begin{align*}
	L_{\bar{f}} = \frac{1}{n} \sum_{i=1}^n L_i = \frac{1}{n} L_h, \quad
	\mu_{\bar{f}}  = \frac{1}{n} \sum_{i=1}^n \mu_i = \frac{1}{n} \mu_h.
\end{align*}
Based on the above observations, we bound the distance to the optimal solution.

\begin{thm}		\label{thm:dgdt_bound_dist_min}
\textbf{(Bounded distance to optimal solution)} Suppose Assumptions \ref{assm:Lip}-\ref{assm:Strong_h} hold, and let the steplength satisfy
\begin{align*}
	\alpha \leq \min \left \{ {\frac{1+\lambda_n(\textbf{W}^{{t}})}{{L}}, c_4} \right \}
\end{align*}
where ${\lambda_n(\textbf{W}^{{t}})}$ is the smallest eigenvalue of ${\textbf{W}^{{t}}}$, $L = \max_{i}L_i$ and $c_4 = \frac{{2}}{\mu_{\bar{f}} +L_{\bar{f}}}$. Then, starting from $x_{i,0} = 0$ ($1\leq i \leq n$), for all $k=0,1,2,\ldots$
\begin{align*}
	\| \bar{x}_{k+1} - x^\star\|^2 \leq c_1^2 \| \bar{x}_{k} - x^\star \|^2 + \frac{{{c_3^2}}}{{{(1-\beta^{{t}})^2}}},
\end{align*}
where
\begin{align*}
	c_1^2 = 1 - \alpha c_2 + \alpha \delta - \alpha^2 \delta c_2, \;\; c_2 = \frac{{2}\mu_{\bar{f}} L_{\bar{f}}}{\mu_{\bar{f}} + L_{\bar{f}}},\\
	c_3^2 = \alpha^3(\alpha + \delta^{-1})L^2 D^2, \;\; D = \sqrt{2L \left(\sum_{i=1}^n f_i(0) - f^\star \right)},
\end{align*}
 $x^\star$ is the optimal solution of \eqref{eq:cons_prob1} and $\delta>0$. In particular, if we set $\delta = \frac{c_2}{2(1-\alpha c_2)}$ such that $c_1 = \sqrt{1 - \frac{\alpha c_2}{2}} \in (0,1)$, then for $k=0,1,2,\ldots$
 \begin{align*}
 	\| \bar{x}_{k} - x^\star\| \leq c_1^k \| \bar{x}_{0} - x^\star\| + \mathcal{O}\left( \frac{\alpha}{1-\beta^t} \right).
 \end{align*}
\end{thm}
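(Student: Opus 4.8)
The plan is to exploit the identity \eqref{eq:dgdt_errors}, $\bar{x}_{k+1} = \bar{x}_k - \alpha g_k$, which displays the averaged iterate as an \emph{inexact} gradient step for the reduced problem \eqref{eq:prob_bar} with objective $\bar f$, whose exact gradient at $\bar x_k$ is $\bar g_k = \nabla\bar f(\bar x_k)$. I would first write $\bar{x}_{k+1} - x^\star = (\bar{x}_k - x^\star - \alpha\bar{g}_k) - \alpha(g_k - \bar{g}_k)$, expand $\|\bar{x}_{k+1} - x^\star\|^2$, and bound the exact-step term, the cross term, and the perturbation term separately. The relevant properties of $\bar f$ (namely $\mu_{\bar f} = \mu_h/n$, $L_{\bar f} = L_h/n$) and the scalar $D$ were already recorded above.

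For the exact-step term I would use the standard contraction estimate for gradient descent on a $\mu_{\bar f}$-strongly convex function with $L_{\bar f}$-Lipschitz gradient: combining co-coercivity with strong convexity (see \cite{nesterov2013introductory}) gives, for $\alpha \le c_4 = \frac{2}{\mu_{\bar f}+L_{\bar f}}$,
\[
	\left\|\bar{x}_k - x^\star - \alpha\bar{g}_k\right\|^2 \le \left(1 - \alpha c_2\right)\left\|\bar{x}_k - x^\star\right\|^2, \qquad c_2 = \frac{2\mu_{\bar f}L_{\bar f}}{\mu_{\bar f}+L_{\bar f}}.
\]
For the cross term I would apply Cauchy--Schwarz and then Young's inequality with a free parameter $\delta > 0$ to obtain $-2\alpha\langle \bar{x}_k - x^\star - \alpha\bar{g}_k,\, g_k - \bar{g}_k\rangle \le \alpha\delta\|\bar{x}_k - x^\star - \alpha\bar{g}_k\|^2 + \frac{\alpha}{\delta}\|g_k - \bar{g}_k\|^2$. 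Adding the three contributions yields
\[
	\left\|\bar{x}_{k+1} - x^\star\right\|^2 \le (1+\alpha\delta)(1-\alpha c_2)\left\|\bar{x}_k - x^\star\right\|^2 + \alpha\left(\alpha + \tfrac{1}{\delta}\right)\left\|g_k - \bar{g}_k\right\|^2,
\]
whose leading coefficient is exactly $c_1^2 = 1 - \alpha c_2 + \alpha\delta - \alpha^2\delta c_2$; invoking the deviation bound \eqref{eq:lem2_p3} of Lemma \ref{lem:dgdt_bound_dev_mean}, $\|g_k - \bar{g}_k\| \le \frac{\alpha D L}{1-\beta^t}$, turns the last term into $\frac{\alpha^3(\alpha + \delta^{-1})L^2D^2}{(1-\beta^t)^2} = \frac{c_3^2}{(1-\beta^t)^2}$. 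This establishes the first displayed inequality.

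For the refined statement I would set $\delta = \frac{c_2}{2(1-\alpha c_2)}$, giving $c_1^2 = 1 - \frac{\alpha c_2}{2}$; one must check $c_1 \in (0,1)$, which holds since AM--GM gives $c_2 \le \frac{\mu_{\bar f}+L_{\bar f}}{2}$ and hence $\alpha c_2 \le 1$ under $\alpha \le c_4$ (so also $\delta > 0$). Unrolling the recursion and summing the geometric series gives
\[
	\left\|\bar{x}_k - x^\star\right\|^2 \le c_1^{2k}\left\|\bar{x}_0 - x^\star\right\|^2 + \frac{c_3^2}{(1-\beta^t)^2}\cdot\frac{1}{1-c_1^2} = c_1^{2k}\left\|\bar{x}_0 - x^\star\right\|^2 + \frac{2\,c_3^2}{\alpha c_2(1-\beta^t)^2};
\]
using $\delta^{-1} = \frac{2(1-\alpha c_2)}{c_2}$ one gets $\alpha + \delta^{-1} = \frac{2-\alpha c_2}{c_2} \le \frac{2}{c_2}$, hence $c_3^2 \le \frac{2\alpha^3L^2D^2}{c_2}$ and the additive term is $O(\alpha^2/(1-\beta^t)^2)$. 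Taking square roots and using $\sqrt{a+b} \le \sqrt{a}+\sqrt{b}$ yields $\|\bar{x}_k - x^\star\| \le c_1^k\|\bar{x}_0 - x^\star\| + O\!\left(\frac{\alpha}{1-\beta^t}\right)$, as claimed. The main obstacle is purely constant-tracking: getting the cross term to land exactly on the stated $c_1^2$ and $c_3^2$, and confirming $\alpha c_2 \le 1$ (equivalently $\delta>0$, $c_1<1$) from the steplength hypotheses; the remainder is the routine inexact-gradient-descent recursion combined with Lemmas \ref{lem:dgdt_bound_grads} and \ref{lem:dgdt_bound_dev_mean}.
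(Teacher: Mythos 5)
Your proposal is correct and follows essentially the same route as the paper's proof: the same decomposition of $\bar{x}_{k+1}-x^\star$ into an exact gradient step plus the perturbation $\alpha(g_k-\bar g_k)$, the same Nesterov contraction bound $(1-\alpha c_2)$ on the exact step, the same Young's-inequality treatment of the cross term with free parameter $\delta$, the same invocation of the deviation bound \eqref{eq:lem2_p3}, and the same unrolling of the recursion with $\delta=\tfrac{c_2}{2(1-\alpha c_2)}$. The constant-tracking you flag as the only obstacle indeed works out exactly as you indicate.
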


\begin{proof} %\ab{See Appendix \ref{proof_thm_4.3}.} 
Using the definitions of the $\bar{x}_k$, $g_k$ and \eqref{eq:dgdt_errors}, we have
\begin{align}		
	\| \bar{x}_{k+1} - x^\star \|^2 &= \| \bar{x}_{k} - x^\star - \alpha g_k\|^2 \nonumber\\
		& = \| \bar{x}_{k} - x^\star - \alpha \bar{g}_k + \alpha (\bar{g}_k - g_k)\|^2 \nonumber\\
		& = \| \bar{x}_{k} - x^\star - \alpha \bar{g}_k \|^2 + \alpha^2 \| \bar{g}_k - {g}_k \|^2 \nonumber\\
		& \quad+ 2\alpha (\bar{g}_k - g_k)^T(\bar{x}_{k} - x^\star - \alpha \bar{g}_k)\nonumber\\
		& \leq (1 + \alpha \delta)\| \bar{x}_{k} - x^\star - \alpha \bar{g}_k\|^2 \nonumber\\
		& \quad+ \alpha(\alpha + \delta^{-1})\| \bar{g}_k - {g}_k \|^2, \label{eq:dgdt_thm1}
\end{align}
where the last inequality follows from the fact that for any vectors $a$ and $b$, $\pm 2a^Tb \leq \delta^{-1} \| a\|^2 + \delta \| b \|^2$, for $\delta>0$.

We now bound the quantity $\| \bar{x}_{k} - x^\star - \alpha \bar{g}_k\|^2$,
\begin{align}		\label{eq:dgdt_thm2}
	\| &\bar{x}_{k} - x^\star - \alpha \bar{g}_k\|^2 \nonumber \\ &= \|  \bar{x}_{k} - x^\star \|^2 + \alpha^2 \|  \bar{g}_k\|^2 - 2(\bar{x}_{k} - x^\star)^T(\alpha \bar{g}_k) \nonumber \\
%	& \leq \|  \bar{x}_{k} - x^\star \|^2 + \alpha^2 \| \bar{g}_k\|^2 - \alpha c_4 \| \bar{g}_k\|^2 - \alpha c_4^{-1} \| \bar{x}_{k} - x^\star \|^2 \nonumber \\
	& \leq \|  \bar{x}_{k} - x^\star \|^2 + \alpha^2 \| \bar{g}_k\|^2 - \alpha c_4 \| \bar{g}_k\|^2 - \alpha c_2 \| \bar{x}_{k} - x^\star \|^2 \nonumber \\
	& = (1 - \alpha c_2)\|  \bar{x}_{k} - x^\star \|^2 + \alpha(\alpha -  c_4)\| \bar{g}_k\|^2 \nonumber\\
	& \leq (1 - \alpha c_2)\|  \bar{x}_{k} - x^\star \|^2,
\end{align}
where the first inequality follows from \cite[Theorem 2.1.12, Chapter 2]{nesterov2013introductory}, and in the last inequality we dropped the term $\alpha(\alpha -  c_4)\| \bar{g}_k\|^2$, since $\alpha\leq c_4$ and $\alpha(\alpha -  c_4)\| \bar{g}_k\|^2 \leq 0$.
%where the first  inequality follows from the fact that for any vectors $a$ and $b$, $\pm 2a^Tb \leq c_4^{-1} \| a\|^2 + c_4 \| b \|^2$,  ($c_4>0$), the second inequality is due to the fact that for any $\mu_{\bar{f}} , L_{\bar{f}} > 0$, $c_2 c_4 = \frac{\mu_{\bar{f}} L_{\bar{f}}}{(\mu_{\bar{f}} + L_{\bar{f}})^2} < 1$, and thus $c_4 < \frac{1}{c_2}$,
%and, in the last inequality we dropped the term $\alpha(\alpha -  c_4)\| \bar{g}_k\|^2$, since $\alpha\leq c_4$ and $\alpha(\alpha -  c_4)\| \bar{g}_k\|^2 \leq 0$.
%%\todo[inline]{please explain the first inequality. \textcolor{blue}{Yes. We changed. Please take a look at it.}}
%
By combining \eqref{eq:dgdt_thm1} and \eqref{eq:dgdt_thm2}, and using \eqref{eq:lem2_p3}, we obtain
\begin{align}		\label{eq:dgdt_onestep}	
	\| \bar{x}_{k+1} - x^\star \|^2 & \leq (1 + \alpha \delta)(1 - \alpha c_2)\|  \bar{x}_{k} - x^\star \|^2 \nonumber\\
	& \quad+ \alpha^3(\alpha + \delta^{-1}) \frac{ L^2 D^2 }{(1 - \beta^{{t}})^2}.
\end{align}

%%Note that since $f$ is strongly convex, $c_2 c_4 = \frac{\mu_{\bar{f}} L_{\bar{f}}}{(\mu_{\bar{f}} + L_{\bar{f}})^2} < 1$, and thus $c_4 < \frac{1}{c_2}$.
%%\todo[inline]{not sure what the connection here is with strong convexity. \textcolor{blue}{Yes. We moved the inequality to before paragraph. Please look at it.}}
Combining  $c_4 < \frac{1}{c_2}$, and $\alpha\leq c_4$, we have $(1 + \alpha \delta)(1 - \alpha c_2)>0$. Now, using the definitions of $c_1$ and $c_3$, and by recursive application of \eqref{eq:dgdt_onestep}, we have
%% This combined with the condition that $\alpha\leq c_4$, we have $(1 + \alpha \delta)(1 - \alpha c_2)>0$. 
Now, using the definitions of $c_1$ and $c_3$, and by recursive application of \eqref{eq:dgdt_onestep}, we have
\begin{align*}
	\| \bar{x}_{k} - x^\star \|^2 &\leq c_1^{2k} \| \bar{x}_{0} - x^\star\|^2 + \frac{c_3^2}{(1-c_1^2)(1-\beta^{{t}})^2},
\end{align*}
and so
\begin{align*}
	\| \bar{x}_{k} - x^\star \| &\leq c_1^{k} \| \bar{x}_{0} - x^\star\| + \frac{c_3}{\sqrt{1-c_1^2}(1-\beta^{{t}})}.
\end{align*}
If $\delta = \frac{c_2}{2(1-\alpha c_2)}>0$, then
\begin{align*}
	c_1^2 = 1 - \frac{\alpha c_2}{2}<1
\end{align*}
and
\begin{align*}
	\frac{c_3}{\sqrt{1-c_1^2}(1-\beta^{{t}})} = \mathcal{O}\left( \frac{\alpha}{1-\beta^t} \right).
\end{align*}
which completes the proof.
\end{proof}

Theorem \ref{thm:dgdt_bound_dist_min} shows that the average of the iterates generated by the DGD$^t$ method converges to a neighborhood of the solution that is defined by the steplength, the second largest eigenvalue of $\textbf{W}$ and the number of consensus steps.

\begin{cor}		\label{cor:dgdt_bound_dist_min}
\textbf{(Local agent convergence)} Suppose Assumptions \ref{assm:Lip}-\ref{assm:Strong_h} hold, and let the steplength satisfy
\begin{align*}
		\alpha \leq \min \left \{ {\frac{1+\lambda_n(\textbf{W}^{{t}})}{L}, c_4} \right \}.
\end{align*}
where $L = \max_{i}L_i$ and $c_4 = \frac{{2}}{\mu_{\bar{f}} +L_{\bar{f}}}$. Then, starting from $x_{i,0} = 0$ ($1\leq i \leq n$), for $k=0,1,2,\ldots$
\begin{align*}
	\| x_{i,k} - x^\star \| \leq c_1^k \| x^\star\| + \frac{c_3}{\sqrt{1-c_1^2}(1-\beta^t)} + \frac{\alpha D}{1-\beta^t}.%,
\end{align*}
%where
%\begin{align*}
%	c_1^2 = 1 - \alpha c_2 + \alpha \delta - \alpha^2 \delta c_2, \;\; c_2 = \frac{\textcolor{red}{2}\mu_{\bar{f}} L_{\bar{f}}}{\mu_{\bar{f}} + L_{\bar{f}}},\\
%	c_3^2 = \alpha^3(\alpha + \delta^{-1})L^2 D^2, \;\; D = \sqrt{2L \left(\sum_{i=1}^n f_i(0) - f^\star \right)}.
%\end{align*}
\end{cor}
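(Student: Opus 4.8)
The plan is to bound the local error $\|x_{i,k}-x^\star\|$ by a single application of the triangle inequality, splitting it into a consensus (deviation-from-mean) term and an optimization term:
\begin{align*}
\|x_{i,k}-x^\star\| \le \|x_{i,k}-\bar x_k\| + \|\bar x_k - x^\star\|,
\end{align*}
and then invoking Lemma \ref{lem:dgdt_bound_dev_mean} for the first term and Theorem \ref{thm:dgdt_bound_dist_min} for the second. First I would note that the stepsize condition stated in the corollary is exactly the one in Theorem \ref{thm:dgdt_bound_dist_min}, and since it requires $\alpha \le (1+\lambda_n(\textbf{W}^t))/L$ it is at least as restrictive as the hypothesis of Lemma \ref{lem:dgdt_bound_grads}; hence all of Lemmas \ref{lem:dgdt_bound_grads}, \ref{lem:dgdt_bound_dev_mean} and Theorem \ref{thm:dgdt_bound_dist_min} are in force, and the constants $c_1$, $c_3$, $D$ appearing below are the ones defined there.

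For the consensus term, Lemma \ref{lem:dgdt_bound_dev_mean} (which uses the initialization $x_{i,0}=0$) gives directly $\|x_{i,k}-\bar x_k\| \le \frac{\alpha D}{1-\beta^t}$ for every $k$ and every $i$. For the optimization term, I would apply Theorem \ref{thm:dgdt_bound_dist_min} with the particular choice $\delta = \frac{c_2}{2(1-\alpha c_2)}$, so that $c_1 = \sqrt{1-\alpha c_2/2}\in(0,1)$ and
\begin{align*}
\|\bar x_k - x^\star\| \le c_1^k \|\bar x_0 - x^\star\| + \frac{c_3}{\sqrt{1-c_1^2}\,(1-\beta^t)}.
\end{align*}
Because the method starts from $x_{i,0}=0$ for all $i$, the average satisfies $\bar x_0 = 0$, so $\|\bar x_0 - x^\star\| = \|x^\star\|$. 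Substituting and adding the two bounds yields precisely
\begin{align*}
\|x_{i,k}-x^\star\| \le c_1^k \|x^\star\| + \frac{c_3}{\sqrt{1-c_1^2}\,(1-\beta^t)} + \frac{\alpha D}{1-\beta^t},
\end{align*}
which is the claimed inequality.

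There is essentially no real obstacle in this argument — it is a triangle inequality plus two quotations — so the only thing requiring a moment's care is the bookkeeping: checking that the single stepsize bound in the corollary simultaneously validates the hypotheses of both the lemma and the theorem, that the $x_{i,0}=0$ initialization used in Lemma \ref{lem:dgdt_bound_dev_mean} and Theorem \ref{thm:dgdt_bound_dist_min} is consistent with the one assumed here, and that $\bar x_0=0$ so the leading term becomes $c_1^k\|x^\star\|$. The interpretation is then immediate: each local iterate converges linearly (rate $c_1$) to a ball around $x^\star$ whose radius is $\mathcal{O}\big(\alpha/(1-\beta^t)\big)$, shrinking as the number of consensus steps $t$ grows.
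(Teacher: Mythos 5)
Your proposal is correct and matches the paper's proof exactly: the paper also applies the triangle inequality $\|x_{i,k}-x^\star\|\le\|\bar x_k - x^\star\|+\|x_{i,k}-\bar x_k\|$ and then quotes Theorem \ref{thm:dgdt_bound_dist_min} and Lemma \ref{lem:dgdt_bound_dev_mean} for the two terms. Your additional bookkeeping (verifying the stepsize condition covers both cited results and that $\bar x_0=0$) is sound and, if anything, more explicit than the paper's one-line argument.
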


\begin{proof} %\ab{See Appendix \ref{proof_cor_4.4}.} 
Using the results from Lemma \ref{lem:dgdt_bound_dev_mean}, Theorem \ref{thm:dgdt_bound_dist_min} and the definition of $c_4$, we have
\begin{align*}
	\| x_{i,k} - x^\star \| &\leq \| \bar{x}_k - x^\star \| + \| x_{i,k} - \bar{x}_k \| \\
		& \leq c_1^k \| x^\star\| + \frac{c_3}{\sqrt{1-c_1^2}(1-\beta^t)} + \frac{\alpha D}{1-\beta^t}.
\end{align*}
\end{proof}

The results of Theorem \ref{thm:dgdt_bound_dist_min} and Corollary \ref{cor:dgdt_bound_dist_min} are similar in nature to the results for the standard DGD method. More specifically, and not surprisingly, the DGD$^t$ method converges to a neighborhood of the optimal solution of problem \eqref{eq:cons_prob1} when a constant steplength is employed. Compared to the DGD method, DGD$^t$ converges to a better (smaller) neighborhood which is captured in the analysis
\begin{align*}
		\mathcal{O}\left(\frac{1}{(1-\beta)^2}\right) \quad \text{v.s.} \quad \mathcal{O}\left(\frac{1}{(1-\beta^{{t}})^2}\right).
\end{align*}
Performing multiple communication steps is beneficial as it improves the neighborhood of convergence, however, multiple consensus alone cannot guarantee convergence of the DGD$^t$ method to the solution. Namely, the error term that appears in Theorem \ref{thm:dgdt_bound_dist_min} cannot be eliminated by simply performing multiple rounds of communication, since $\lim_{t \rightarrow \infty} \frac{1}{(1-\beta^{{t}})^2} = 1 \neq 0 $. %Moreover, the DGD$^t$ method has the drawback of increased communication, \textcolor{red}{and that the step are now inherently sequential}.

The results presented in Lemmas \ref{lem:dgdt_bound_grads} and \ref{lem:dgdt_bound_dev_mean}, Theorem \ref{thm:dgdt_bound_dist_min} and Corollary \ref{cor:dgdt_bound_dist_min} are not surprising, nevertheless, the results clearly illustrate the power of performing multiple rounds of communication steps.

\subsection{Numerical Results for DGD$^t$}
\label{sec:num_res_dgdt}

%\todo[inline]{ \textcolor{blue}{RB: Instead should we probably say something about cost framework?} \textcolor{red}{AB: We do mention the cost framework in the next paragraph.}}

In this section, we provide some empirical evidence to support the theoretical results, and to ascertain that, in practice, the benefits of performing multiple consensus steps are realized. We chose a simple quadratic problem of the form
\begin{align*}
		 f(x) = \frac{1}{2} \sum_{i=1}^n x^T A_i x + b_i^Tx,
\end{align*}
where each node $i=\{1,...,n\}$ has local information $A_i \in \mathbb{R}^{p\times p}$ and $b_i \in \mathbb{R}^{p} $. The problem was constructed as described in \cite{mokhtari2017network}; we chose a dimension size $p=10$ and the condition number ($\kappa =  \frac{L_f}{\mu_f}$) was set to $10^2$. For this experiment, the number of agents in the network ($n$) was $10$, and we considered a $4$-cyclic graph topology (i.e., each node is connected to its $4$ immediate neighbors).

We investigated the performance of four different variants of DGD$^t$, with $t=1$ (standard DGD) and $t=2,5,10$. Figure \ref{multiple_cons_DGD} illustrates the performance (relative error, $\|\bar{x}_k - x^\star\|^2/\| x^\star \|^2$, with $x^\star \neq 0$) of the four methods in terms of: (i) iterations, (ii) cost (as described in Section \ref{sec:cost}, with $c_c = c_g = 1$), (iii) number of gradient evaluations, and (iv) number of communications. Each of the four methods has a steplength parameter that needs to be tuned in order to achieve the best performance. We manually tuned the steplength for each method independently. We mention in passing that similar behavior was observed when we measured the performance of the methods in terms of consensus error $\left(\frac{1}{n}\sum_{i=1}^n\|{x}_{i,k} - x^\star\|^2/\| x^\star \|^2\right)$.

\begin{figure}[]
\centering
\includegraphics[width=0.41\columnwidth]{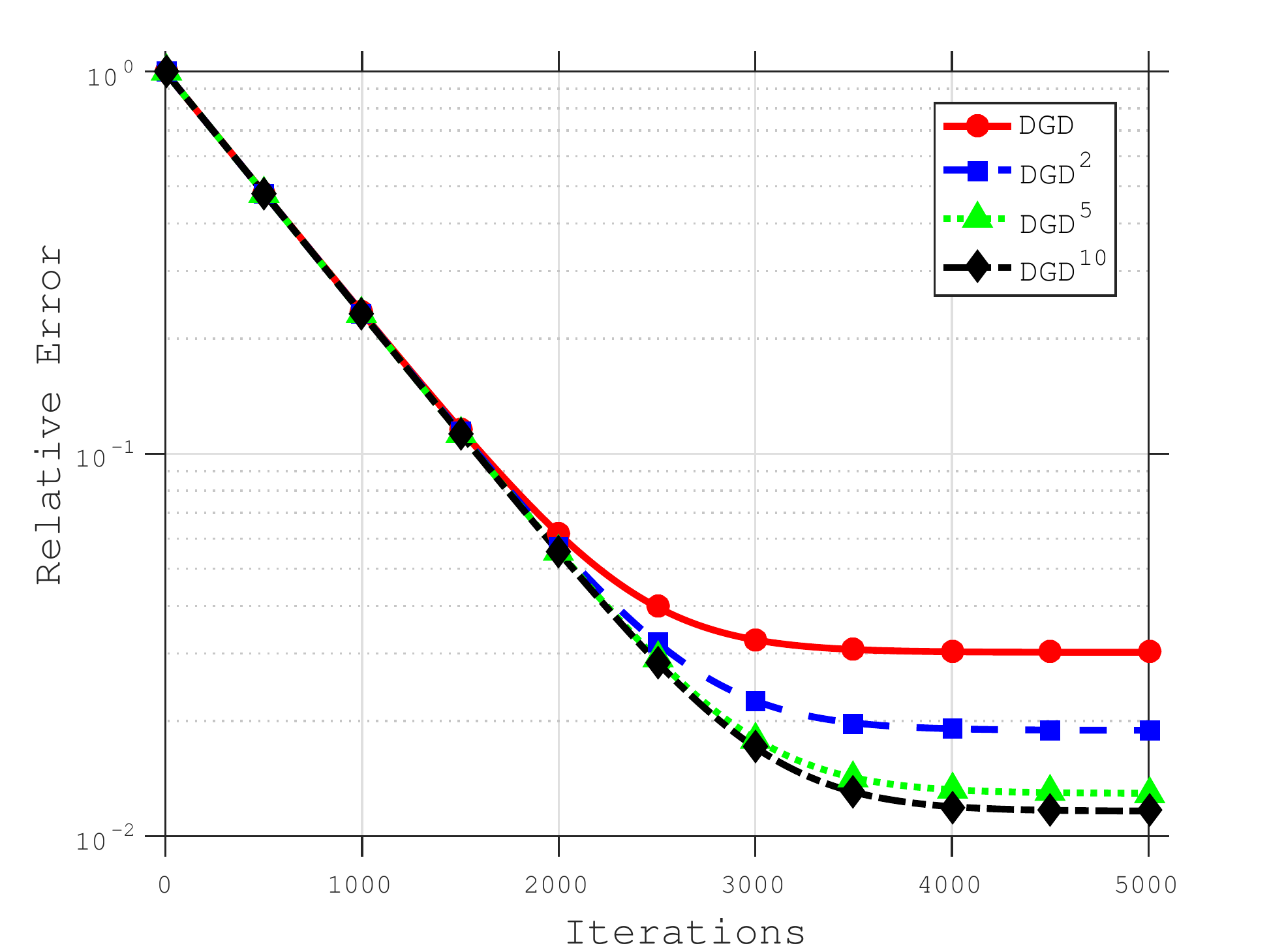}
\includegraphics[width=0.41\columnwidth]{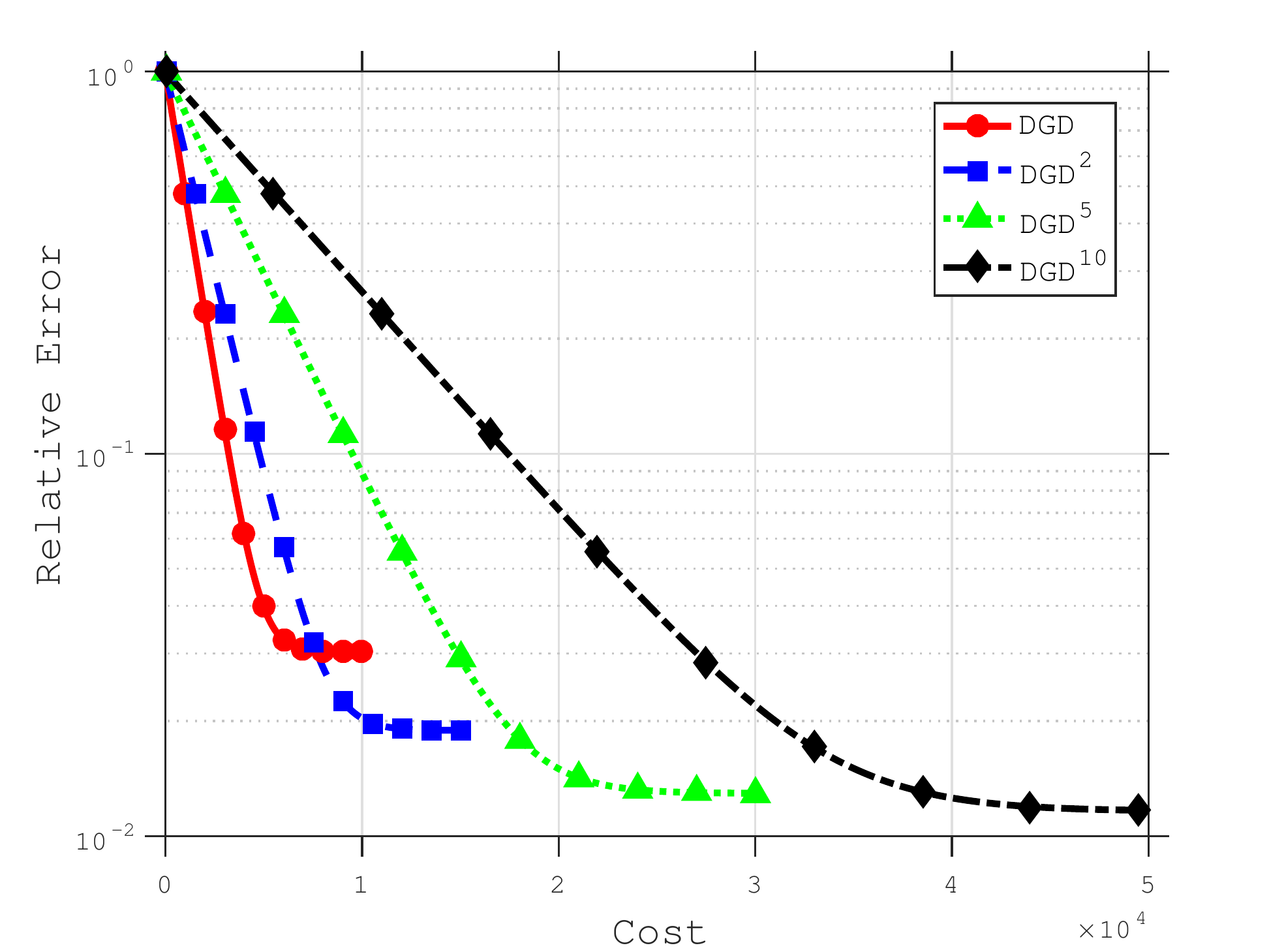}
\hspace{0.1cm}
\includegraphics[width=0.41\columnwidth]{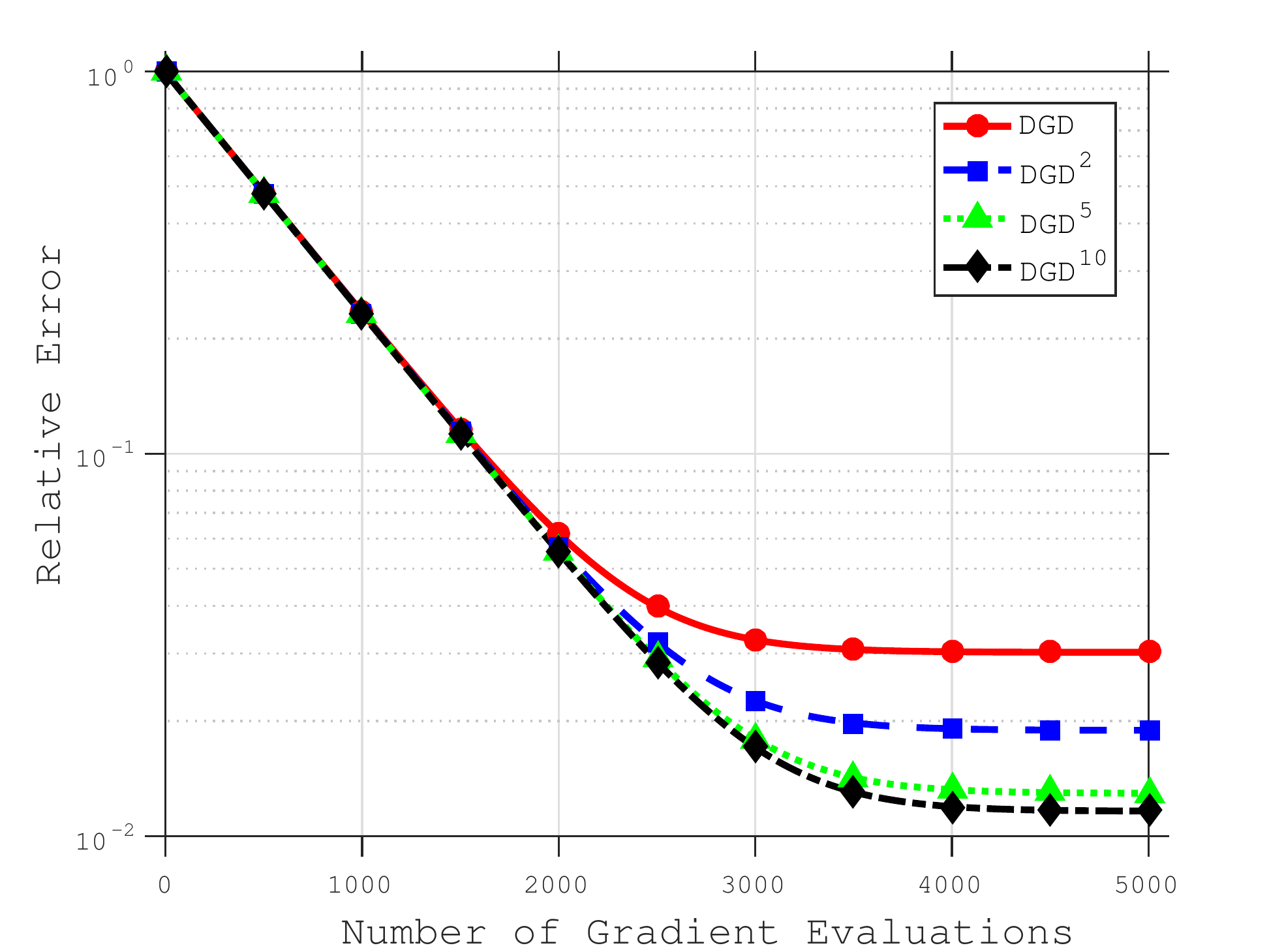}
\includegraphics[width=0.41\columnwidth]{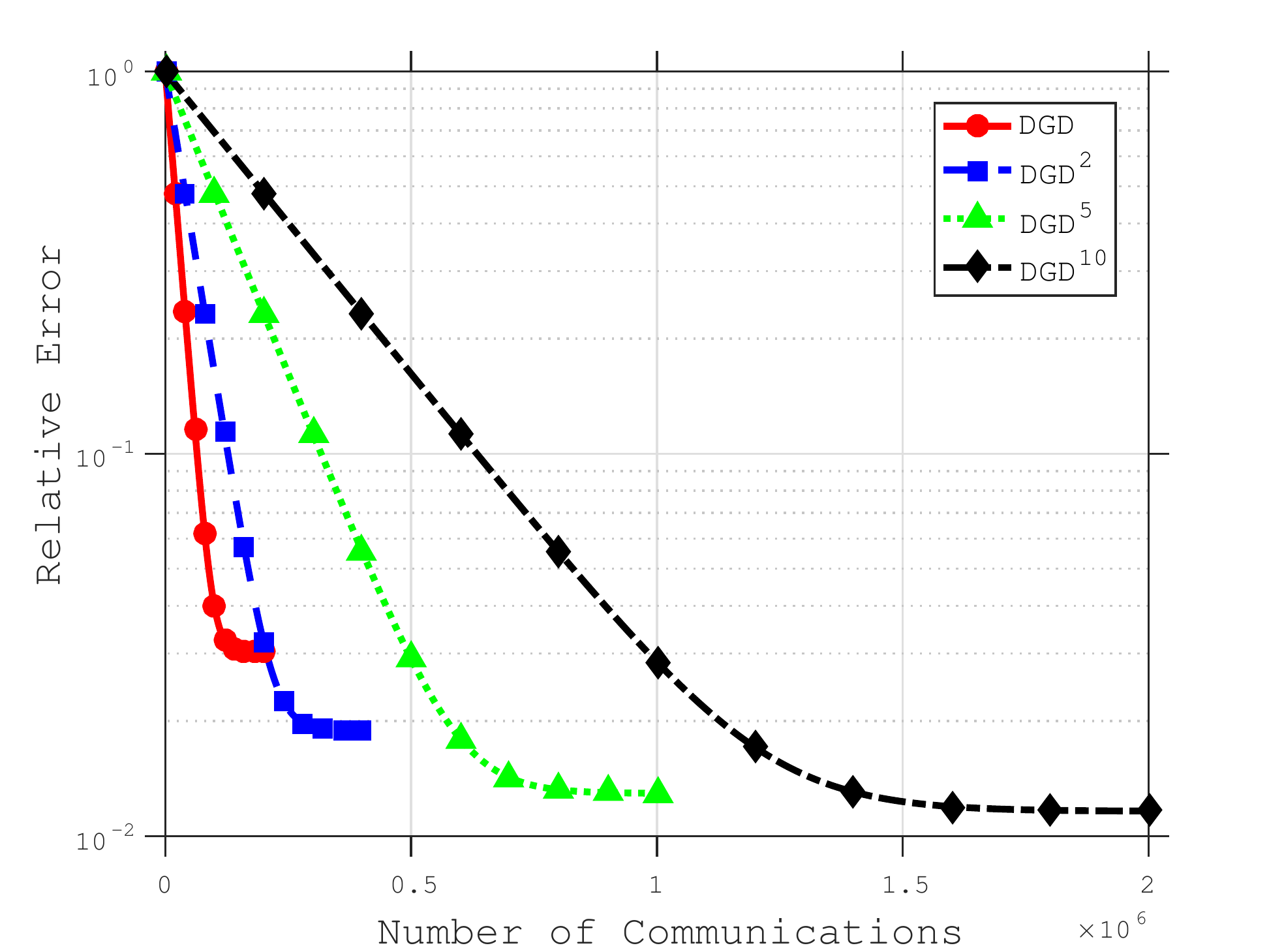}
\caption{Performance of DGD, DGD$^2$, DGD$^5$ and DGD$^{10}$ measured in terms of relative error $\left(\|\bar{x}_k - x^\star\|^2/\| x^\star \|^2\right)$ with respect to: (i) number of iterations, (ii) cost, (iii) number of gradient evaluations, and (iv) number of communications, on a quadratic problem ($n = 10$, $p=10$, $\kappa = 10^2$).}
\label{multiple_cons_DGD}
\end{figure}

Figure \ref{multiple_cons_DGD} clearly illustrates what was predicted by the theory. Firstly, it shows that performing multiple rounds of communication improves the neighborhood of convergence. Secondly, it shows that there is a \textit{diminishing returns} effect of the number of communication rounds on the performance. Namely, the neighborhood improves substantially when going from $1$ consensus step to $2$ consensus steps, however, going from $5$ consensus steps to $10$ consensus steps has a much smaller effect. It is interesting to investigate that the performance of the methods in terms of the cost. Given a fixed cost budget, (e.g., $10^4$) it appears that only one method (blue: $2$ consensus steps per gradient step) is competitive with and better than the baseline DGD method. Again, the reason for this is the marginal returns effect of performing many more consensus steps per iteration. We should, of course, mention that our observation about the performance of the methods in terms of the cost are highly dependent on the cost structure that we chose for these experiments ($c_c = c_g = 1$). \textcolor{black}{In Section \ref{sec:ndgd_num_res}}, we show results for different cost structures.

Before proceeding forward we make one final remark. We tested the performance of the four methods on quadratic problems with different characteristic and different graph sizes. While the absolute performance of the methods changed, the relative performance of the methods did not. More specifically, with the cost structure $c_c = c_g = 1$, similar behavior as that displayed in Figure \ref{multiple_cons_DGD} was observed.

%%%%%%%%%%%%%%%%%%%%%%%%%%%%%%%%%%%%%%%%%%%%%%%%%%%%%%
%%%%%%%%%%%%%%%%%%%%%%%%%%%%%%%%%%%%%%%%%%%%%%%%%%%%%%
\section{NEAR-DGD}
\label{sec:dgd_variants}

Motivated by the improved results of the DGD$^t$ method and the power of performing multiple consensus steps, we ask the question whether a first-order distributed method can achieve exact convergence to the optimal solution of problem \eqref{eq:cons_prob1} by simply performing multiple rounds of communication. The results from the previous section suggest that a simple modification as in DGD$^t$ is not sufficient. To construct new algorithms, we observe that each iteration of DGD [cf.\ Eq.\ \eqref{eq:dgd_vec}] consists of two operators,
\begin{itemize}
		\item Consensus Operator: $\mathcal{W}[\mathbf{x}] = \mathbf{Zx}$,
		\item Gradient Operator: $\mathcal{T}[\mathbf{x}] = \mathbf{x} - \alpha \nabla \textbf{f}(\textbf{x}). $
\end{itemize}
Using these operators the DGD method can be expressed as
\begin{align}\label{eq:DGDOp}
	\textbf{x}_{k+1}= (\mathcal{T}-I + \mathcal{W})[\textbf{x}_k] = \textbf{Z}\textbf{x}_k - \alpha \nabla \textbf{f}(\textbf{x}_k).
\end{align}
\ew{We can view the consensus and gradient steps as two separable operations. This enables a decomposition of the computation and communication operations and allows for flexible customization in view of our new cost framework. An alternative way to combine these two operators is by nesting them. Simply alternating between these two operations leads to} our first new method, which we call the \ndgd method -- \textbf{N}ested \textbf{E}xact \textbf{A}lternating \textbf{R}ecursion method. The $\tau^{th}$ iterate of the method can be expressed as,
\begin{align}\label{eq:NDGD}
	\textbf{x}_\tau = [\mathcal{W}[\mathcal{T}[\cdots
	\rlap{$\overbrace{\phantom{[\mathcal{W}[\mathcal{T}[}}^{x_k}$}
	[\mathcal{W}
	\underbrace{[\mathcal{T}[\mathcal{W}[}_{y_k} \mathcal{T}[\cdots[\mathcal{W}[\mathcal{T}[\textbf{x}_0]]]\cdots]]]]]\cdots]]].
\end{align}
\ew{Each iteration of \ndgd involves the same amount of communication and computation as the standard DGD method. The main difference is that the gradient is now computed at the variable after the consensus step, i.e., the counterpart of Eq.\ \eqref{eq:DGDOp} is given by
\[	\textbf{x}_{k+1} = \textbf{Z}\textbf{x}_k - \alpha \nabla \textbf{f}(\textbf{Z}\textbf{x}_k).\]	Compared to the original DGD method \eqref{eq:dgd_vec}, where the gradient step and communication step can be done in parallel, newer information is used to compute the gradient step in NEAR-DGD, and thus it has two inherently sequential steps.
 Alternatively, we can view the \ndgd method as a method that} produces an intermediate iterate $\textbf{y}_k$ after the gradient step ($\mathcal{T}$), and the iterate $\textbf{x}_k$ after the consensus step ($\mathcal{W}$). The iterates $\textbf{x}_k$ and $\textbf{y}_k$ can be expressed as
\begin{gather*}	
	\textbf{x}_k  = \mathcal{W}[\mathbf{y}_k] = \textbf{Z}\textbf{y}_k \label{eq:tw} \\
	\textbf{y}_{k+1}  =\mathcal{T}[\mathbf{x}_k] =  \textbf{x}_k -   \alpha \nabla \textbf{f}(\textbf{x}_k). %\label{eq:tw_step}
\end{gather*}
%The step \eqref{eq:tw} of the method is the consensus step, and the step \eqref{eq:tw_step} is the gradient step.
We assume here that the local iterates $x_{i,0}$ are initialized to be equal\footnote{With slightly more complex notation and algebra, we can show that similar results hold for either $[\mathcal{T}[\mathcal{W}[x]]$ or $[\mathcal{W}[\mathcal{T}[x]]$, in the case where the agents initialize at different points. }. As a result, we can start with either the $\mathcal{T}$ or $\mathcal{W}$ operation and have the same expression as Eq.\ \eqref{eq:NDGD}, since an initial consensus step would result in the same iterate ($\textbf{x}_0 = \textbf{Z}\textbf{y}_0 = \textbf{y}_0$).

As with the DGD method, we propose a variant of the \ndgd method that performs multiple consensus steps per gradient step. This method--which we call the NEAR-DGD$^t$-- can be expressed as
\begin{align*}
	\textbf{x}_\tau = [\mathcal{W}^t[\mathcal{T}[\cdots
	\rlap{$\overbrace{\phantom{[\mathcal{W}^t[\mathcal{T}[}}^{x_k}$}
	[\mathcal{W}^t
	\underbrace{[\mathcal{T}[\mathcal{W}^t[}_{y_k} \mathcal{T}[\cdots[\mathcal{W}^t[\mathcal{T}[\textbf{x}_0]]]\cdots]]]]]\cdots]]],
\end{align*}
where $\mathcal{W}^t[x]$ denotes $t$ nested consensus operations (steps),
\begin{align*}
	\mathcal{W}^t[x] = \underbrace{\mathcal{W}[\cdots[\mathcal{W}[\mathcal{W}}_{\text{t operations}}[x]]]\cdots].
\end{align*}
In terms of the iterates $\textbf{x}_k$ and $\textbf{y}_k$ the \ndgdt method can be expressed as
\begin{gather}	
	\textbf{x}_k  = \mathcal{W}^t[\mathbf{y}_k] =  \textbf{Z}^t\textbf{y}_k \label{eq:twt} \\
	\textbf{y}_{k+1}  =\mathcal{T}[\mathbf{x}_k] = \textbf{x}_k -   \alpha \nabla \textbf{f}(\textbf{x}_k). \label{eq:twt_step}
\end{gather}
The \ndgd method is a special case of the \ndgdt method\ew{, with $t=1$}. \ew{Given the flexibility in designing algorithms, we note that the number of consensus steps does not have to stay constant throughout the algorithm, hence we }also propose and analyze the \ndgdp method with time-varying consensus steps,
\begin{align*}
	\textbf{x}_\tau &= [\mathcal{W}^{t(\tau)}[\mathcal{T}[\cdots
	\rlap{$\overbrace{\phantom{[\mathcal{W}^{t(k)}[\mathcal{T}[}}^{x_k}$}
	[\mathcal{W}^{t(k)}
	\underbrace{[\mathcal{T}[\mathcal{W}^{t(k-1)}[}_{y_k} \mathcal{T}[\cdots\\
	&\quad\cdots[\mathcal{W}^{t(2)}[\mathcal{T}[\mathcal{W}^{t(1)}[\mathcal{T}[\textbf{x}_0]]]]]\cdots]]]]]\cdots]]],
\end{align*}
where $\mathcal{W}^{t(k)}[x]$ denotes $t(k)$ nested consensus operations (steps). In terms of the iterates $\textbf{x}_k$ and $\textbf{y}_k$ the \ndgdp method can be expressed as
\begin{gather}	
	\textbf{x}_k  = \mathcal{W}^{t(k)}[\mathbf{y}_k] =  \textbf{Z}^{t(k)}\textbf{y}_k \label{eq:tw+} \\
	\textbf{y}_{k+1}  =\mathcal{T}[\mathbf{x}_k]= \textbf{x}_k -   \alpha \nabla \textbf{f}(\textbf{x}_k). \label{eq:tw+_step}
\end{gather}
%\todo[inline]{\textcolor{blue}{RB: Is there a typo in the above equations? Should $x$ and $y$ be flipped?}}
where at every iteration we change/increase the number of consensus steps ($t(k)$).

\subsection{Convergence Analysis of \ndgdt and \ndgdp}

We first analyze the \ndgdt method, % the \ndgd method that performs $t$ consensus steps per gradient computation,
 and then illustrate the convergence properties of the \ndgdp method.
We adopt the same assumptions (\ref{assm:Lip} \& \ref{assm:Strong}) as in Section \ref{sec:dgd}, and similarly to Section \ref{sec:dgd}, we define the average of $y_{i,k}$ as
\begin{align*}
	\bar{y}_k = \frac{1}{n}\sum_{i=1}^n y_{i,k}.
\end{align*}

We note that the gradient step in the \ndgd method, and by extension the \ndgdt and \ndgdp methods, can be viewed as a single step gradient iteration at the point $x_k$ on the following unconstrained problem
\begin{align}	\label{eq:tw_penalty}
\min_{x_i \in \mathbb{R}^p} \quad\sum_{i=1}^n f_i(x_i).
\end{align}
%The $\textbf{y}_k$ iterates updates ofan be viewed as single gradient descent step on the problem \eqref{eq:tw_penalty}.
We use this observation to bound the iterates $\textbf{x}_k$ and $\textbf{y}_k$.

\begin{lem} 		\label{lem:twt_bound_iterates}
\textbf{(Bounded iterates)} Suppose Assumptions \ref{assm:Lip} \& \ref{assm:Strong} hold, and let the steplength satisfy
\begin{align*}
\alpha < \frac{1}{L}
\end{align*}
where  $L = \max_{i}L_i$.  Then, starting from $x_{i,0}= s_0$ or $y_{i,0}= s_0$ ($1 \leq i \leq n$), the iterates generated by the  \ndgdt method \eqref{eq:twt}-\eqref{eq:twt_step} are bounded, namely,
\begin{align*}
	\| \textbf{x}_k \| \leq D, \quad {\| \textbf{y}_k \| \leq D}
\end{align*}
for all $k=1,2,\ldots$, where $D = \| \textbf{y}_0 - \textbf{u}^\star\| + \frac{\nu + {4}}{\nu}\|\textbf{u}^\star \|$, $\textbf{u}^\star = [u_1^\star;u_2^\star;...;u_n^\star] \in \mathbb{R}^{np}$, $u_i^\star = \arg\min_{u_i}f_i(u_i)$, $\textbf{u}^\star$ is the optimal solution of \eqref{eq:tw_penalty}, $\nu = 2\alpha \textcolor{black}{\gamma}$, $\textcolor{black}{\gamma} = \min_i \textcolor{black}{\gamma}_i$ and \textcolor{black}{$\textcolor{black}{\gamma}_i = \frac{\mu_i L_i}{\mu_i + L_i}$}. 
\end{lem}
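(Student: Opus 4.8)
The plan is to reduce the \ndgdt recursion to a scalar ``contraction-plus-constant'' inequality for the distance $\|\textbf{y}_k - \textbf{u}^\star\|$ and then unroll it. Writing one step as $\textbf{y}_{k+1} = \mathcal{T}[\textbf{Z}^t\textbf{y}_k]$, I would aim for a bound of the form $\|\textbf{y}_{k+1}-\textbf{u}^\star\| \le \rho\,\|\textbf{y}_k-\textbf{u}^\star\| + c$ with $\rho\in(0,1)$. Since $\nabla\textbf{f}(\textbf{u}^\star)=0$ by construction, $\textbf{u}^\star$ is a fixed point of the gradient operator $\mathcal{T}$, and the consensus operator is nonexpansive; once $\|\textbf{y}_k\|$ is bounded uniformly in $k$, the bound $\|\textbf{x}_k\| = \|\textbf{Z}^t\textbf{y}_k\| \le \|\textbf{y}_k\|$ follows for free.

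The first ingredient is that $\mathcal{T}$ is a contraction toward $\textbf{u}^\star$. Because $\textbf{f}$ is separable with each $f_i$ being $\mu_i$-strongly convex and $L_i$-smooth, I would apply \cite[Theorem 2.1.12]{nesterov2013introductory} block-wise at $z_i$ and $u_i^\star$; using $\alpha < 1/L \le 1/L_i \le 2/(\mu_i+L_i)$ (the last inequality from $\mu_i\le L_i$) to discard the resulting nonpositive $\|\nabla f_i(z_i)\|^2$ term, this gives $\|\mathcal{T}[\textbf{z}] - \textbf{u}^\star\|^2 \le (1-\nu)\|\textbf{z}-\textbf{u}^\star\|^2$ with $\nu = 2\alpha\gamma \in (0,1)$ (here $\nu<1$ since $\gamma\le L/2$), hence $\|\mathcal{T}[\textbf{z}] - \textbf{u}^\star\| \le (1-\nu/2)\|\textbf{z}-\textbf{u}^\star\|$. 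The second ingredient is that $\textbf{Z}^t = \textbf{W}^t\otimes I_p$ has operator norm $1$, because all eigenvalues of $\textbf{W}$ lie in $(-1,1]$; consequently $\|\textbf{Z}^t\textbf{v}\|\le\|\textbf{v}\|$ and $\|(\textbf{Z}^t - I)\textbf{v}\|\le 2\|\textbf{v}\|$ for any $\textbf{v}$.

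Combining the two, I split $\textbf{Z}^t\textbf{y}_k - \textbf{u}^\star = \textbf{Z}^t(\textbf{y}_k-\textbf{u}^\star) + (\textbf{Z}^t-I)\textbf{u}^\star$ to obtain
\begin{align*}
\|\textbf{y}_{k+1}-\textbf{u}^\star\| \le \left(1-\tfrac{\nu}{2}\right)\left(\|\textbf{y}_k-\textbf{u}^\star\| + 2\|\textbf{u}^\star\|\right) \le \left(1-\tfrac{\nu}{2}\right)\|\textbf{y}_k-\textbf{u}^\star\| + 2\|\textbf{u}^\star\|.
\end{align*}
Unrolling this geometric recursion with ratio $1-\nu/2$ gives $\|\textbf{y}_k-\textbf{u}^\star\| \le \|\textbf{y}_0-\textbf{u}^\star\| + \frac{2\|\textbf{u}^\star\|}{\nu/2} = \|\textbf{y}_0-\textbf{u}^\star\| + \frac{4}{\nu}\|\textbf{u}^\star\|$ for all $k$, and adding $\|\textbf{u}^\star\|$ yields $\|\textbf{y}_k\|\le D$; then $\|\textbf{x}_k\| = \|\textbf{Z}^t\textbf{y}_k\| \le \|\textbf{y}_k\| \le D$. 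The variant initialized at $\textbf{x}_0$ is covered by the same argument, since a consensual start satisfies $\textbf{x}_0 = \textbf{Z}^t\textbf{y}_0 = \textbf{y}_0$.

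The one genuinely non-routine point is that the consensus operator is \emph{not} a contraction, nor even a nonexpansion, toward $\textbf{u}^\star$: in general $\textbf{u}^\star$ is not a consensual vector, so one cannot simply compose two contractions. The remedy above — isolating the fixed drift term $(\textbf{Z}^t - I)\textbf{u}^\star$ and bounding it by the constant $2\|\textbf{u}^\star\|$ — turns the composition $\mathcal{T}\circ\mathcal{W}^t$ into a contraction-plus-constant map, whose orbit is bounded; everything else is the block-wise strong-convexity estimate and a geometric-series sum. The only care needed is to verify that $\alpha < 1/L$ forces both $\alpha \le 2/(\mu_i+L_i)$ and $\nu<1$, which is exactly what licenses dropping the gradient term and passing to the clean contraction factor $1-\nu/2$.
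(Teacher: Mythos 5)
Your proposal is correct and follows essentially the same route as the paper's proof: a block-wise strong-convexity contraction estimate for the gradient operator toward $\textbf{u}^\star$ (the paper cites Nesterov's Theorem 2.1.15 to get the factor $\sqrt{1-\nu}$, which you upper-bound by $1-\nu/2$), the same splitting $\|\textbf{Z}^t\textbf{y}_k-\textbf{u}^\star\|\le\|\textbf{y}_k-\textbf{u}^\star\|+\|I-\textbf{Z}^t\|\,\|\textbf{u}^\star\|$ with $\|I-\textbf{Z}^t\|\le 2$, and the same geometric unrolling yielding $\|\textbf{y}_0-\textbf{u}^\star\|+\tfrac{4}{\nu}\|\textbf{u}^\star\|$. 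The stepsize verification $\alpha<1/L\le 2/(\mu_i+L_i)$ and the final nonexpansiveness step $\|\textbf{x}_k\|\le\|\textbf{y}_k\|$ also match the paper exactly.
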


\begin{proof} %Let
%\begin{align*}
%\textbf{x}_k =
%	\begin{bmatrix}
%   		  x_{1,k}  \\
%    		     x_{2,k}  \\
%    		   \vdots  \\
%    		     x_{n,k}
%	\end{bmatrix} = \textbf{Z}
%	\begin{bmatrix}
%   		  y_{1,k}  \\
%    		     y_{2,k}  \\
%    		   \vdots  \\
%    		     y_{n,k}
%	\end{bmatrix}
%	 \in \mathbb{R}^{np}.
%\end{align*}

%Given that $\alpha <\frac{1}{L}$, using the standard analysis of the gradient descent method \cite{bertsekas1989parallel} on strongly convex functions, we have
Using standard results for the gradient descent method \cite[Theorem 2.1.15, Chapter 2]{nesterov2013introductory}, and noting that $\alpha < \frac{1}{L} < \frac{2}{\mu_i + L_i}$, which is the necessary condition on the steplength, we have for any $i \in \{1,2,...,n\}$
\begin{align*}
	\| x_{i,k} - \alpha \nabla {f}_i({x}_{i,k}) - {u_i}^\star\| &\leq \sqrt{1 - 2\alpha \textcolor{black}{\gamma}_i} \|x_{i,k} - {u_i}^\star\|.
\end{align*}
From this, we have,
\begin{align}
\| \textbf{x}_k - \alpha \nabla \textbf{f}(\textbf{x}_k) - \textbf{u}^\star\| &= \sqrt{\sum_{i=1}^n \| x_{i,k} - \alpha \nabla {f}_i({x}_{i,k}) - {u_i}^\star\|^2}\nonumber\\
	& \leq \sqrt{\sum_{i=1}^n (1 - 2\alpha \textcolor{black}{\gamma}_i) \|x_{i,k} - {u_i}^\star\|^2}\nonumber\\
	& \leq \sqrt{ (1- \nu)} \| \textbf{x}_k - \textbf{u}^\star \| \label{eq:dgdt_gradd}.
\end{align}
where the last inequality follows from the definition of $\nu$.

%\begin{align}
%	\| \textbf{x}_k - \alpha \nabla \textbf{f}(\textbf{x}_k) - \textbf{x}^\star\| &= \left\| \textbf{x}_k -\textbf{x}^\star - \alpha\int_{t=0}^{1}\left(\nabla^2 \textbf{f} (t\textbf{x}^\star + (1-t)\textbf{x}_k)\right)(\textbf{x}_k -\textbf{x}^\star)dt\right\| \nonumber\\
%	& \leq \left\|I - \alpha \int_{t=0}^{1}\left(\nabla^2 \textbf{f} (t\textbf{x}^\star + (1-t)\textbf{x}_k)\right) dt \right\|\| \textbf{x}_k - \textbf{x}^\star \| \nonumber\\
%	& \leq \max \left\{|1 - \alpha \mu|, |1-\alpha L| \right\}\| \textbf{x}_k - \textbf{x}^\star \| \nonumber\\
%	& \leq (1 - \alpha \mu) \| \textbf{x}_k - \textbf{x}^\star \|. \label{eq:gd_bound}
%\end{align}
%where the first equality is obtained by using Taylor's theorem \cite[Chapter 2]{mybook}, the second inequality is obtained using the fact that the spectral radius ($\rho$) of matrix $I - A$ is $\rho(I - A) = \max\{|1 - \lambda_{min}(A)|, |1 - \lambda_{max}(A)|\}$, and the last inequality is due to $\alpha < \frac{1}{L}$ and $L \geq \mu$.

Using the definitions of $\nu$, $\textbf{y}_{k+1}$ and Eq. \eqref{eq:dgdt_gradd}, we have
\begin{align*}
	\| \textbf{y}_{k+1} - \textbf{u}^\star \| &= \| \textbf{x}_k - \alpha \nabla \textbf{f}(\textbf{x}_k) - \textbf{u}^\star \|\\
	& \leq \sqrt{(1-\nu)} \| \textbf{x}_k -\textbf{u}^\star\|\\
	& = \sqrt{(1-\nu)} \| \textbf{Z}^t\textbf{y}_k -\textbf{u}^\star\|\\
		& \leq \sqrt{(1-\nu)} \left[\|\textbf{Z}^t\| \| \textbf{y}_k -\textbf{u}^\star\| + \|I-\textbf{Z}^t\| \| \textbf{u}^\star\| \right].
	%	& \leq \sqrt{(1-\nu)}  \| \textbf{y}_k -\textbf{u}^\star\| +  2\sqrt{(1-\nu)}\| \textbf{u}^\star\|,
\end{align*}
%where the last inequality follows from the fact that the eigenvalues of $\textbf{Z}^t$ and $I-\textbf{Z}^t$ are bounded above by $1$ and $2$, respectively.
The eigenvalues of matrix $\textbf{Z}^t$ are the same as those of matrix $\textbf{W}^t$. The spectrum property of $\textbf W$ guarantees that the magnitude of each eigenvalue is upper bounded by 1. Hence $\|\textbf{Z}\|\leq 1$ and $\|I-\textbf{Z}^t\|\leq 2$ for all $t$. Hence the above relation implies that
\[	\| \textbf{y}_{k+1} - \textbf{u}^\star \| \leq \sqrt{(1-\nu)}  \| \textbf{y}_k -\textbf{u}^\star\| +  2\sqrt{(1-\nu)}\| \textbf{u}^\star\|. \]
Recursive application of the above relation gives,
\begin{align*}
	\| &\textbf{y}_{k+1} - \textbf{u}^\star \| \\&\leq (1-\nu)^{(k+1)/2}  \| \textbf{y}_0 -\textbf{u}^\star\| +  2\sum_{j=0}^k(1-\nu)^{(j+1)/2}\| \textbf{u}^\star\|\\
	& \leq   \| \textbf{y}_0 -  \textbf{u}^\star\| +  \frac{2\sqrt{1 - \nu}}{1 - \sqrt{1 -\nu}}\| \textbf{u}^\star\| \\
	& \leq   \| \textbf{y}_0 -  \textbf{u}^\star\| +  \frac{4}{\nu}\| \textbf{u}^\star\|.
\end{align*}
Thus, we bound the iterate as
\begin{align*}
	\| \textbf{y}_{k+1} \| &\leq	\| \textbf{y}_{k+1} - \textbf{u}^\star \| + \| \textbf{u}^\star\|\\
	& \leq  \| \textbf{y}_0 - \textbf{u}^\star\| + \frac{\nu + 4}{\nu}\|\textbf{u}^\star \|.
\end{align*}

We now show that the same result is true for the iterates $\textbf{x}_k$. Using the definition of $\textbf{x}_k$ Eq. \eqref{eq:twt}
\begin{align*}
	\| \textbf{x}_{k+1}\| &= \| \textbf{Z}^t \textbf{y}_{k+1} \|\\
	& \leq \| \textbf{Z}^t\| \| \textbf{y}_{k+1} \|\\
	& \leq \| \textbf{y}_{k+1} \|\\
	& \leq D,
\end{align*}
which completes the proof.
\end{proof}

Lemma \ref{lem:twt_bound_iterates} shows that the iterates generated by the \ndgdt method are bounded. Since eigenvalues of $Z^t$ and $I-Z^t$ are bounded above by 1 and 2, for any $t$, respectively, the same analysis can be used to show that the iterates generated by the \ndgdp method are also bounded.

\begin{lem} 	\label{lem:twt_bound_dev_mean}
\textbf{(Bounded deviation from mean)} If Assumptions \ref{assm:Lip} \& \ref{assm:Strong} hold. Then, starting from $x_{i,0}= s_0$ or $y_{i,0}= s_0$ ($1 \leq i \leq n$), the total deviation of each agent's estimate ($x_{i,k}$) from the mean is bounded, namely,
\begin{gather}		\label{eq:twt_lem2_p1}
	{\| x_{i,k} - \bar{x}_k\| \leq \beta^{{t}} D }
\end{gather}
and
\begin{gather}
	{\| \nabla f_i(x_{i,k}) - \nabla f_i(\bar{x}_k)\| \leq \beta^{{t}} D L_i} \label{eq:twt_lem2_p2}\\
	{\| g_k - \bar{g}_k \| \leq\beta^{{t}} D L}	\label{eq:twt_lem2_p3}
\end{gather}
for all $k=1,2,\ldots$ and $1 \leq i \leq n$. Moreover, the total deviation of the local iterates $y_{i,k}$ is also bounded,
\begin{align}	\label{eq:twt_lem2_p4}
	\| y_{i,k} - \bar{y}_k\| \leq \beta^{{t}} D  + 2D.
\end{align}
\end{lem}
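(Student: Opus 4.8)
The plan is to reduce all four bounds to the iterate bounds $\norm{\textbf{x}_k}\le D$, $\norm{\textbf{y}_k}\le D$ of Lemma~\ref{lem:twt_bound_iterates} together with the spectral properties of $\textbf{W}$ recalled in Section~\ref{sec:prelim}. Throughout, let $\textbf{P} = \tfrac1n(1_n1_n^T)\otimes I$ denote the averaging operator, so that $\bar{\textbf{x}}_k = [\bar x_k;\dots;\bar x_k] = \textbf{P}\textbf{x}_k$ and $\bar{\textbf{y}}_k = \textbf{P}\textbf{y}_k$; since $\textbf{W}$ is doubly stochastic we have $1_n^T\textbf{W}^t = 1_n^T$, hence $\textbf{P}\textbf{Z}^t = \textbf{P}$, and so by \eqref{eq:twt} $\bar{\textbf{x}}_k = \textbf{P}\textbf{Z}^t\textbf{y}_k = \textbf{P}\textbf{y}_k = \bar{\textbf{y}}_k$.

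To prove \eqref{eq:twt_lem2_p1}, I would combine \eqref{eq:twt} with $\bar{\textbf{x}}_k = \textbf{P}\textbf{y}_k$ to write $\textbf{x}_k - \bar{\textbf{x}}_k = (\textbf{Z}^t - \textbf{P})\textbf{y}_k$. The key identity is $\norm{\textbf{Z}^t - \textbf{P}} = \beta^t$: because $\textbf{W}$ is symmetric, $\tfrac1n 1_n1_n^T$ is exactly the orthogonal projector onto its (simple) eigenvalue-$1$ eigenspace, so $\textbf{W}^t - \tfrac1n 1_n1_n^T$ has eigenvalues $\lambda_i^t$ ranging over the eigenvalues $\lambda_i\ne 1$ of $\textbf{W}$, each of magnitude at most $\beta^t$, and tensoring with $I$ leaves the spectral norm unchanged. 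Then $\norm{\textbf{x}_k - \bar{\textbf{x}}_k} \le \beta^t\norm{\textbf{y}_k} \le \beta^t D$ by Lemma~\ref{lem:twt_bound_iterates}, and \eqref{eq:twt_lem2_p1} follows from $\norm{x_{i,k}-\bar x_k}\le\norm{\textbf{x}_k-\bar{\textbf{x}}_k}$. Inequalities \eqref{eq:twt_lem2_p2} and \eqref{eq:twt_lem2_p3} are then immediate: the first is Assumption~\ref{assm:Lip} applied to \eqref{eq:twt_lem2_p1}, and the second follows from $\norm{g_k-\bar g_k}\le\tfrac1n\sum_{i=1}^n L_i\norm{x_{i,k}-\bar x_k}\le\beta^t D L$ with $L=\max_i L_i$.

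For \eqref{eq:twt_lem2_p4} I would use the gradient step \eqref{eq:twt_step}: writing $\textbf{y}_k = \textbf{x}_{k-1} - \alpha\nabla\textbf{f}(\textbf{x}_{k-1})$ and applying $I-\textbf{P}$ gives $\textbf{y}_k - \bar{\textbf{y}}_k = (\textbf{x}_{k-1}-\bar{\textbf{x}}_{k-1}) - \alpha(I-\textbf{P})\nabla\textbf{f}(\textbf{x}_{k-1})$. The first term is at most $\beta^t D$ by the argument above, which holds for every $k-1\ge 0$ (the case $k-1=0$ is trivial, since the consensus initialization makes $\textbf{x}_0$ consensual). For the second term, since $I-\textbf{P}$ is an orthogonal projection, $\norm{I-\textbf{P}}\le 1$, and since $\alpha\nabla\textbf{f}(\textbf{x}_{k-1}) = \textbf{x}_{k-1} - \textbf{y}_k$ by \eqref{eq:twt_step}, we get $\alpha\norm{(I-\textbf{P})\nabla\textbf{f}(\textbf{x}_{k-1})} \le \norm{\textbf{x}_{k-1}-\textbf{y}_k}\le\norm{\textbf{x}_{k-1}}+\norm{\textbf{y}_k}\le 2D$ again by Lemma~\ref{lem:twt_bound_iterates}. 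Adding these two bounds and using $\norm{y_{i,k}-\bar y_k}\le\norm{\textbf{y}_k-\bar{\textbf{y}}_k}$ yields \eqref{eq:twt_lem2_p4}.

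The argument is mostly bookkeeping; the only genuine ingredients are the spectral-norm identity $\norm{\textbf{Z}^t - \tfrac1n(1_n1_n^T)\otimes I} = \beta^t$ (the same engine used in Lemma~\ref{lem:dgdt_bound_dev_mean}) and the observation that $\alpha\nabla\textbf{f}(\textbf{x}_{k-1})$ can be rewritten as $\textbf{x}_{k-1}-\textbf{y}_k$, so that a crude gradient bound is replaced by the already-established iterate bounds. I do not anticipate a real obstacle; the points requiring care are the base case $k=1$ in \eqref{eq:twt_lem2_p4} and invoking the correct spectral facts for $\textbf{W}$ (symmetry, simplicity of the eigenvalue $1$, and $\beta$ being the second-largest eigenvalue magnitude).
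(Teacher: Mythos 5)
Your proof is correct and, for \eqref{eq:twt_lem2_p1}--\eqref{eq:twt_lem2_p3}, follows exactly the paper's route: write $\textbf{x}_k-\bar{\textbf{x}}_k=\bigl(\textbf{Z}^t-\tfrac{1}{n}(1_n1_n^T)\otimes I\bigr)\textbf{y}_k$, use the spectral bound $\beta^t$ for that operator, invoke the iterate bound of Lemma \ref{lem:twt_bound_iterates}, and then pass to the gradients via Lipschitz continuity. The only divergence is in \eqref{eq:twt_lem2_p4}: the paper splits $\| y_{i,k}-\bar y_k\|\le\| x_{i,k}-\bar y_k\|+\| y_{i,k}-x_{i,k}\|$ and bounds the second term by $\|(I-\textbf{Z}^t)\textbf{y}_k\|\le 2\|\textbf{y}_k\|\le 2D$, whereas you go back through the gradient step at iteration $k-1$ and bound $\alpha\|\nabla\textbf{f}(\textbf{x}_{k-1})\|=\|\textbf{x}_{k-1}-\textbf{y}_k\|\le 2D$; both use the same two ingredients, land on the same constant $\beta^tD+2D$, and your version merely requires the separate (and correctly handled) treatment of the consensual base case $\textbf{x}_0$.
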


\begin{proof}
Consider,
\begin{align*}
	\|x_{i,k} - \bar{x}_k\| & = \|x_{i,k} - \bar{y}_k\| \\
	&\leq \left\|\textbf{x}_k - \frac{1}{n}\left((1_n1_n^T)\otimes I \right) \textbf{y}_k\right\| \\
	&=\left\|(\textbf{W}^{t} \otimes I)\textbf{y}_k  - \frac{1}{n}\left((1_n1_n^T)\otimes I\right) \textbf{y}_k \right\| \\
	&\leq \left\|\left(\textbf{W}^{t}  - \frac{1}{n}\left((1_n1_n^T) \right)\otimes I \right)\right\|\|\textbf{y}_k\| \\
	&\leq \beta^t \|\textbf{y}_k\| \leq \beta^{{t}} D,
\end{align*}
where the first equality is due to the fact that $\bar{x}_k = \textbf{Z}^t\bar{y}_k = \bar{y}_k$ and the last inequality is due to Lemma \ref{lem:twt_bound_iterates}.

The result \eqref{eq:twt_lem2_p2} is a direct consequence of the \eqref{eq:twt_lem2_p1} and the Lipschitz continuity of individual gradients (Assumption \ref{assm:Lip}). To establish the next result \eqref{eq:twt_lem2_p3}, we have
\begin{align*}
\|g_k - \bar{g}_k\| &= \left\|\frac{1}{n} \sum_{i=1}^{n} \left(\nabla f_i(x_{i,k}) -  \nabla f_i(\bar{x}_k)\right) \right\| \\
& \leq \frac{1}{n} \sum_{i=1}^{n} L_i \|x_{i,k} - \bar{x}_k\| \leq \beta^{{t}} D L.
\end{align*}
%The iterate $\textbf{x}_k$ update equation can be written as,
%\begin{align*}
%	\textbf{x}_k = -\alpha \sum_{s=0}^{k-1}(\textbf{W}^{t(k-1-s)} \otimes I) \nabla \textbf{f}(x_s).
%\end{align*}
%Thus,
%\begin{align*}
%\|\textbf{x}_k\| &=\alpha \left\| \sum_{s=0}^{k-1} (\textbf{W}^{t(k-1-s)}\otimes I) \nabla \textbf{f}(x_s)\right\|\\
%&\leq \alpha \sum_{s=0}^{k-1} \left\|\textbf{W}^{t(k-1-s)}\right\|\|\nabla \textbf{f}(x_s)\| \\
%&\leq \alpha \left(\frac{k(k+1)}{2}\right)D \\
%\end{align*}
%\textcolor{black}{where the first inequality is due to ...}. Combining the last two expression yields,
%\begin{align}	\label{eq:twt_lem2_p1}
%	{\| y_{i,k} - \bar{x}_k\| \leq \frac{\beta^{{t}}\alpha D k(k+1)}{2}}.
%\end{align}

Finally, for the local $y_{i,k}$ iterates in  \eqref{eq:twt_lem2_p4}, consider
\begin{align*}
	\|y_{i,k} - \bar{y}_k\| & \leq \| x_{i,k} - \bar{y}_k\| + \| y_{i,k} - x_{i,k}\|\\
	& \leq \beta^{{t}} D + \| \textbf{y}_k - \textbf{x}_k \| \\
	&=  \beta^{{t}} D + \left\| \textbf{y}_k - \left( \textbf{W}^t \otimes I\right)\textbf{y}_k \right\| \\
	&\leq  \beta^{{t}} D + \left\| \left(I - \textbf{W}^t \otimes I\right) \right\| \|\textbf{y}_k \| \\
	&\leq  \beta^{{t}} D + 2D,
\end{align*}
where the second inequality is due to \eqref{eq:twt_lem2_p1} and the last inequality is due to Lemma \ref{lem:twt_bound_iterates}.
\end{proof}

Similar to Lemma \ref{lem:dgdt_bound_dev_mean}, Lemma \ref{lem:twt_bound_dev_mean} shows that the distance between the local iterates $x_{i,k}$ and $y_{i,k}$ are bounded from their means.

We now use an argument similar to that in the previous section to investigate the optimization error of the \ndgdt method. For that we make the following observation, due to the doubly-stochastic nature of $\textbf{W}$,
\begin{align*}
	\bar{\textbf{y}}_{k+1} &= \frac{1}{n}\left((1_n1_n^T) \otimes I \right){\textbf{y}}_{k+1} \\
	&= \frac{1}{n}\left((1_n1_n^T) \otimes I \right) \left((\textbf{W}^{{t}} \otimes I )\textbf{y}_{k} - \alpha \nabla \textbf{f}(\textbf{x}_k) \right)\\
	%& = \frac{1}{n}((1_n1_n^T) \otimes I)((\textbf{W}^{{t}} \otimes I)\textbf{y}_{k} - \alpha \nabla \textbf{f}(\textbf{x}_k))\\
	& = \frac{1}{n}\left((1_n1_n^T\textbf{W}^{{t}}) \otimes I \right)\textbf{y}_{k} -  \frac{\alpha}{n} \left((1_n1_n^T) \otimes I \right) \nabla \textbf{f}(\textbf{x}_k)\\
	& = [\bar{y}_k - \alpha g_k; \bar{y}_k - \alpha g_k;\ldots;\bar{y}_k - \alpha g_k],
\end{align*}
where $g_k$ is the average of local gradients as defined in \eqref{eq:g_barg}.
%\todo[inline]{ \textcolor{blue}{RB: Albert, didn't we agree to remove this and write the below equality directly?}}
Thus we have
\begin{align}	\label{eq:twt_errors}
	\bar{y}_{k+1} = \bar{y}_k - \alpha g_k.
\end{align}
The above equation can be viewed as an inexact gradient descent step for the problem \eqref{eq:prob_bar}, where $\bar{g}_k$ is the exact gradient. Before we proceed we should note again that $\bar{x}_k = \textbf{Z}^t\bar{y}_k = \bar{y}_k$ due to the nature of the matrix $\textbf{Z}$. We next follow Theorem \ref{thm:dgdt_bound_dist_min} to bound the distance of iterates to the optimal solution.
%\textcolor{black}{Comment about bar x and bar y.}

\begin{thm} \label{thm:twt_bound_dist_min}
\textbf{(Bounded distance to minimum)} Suppose Assumptions \ref{assm:Lip} \& \ref{assm:Strong} hold, and let the steplength satisfy
\begin{align*}
		\alpha \leq \min \left \{ {\frac{1}{L}, c_4} \right \}
\end{align*}
where  $L = \max_{i}L_i$ and $c_4 = \frac{\textcolor{black}{2}}{\mu_{\bar{f}} +L_{\bar{f}}}$. Then, starting from $x_{i,0} = s_0$ or $y_{i,0} = s_0$ ($1\leq i \leq n$), for all $k=0,1,2,\ldots$
\begin{align*}
	\| \bar{x}_{k+1} - x^\star\|^2 \leq c_1^2 \| \bar{x}_{k} - x^\star \|^2 + {c_3^2\beta^{2{t}}},
\end{align*}
where
\begin{align*}
	c_1^2 = 1 - \alpha c_2 + \alpha \delta - \alpha^2 \delta c_2, \;\; c_2 = \frac{\textcolor{black}{2}\mu_{\bar{f}} L_{\bar{f}}}{\mu_{\bar{f}} + L_{\bar{f}}},\\
	c_3^2 = \alpha(\alpha + \delta^{-1}){ D^2 L^2}, \;\; D = \| \textbf{y}_0 - \textbf{u}^\star\| + \frac{\nu + {4}}{\nu}\|\textbf{u}^\star \|,
\end{align*}
$x^\star$ is the optimal solution of \eqref{eq:cons_prob1}, $\textbf{u}^\star$ is the optimal solution of \eqref{eq:tw_penalty}, $\nu = 2\alpha \textcolor{black}{\gamma}$, $\textcolor{black}{\gamma} = \min_i \textcolor{black}{\gamma}_i$, {$\textcolor{black}{\gamma}_i = \frac{\mu_i L_i}{\mu_i + L_i}$} and $\delta>0$. In particular, if we set $\delta = \frac{c_2}{2(1-\alpha c_2)}$ such that $c_1 = \sqrt{1 - \frac{\alpha c_2}{2}} \in (0,1)$, then for $k=0,1,2,\ldots$
 \begin{align*}
 	\| \bar{x}_{k} - x^\star\| \leq c_1^k \| \bar{x}_{0} - x^\star\| + \textcolor{black}{\frac{LD\beta^t}{c_2} \sqrt{2(2-\alpha c_2)}}.
\end{align*}
\end{thm}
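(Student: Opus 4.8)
The plan is to mirror, essentially line for line, the proof of Theorem~\ref{thm:dgdt_bound_dist_min}, the only difference being which "deviation from the mean" bound is plugged in. The key structural observation is that, by Eq.~\eqref{eq:twt_errors} together with the identity $\bar{x}_k = \textbf{Z}^t\bar{y}_k = \bar{y}_k$ (which holds because $\textbf{W}^t$ is doubly-stochastic, so the average is preserved by $\textbf{Z}^t$), the averaged iterate of \ndgdt obeys the inexact gradient recursion $\bar{x}_{k+1} = \bar{x}_k - \alpha g_k$ for problem~\eqref{eq:prob_bar}, where $g_k$ is the average of the local gradients and $\bar{g}_k$ is the exact gradient of $\bar f$ at $\bar x_k$. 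Relative to DGD$^t$, the consensus error is now governed by Lemma~\ref{lem:twt_bound_dev_mean}, which gives $\|g_k-\bar g_k\|\le \beta^t D L$ with $D$ the iterate bound from Lemma~\ref{lem:twt_bound_iterates}; this decays \emph{geometrically} in $t$, rather than the $\frac{\alpha D L}{1-\beta^t}$ of Lemma~\ref{lem:dgdt_bound_dev_mean}, and is precisely what replaces the $1/(1-\beta^t)^2$ factor by $\beta^{2t}$ in the statement.

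The main chain of inequalities is as follows. Write $\bar x_k - x^\star - \alpha g_k = (\bar x_k - x^\star - \alpha\bar g_k) + \alpha(\bar g_k - g_k)$, expand the square, and apply Young's inequality $\pm 2a^Tb \le \delta^{-1}\|a\|^2 + \delta\|b\|^2$ ($\delta>0$) to the cross term to obtain
\begin{align*}
\|\bar x_{k+1}-x^\star\|^2 \le (1+\alpha\delta)\,\|\bar x_k - x^\star - \alpha\bar g_k\|^2 + \alpha(\alpha+\delta^{-1})\,\|\bar g_k - g_k\|^2 .
\end{align*}
Since $\bar f$ is $\mu_{\bar f}$-strongly convex with $L_{\bar f}$-Lipschitz gradient (as recorded just before the theorem), \cite[Theorem 2.1.12, Chapter 2]{nesterov2013introductory} gives $\bar g_k^T(\bar x_k - x^\star) \ge \tfrac{c_2}{2}\|\bar x_k - x^\star\|^2 + \tfrac{c_4}{2}\|\bar g_k\|^2$, so expanding $\|\bar x_k - x^\star - \alpha\bar g_k\|^2$ and using $\alpha\le c_4$ to discard the nonpositive $\alpha(\alpha-c_4)\|\bar g_k\|^2$ term yields $\|\bar x_k - x^\star - \alpha\bar g_k\|^2 \le (1-\alpha c_2)\|\bar x_k - x^\star\|^2$. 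Substituting this and $\|\bar g_k - g_k\|^2 \le \beta^{2t}D^2L^2$ from \eqref{eq:twt_lem2_p3} produces the one-step contraction
\begin{align*}
\|\bar x_{k+1}-x^\star\|^2 \le c_1^2\,\|\bar x_k-x^\star\|^2 + c_3^2\beta^{2t},
\end{align*}
with $c_1^2 = (1+\alpha\delta)(1-\alpha c_2)$ and $c_3^2 = \alpha(\alpha+\delta^{-1})D^2L^2$.

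Finally I would unroll the recursion. Because $\alpha\le c_4\le 1/c_2$ (the last bound being $c_2c_4 = 4\mu_{\bar f}L_{\bar f}/(\mu_{\bar f}+L_{\bar f})^2\le 1$ by AM--GM), the factor $c_1^2$ is nonnegative and $\delta>0$ is admissible; choosing $\delta = \tfrac{c_2}{2(1-\alpha c_2)}$ gives $c_1^2 = 1-\tfrac{\alpha c_2}{2}\in(0,1)$, hence $1-c_1^2 = \tfrac{\alpha c_2}{2}$, so geometric summation yields $\|\bar x_k-x^\star\|^2 \le c_1^{2k}\|\bar x_0-x^\star\|^2 + \tfrac{c_3^2\beta^{2t}}{1-c_1^2}$. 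Using $\alpha+\delta^{-1} = \tfrac{2-\alpha c_2}{c_2}$ and taking square roots gives $\tfrac{c_3\beta^t}{\sqrt{1-c_1^2}} = \tfrac{LD\beta^t}{c_2}\sqrt{2(2-\alpha c_2)}$, which is the advertised limiting term; combined with $(a+b)^{1/2}\le a^{1/2}+b^{1/2}$ this gives the final inequality. I do not expect a genuine obstacle — the argument is a controlled modification of the DGD$^t$ analysis — but the step demanding the most care is the constant bookkeeping ($\delta$, $c_1$, $c_3$) and checking the two conditions on the steplength: $\alpha<1/L$ so that Lemmas~\ref{lem:twt_bound_iterates}--\ref{lem:twt_bound_dev_mean} apply and $D$ is finite, and $\alpha\le c_4$ so that the $\|\bar g_k\|^2$ term can be dropped and $c_1\in(0,1)$.
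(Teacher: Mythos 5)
Your proposal is correct and follows essentially the same route as the paper's proof: the inexact-gradient recursion $\bar{x}_{k+1}=\bar{x}_k-\alpha g_k$, Young's inequality on the cross term, the Nesterov strong-convexity bound to get the $(1-\alpha c_2)$ contraction, the $\beta^t DL$ consensus-error bound from Lemma \ref{lem:twt_bound_dev_mean}, and geometric unrolling with $\delta=\frac{c_2}{2(1-\alpha c_2)}$. Your constant bookkeeping (including $\alpha+\delta^{-1}=\frac{2-\alpha c_2}{c_2}$ yielding the final $\frac{LD\beta^t}{c_2}\sqrt{2(2-\alpha c_2)}$ term) checks out.
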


\begin{proof} Following the analysis of Theorem \ref{thm:dgdt_bound_dist_min}, and
using the definitions of the $\bar{x}_k$ and $g_k$, and \eqref{eq:twt_errors}, we have
\begin{align}		
	\| \bar{x}_{k+1} - x^\star \|^2 &= \| \bar{x}_{k} - x^\star - \alpha g_k\|^2 \nonumber\\
		%& = \| \bar{x}_{k} - x^\star - \alpha \bar{g}_k + \alpha (\bar{g}_k - g_k)\|^2 \nonumber\\
		%& = \| \bar{x}_{k} - x^\star - \alpha \bar{g}_k \|^2 + \alpha^2 \| \bar{g}_k - {g}_k \|^2 \nonumber\\
		%& \quad+ 2\alpha (\bar{g}_k - g_k)^T(\bar{x}_{k} - x^\star - \alpha \bar{g}_k)\nonumber\\
		& \leq (1 + \alpha \delta)\| \bar{x}_{k} - x^\star - \alpha \bar{g}_k\|^2 \nonumber\\
		& \quad+ \alpha(\alpha + \delta^{-1})\| \bar{g}_k - {g}_k \|^2, \label{eq:twt_thm1}
\end{align}
where the last inequality follows from the fact that for any vectors $a$ and $b$, $\pm 2a^Tb \leq \delta^{-1} \| a\|^2 + \delta \| b \|^2$, for $\delta>0$.

%\begin{align}		
%	\| \bar{x}_{k+1} - x^\star \|^2 &\leq (1 + \alpha \delta)\| \bar{x}_{k} - x^\star - \alpha \bar{g}_k\|^2 \nonumber\\\
%	& \quad+ \alpha(\alpha + \delta^{-1})\| \bar{g}_k - {g}_k \|^2. \label{eq:twt_thm1}
%\end{align}
We now bound the quantity $\| \bar{x}_{k} - x^\star - \alpha \bar{g}_k\|^2$ as
\begin{align}		\label{eq:neardgdt_thm_bound_distance}
	\| &\bar{x}_{k} - x^\star - \alpha \bar{g}_k\|^2 \nonumber \\ &= \|  \bar{x}_{k} - x^\star \|^2 + \alpha^2 \|  \bar{g}_k\|^2 - 2(\bar{x}_{k} - x^\star)^T(\alpha \bar{g}_k) \nonumber \\
	%& \leq \|  \bar{x}_{k} - x^\star \|^2 + \alpha^2 \| \bar{g}_k\|^2 - \alpha c_4 \| \bar{g}_k\|^2 - \alpha c_4^{-1} \| \bar{x}_{k} - x^\star \|^2 \nonumber \\
	& \leq \|  \bar{x}_{k} - x^\star \|^2 + \alpha^2 \| \bar{g}_k\|^2 - \alpha c_4 \| \bar{g}_k\|^2 - \alpha c_2 \| \bar{x}_{k} - x^\star \|^2 \nonumber \\
	& = (1 - \alpha c_2)\|  \bar{x}_{k} - x^\star \|^2 + \alpha(\alpha -  c_4)\| \bar{g}_k\|^2 \nonumber\\
	& \leq (1 - \alpha c_2)\|  \bar{x}_{k} - x^\star \|^2,
\end{align}
{where the first inequality follows from \cite[Theorem 2.1.12, Chapter 2]{nesterov2013introductory}, and in the last inequality we dropped the term $\alpha(\alpha -  c_4)\| \bar{g}_k\|^2$, since $\alpha\leq c_4$ and $\alpha(\alpha -  c_4)\| \bar{g}_k\|^2 \leq 0$.}
%\rb{@Albert: Check if it is Theorem 2.1.12 or Theorem 2.1.11. I don't have access to physical copy. In the online version, it is Theorem 2.1.11. Also, the bound is loose. We are removing the factor $2$ there. Therefore, we should define $c_2 = 2c_2$ and $c_4 = 2c_4$ and $c_2c_4 = \frac{4\mu_{\bar{f}}L_{\bar{f}}}{\mu_{\bar{f}} + L_{\bar{f}}} \leq 1$. I am not sure why Wotao missed the factor 2. But, I believe $2$ should be there as the condition on $\alpha$ should be $\alpha \leq \frac{2}{\mu_{\bar{f}} + L_{\bar{f}}}$ (See Theorem 2.1.14). No other change is necessary. Also, it would have been more efficient if we have considered the terms instead of their squares as we are again loosing by a factor of square root 2. But we can ignore this.}
%where the first  inequality follows from the fact that for any vectors $a$ and $b$, $\pm 2a^Tb \leq c_4^{-1} \| a\|^2 + c_4 \| b \|^2$,  ($c_4>0$), the second inequality is due to the fact that for any $\mu_{\bar{f}} , L_{\bar{f}} > 0$, $c_2 c_4 = \frac{\mu_{\bar{f}} L_{\bar{f}}}{(\mu_{\bar{f}} + L_{\bar{f}})^2} < 1$, and thus $c_4 < \frac{1}{c_2}$, and, in the last inequality we dropped the term $\alpha(\alpha -  c_4)\| \bar{g}_k\|^2$, since $\alpha\leq c_4$ and $\alpha(\alpha -  c_4)\| \bar{g}_k\|^2 \leq 0$. 
Combining \eqref{eq:twt_thm1}, \eqref{eq:neardgdt_thm_bound_distance} and using \eqref{eq:twt_lem2_p3},
\begin{align}			\label{eq:twt_onestep}
	\| \bar{x}_{k+1} - x^\star \|^2 & \leq (1 + \alpha \delta)(1 - \alpha c_2)\|  \bar{x}_{k} - x^\star \|^2 \nonumber\\
	& \quad + \alpha(\alpha + \delta^{-1})  L^2 D^2 {\beta^{2{t}}}.
\end{align}

Using the definitions of $c_1$ and $c_3$, by recursive application of \eqref{eq:twt_onestep}, we have
\begin{align*}
	\| \bar{x}_{k} - x^\star \|^2 &\leq c_1^{2k} \| \bar{x}_{0} - x^\star\|^2 + \frac{c_3^2}{(1-c_1^2)}\beta ^{2t},
\end{align*}
and so
\begin{align*}
	\| \bar{x}_{k} - x^\star \| &\leq c_1^{k} \| \bar{x}_{0} - x^\star\| + \frac{c_3}{\sqrt{1-c_1^2}}\beta^t.
\end{align*}
If $\delta = \frac{c_2}{2(1-\alpha c_2)}$, then
\begin{align*}
	c_1^2 = 1 - \frac{\alpha c_2}{2}<1,
\end{align*}
and
\begin{align*}
	\frac{c_3}{\sqrt{1-c_1^2}}\beta^t = \textcolor{black}{\frac{LD\beta^t}{c_2} \sqrt{2(2-\alpha c_2)}},
\end{align*}
%\rb{This is not true. $\frac{c_3}{\sqrt{1-c_1^2}} = LD\sqrt{2(\alpha + \delta^{-1})/c_2}$. We should look at this more carefully.}
which completes the proof.
\end{proof}

Theorem \ref{thm:twt_bound_dist_min} show that the average of the iterates generated by the \ndgdt method converge to a neighborhood of the optimal solution whose size is defined by the steplength, the second largest eigenvalue of $\textbf{W}$ and the number of consensus steps. We now provide a convergence result for the local agent estimates of the \ndgdt method.

\begin{cor}		\label{cor:twt_bound_dist_min}
\textbf{(Local agent convergence)} Suppose Assumptions \ref{assm:Lip} \& \ref{assm:Strong} hold, and let the steplength satisfy
\begin{align*}
		\alpha \leq \min \left \{ {\frac{1}{L}, c_4} \right \}.
\end{align*}
where $L = \max_{i}L_i$ and $c_4 = \frac{{2}}{\mu_{\bar{f}} +L_{\bar{f}}}$. Then, starting from $x_{i,0} = s_0$ or $y_{i,0} = s_0$ ($1\leq i \leq n$) for $k=0,1,\ldots$
\begin{align*}
	\| x_{i,k} - x^\star \| \leq c_1^k \| x_0 -  x^\star\| + \frac{c_3}{\sqrt{1-c_1^2}}\beta^t + \beta^{{t}} D.
\end{align*}
Moreover, the local iterates $y_{i,k}$ are bounded by
\begin{align*}
	\| y_{i,k} - x^\star \| \leq c_1^k \| x_0 -  x^\star\| + \frac{c_3}{\sqrt{1-c_1^2}}\beta^t + \beta^{{t}} D + 2D.
\end{align*}
%\textcolor{black}{AB: There is an issue with this Corollary.}
\end{cor}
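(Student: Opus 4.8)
The plan is to mimic exactly the argument used for Corollary \ref{cor:dgdt_bound_dist_min}: decompose the error of each local estimate into a ``centralized'' part (distance of the running average to the optimum) and a ``consensus'' part (deviation of the local iterate from the average), and then invoke the two results already established for \ndgdt. Concretely, for the $x_{i,k}$ iterates I would first write, by the triangle inequality,
\begin{align*}
	\| x_{i,k} - x^\star \| \leq \| \bar{x}_k - x^\star \| + \| x_{i,k} - \bar{x}_k \|.
\end{align*}
The first term is controlled by Theorem \ref{thm:twt_bound_dist_min} with the stated choice $\delta = \frac{c_2}{2(1-\alpha c_2)}$, giving $\| \bar{x}_k - x^\star \| \leq c_1^k \| \bar{x}_0 - x^\star \| + \frac{c_3}{\sqrt{1-c_1^2}}\beta^t$; since the agents are initialized at a common point $s_0$ we have $\bar{x}_0 = x_0$. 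The second term is exactly the bound \eqref{eq:twt_lem2_p1} from Lemma \ref{lem:twt_bound_dev_mean}, namely $\| x_{i,k} - \bar{x}_k \| \leq \beta^t D$. Adding the two bounds yields the claimed estimate.

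For the $y_{i,k}$ iterates I would proceed identically, using
\begin{align*}
	\| y_{i,k} - x^\star \| \leq \| \bar{y}_k - x^\star \| + \| y_{i,k} - \bar{y}_k \|.
\end{align*}
Here the key observation — already recorded in the text preceding Theorem \ref{thm:twt_bound_dist_min} — is that $\bar{x}_k = \textbf{Z}^t \bar{y}_k = \bar{y}_k$ because $\textbf{W}^t$ is doubly stochastic, so $\| \bar{y}_k - x^\star \| = \| \bar{x}_k - x^\star \|$ and the same bound from Theorem \ref{thm:twt_bound_dist_min} applies verbatim. The deviation term is then bounded by \eqref{eq:twt_lem2_p4}, $\| y_{i,k} - \bar{y}_k \| \leq \beta^t D + 2D$, and summing gives the second inequality.

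There is essentially no hard step here; the corollary is a direct combination of Theorem \ref{thm:twt_bound_dist_min} and Lemma \ref{lem:twt_bound_dev_mean} via the triangle inequality. The only points that require a moment's care are bookkeeping ones: recognizing that $\bar{x}_0 = x_0$ under the common initialization so that $c_1^k \| \bar{x}_0 - x^\star\|$ can be written as $c_1^k \| x_0 - x^\star\|$, and recognizing the identity $\bar{y}_k = \bar{x}_k$ so that the centralized error bound can be reused for the intermediate iterates. No new estimates or case analysis are needed.
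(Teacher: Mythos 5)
Your proposal is correct and matches the paper's own proof essentially verbatim: triangle inequality, Theorem \ref{thm:twt_bound_dist_min} for the average error, Lemma \ref{lem:twt_bound_dev_mean} for the deviation terms, and the identity $\bar{y}_k = \bar{x}_k$ to handle the $y_{i,k}$ iterates. No differences worth noting.
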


\begin{proof} Using the results from Lemma \ref{lem:twt_bound_dev_mean} and Theorem \ref{thm:twt_bound_dist_min},
\begin{align*}
	\| x_{i,k} - x^\star \| &\leq \| \bar{x}_k - x^\star \| + \| x_{i,k} - \bar{x}_k \| \\
		& \leq c_1^k \| x_0 - x^\star\| + \frac{c_3}{\sqrt{1-c_1^2}}\beta^t + \beta^{{t}} D.
\end{align*}

Following the same approach for the local iterates $y_{i,k}$, we have
\begin{align*}
	\| y_{i,k} - x^\star \| &\leq \| \bar{y}_k - x^\star \| + \| y_{i,k} - \bar{y}_k \| \\
	&= \| \bar{x}_k - x^\star \| + \| y_{i,k} - \bar{y}_k \| \\
		& \leq c_1^k \| x_0 - x^\star\| + \frac{c_3}{\sqrt{1-c_1^2}}\beta^t + \beta^{{t}} D + 2D,
\end{align*}
where the equality is due to $\bar{x}_k = \textbf{Z}^t\bar{y}_k = \bar{y}_k$.
\end{proof}

The main takeaway of Theorem \ref{thm:twt_bound_dist_min} is that the iterates generated by the \ndgdt method converge at a linear rate to a neighborhood of the optimal solution, where the neighborhood is defined as,
\begin{align*}
	c_3^2\beta^{2{t}}.
\end{align*}
A natural question to ask is whether there is a way to increase the number of consensus steps at every iteration in order to eliminate the error term. In the next theorem, we show that this can actually be achieved, and that the \ndgdp method with an appropriate increase in the number of consensus steps converges at an R-Linear rate  to the solution. Before we proceed, we should mention that the results of Lemmas \ref{lem:twt_bound_iterates} and \ref{lem:twt_bound_dev_mean} extend naturally to the case with increasing number of consensus steps.

\begin{thm} \label{thm:tw+_bound_dist_min}
\textbf{(Bounded distance to minimum)} Suppose Assumptions \ref{assm:Lip} \& \ref{assm:Strong} hold, and let the steplength satisfy
\begin{align*}
		\alpha \leq \min \left \{ \frac{1}{L}, c_4 \right \}
\end{align*}
where $L = \max_{i}L_i$ and $c_4 = \frac{{2}}{\mu_{\bar{f}} +L_{\bar{f}}}$. Then, starting from $x_{i,0} = s_0$ or $y_{i,0} = s_0$ ($1\leq i \leq n$), for all $k=0,1,2,\ldots$
\begin{align*}
	\| \bar{x}_{k+1} - x^\star\|^2 \leq c_1^2 \| \bar{x}_{k} - x^\star \|^2 + {c_3^2\beta^{2{t(k)}}},
\end{align*}
where
\begin{align*}
	c_1^2 = 1 - \alpha c_2 + \alpha \delta - \alpha^2 \delta c_2, \;\; c_2 = \frac{{2}\mu_{\bar{f}} L_{\bar{f}}}{\mu_{\bar{f}} + L_{\bar{f}}},\\
	c_3^2 = \alpha(\alpha + \delta^{-1}){ D^2 L^2}, \;\; D = \| \textbf{y}_0 - \textbf{u}^\star\| + \frac{\nu + {4}}{\nu}\|\textbf{u}^\star \|,
\end{align*}
$x^\star$ is the optimal solution of \eqref{eq:cons_prob1}, $\textbf{u}^\star$ is the optimal solution of \eqref{eq:tw_penalty}, $\nu = 2\alpha \textcolor{black}{\gamma}$, $\textcolor{black}{\gamma} = \min_i \textcolor{black}{\gamma}_i$, {$\textcolor{black}{\gamma}_i = \frac{\mu_i L_i}{\mu_i + L_i}$} and $\delta>0$. \ew{Moreover, for any strictly increasing sequence $\{t(k)\}_k$, with $\lim_{k \rightarrow \infty} t(k) \rightarrow \infty$, the iterates produced by the \ndgdp algorithm converge to $x^\star$.  }% In particular, if we set $\delta = \frac{c_2}{2(1-\alpha c_2)}$ such that $c_1 = \sqrt{1 - \frac{\alpha c_2}{2}} \in (0,1)$, then for $k=0,1,2,\ldots$
% \begin{align*}
% 	\| \bar{x}_{k} - x^\star\| \leq c_1^k \| \bar{x}_{0} - x^\star\| + \mathcal{O}( \alpha\beta^{t(k)} ).
%\end{align*}
\end{thm}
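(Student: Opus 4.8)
The plan is to reuse the \ndgdt analysis almost verbatim, tracking only the single place where the iteration index enters the consensus exponent, and then to add one elementary sequential‑limit argument for the convergence claim. First I would record that Lemmas \ref{lem:twt_bound_iterates} and \ref{lem:twt_bound_dev_mean} remain valid with a time‑varying exponent, because their derivations invoke only the exponent‑independent facts $\|\textbf{Z}^{t(k)}\|\leq 1$, $\|I-\textbf{Z}^{t(k)}\|\leq 2$ and $\|\textbf{W}^{t(k)}-\frac{1}{n}1_n1_n^T\|\leq\beta^{t(k)}$. Consequently the iterates stay bounded by the \emph{same} constant $D=\|\textbf{y}_0-\textbf{u}^\star\|+\frac{\nu+4}{\nu}\|\textbf{u}^\star\|$, and at iteration $k$ one gets the analog of \eqref{eq:twt_lem2_p1}, namely $\|x_{i,k}-\bar x_k\|\leq\beta^{t(k)}D$, hence $\|g_k-\bar g_k\|\leq\beta^{t(k)}DL$.

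Next, since the averaged recursion \eqref{eq:twt_errors}, $\bar x_{k+1}=\bar y_{k+1}=\bar x_k-\alpha g_k$, is unchanged, I would copy the chain \eqref{eq:twt_thm1}--\eqref{eq:twt_onestep} line for line, the sole modification being to substitute the bound $\|g_k-\bar g_k\|\leq\beta^{t(k)}DL$ wherever \eqref{eq:twt_lem2_p3} was used. This produces
\[
\|\bar x_{k+1}-x^\star\|^2\leq(1+\alpha\delta)(1-\alpha c_2)\,\|\bar x_k-x^\star\|^2+\alpha(\alpha+\delta^{-1})\,L^2D^2\,\beta^{2t(k)},
\]
which is exactly the claimed recursion with $c_1,c_2,c_3$ as defined, and the choice $\delta=c_2/(2(1-\alpha c_2))$ again yields $c_1^2=1-\alpha c_2/2\in(0,1)$.

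For the convergence assertion, set $a_k=\|\bar x_k-x^\star\|^2$ and $b_k=c_3^2\beta^{2t(k)}$, so that $a_{k+1}\leq c_1^2 a_k+b_k$. Since $\{t(k)\}$ is strictly increasing with $t(k)\to\infty$ and $0<\beta<1$, we have $b_k\to 0$. Unrolling gives $a_k\leq c_1^{2k}a_0+\sum_{j=0}^{k-1}c_1^{2(k-1-j)}b_j$; the first term vanishes since $c_1<1$, and the convolution term is null by the standard split of the sum at $j\leq\lfloor k/2\rfloor$ (head $\leq(\sup_j b_j)\,c_1^{2(k-1-\lfloor k/2\rfloor)}/(1-c_1^2)\to 0$) and $j>\lfloor k/2\rfloor$ (tail $\leq(\sup_{j>\lfloor k/2\rfloor}b_j)/(1-c_1^2)\to 0$). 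Hence $\bar x_k\to x^\star$, and then the triangle inequality with the deviation bound above gives $\|x_{i,k}-x^\star\|\leq\|\bar x_k-x^\star\|+\beta^{t(k)}D\to 0$ for every agent $i$, because $D$ is a fixed constant and $t(k)\to\infty$.

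The main point to be careful about -- essentially the only non‑mechanical step -- is confirming that the constant $D$ of Lemma \ref{lem:twt_bound_iterates} is genuinely independent of $k$ even as the number of consensus steps grows, since that is what legitimizes $\beta^{t(k)}D\to 0$; this in turn amounts to checking that the recursive estimate in that lemma only ever uses $\|\textbf{Z}^{t(k)}\|\leq 1$ and $\|I-\textbf{Z}^{t(k)}\|\leq 2$, which hold uniformly in the exponent. Everything else is a transcription of the \ndgdt proof with $t$ replaced by $t(k)$, together with the elementary null‑convolution fact above.
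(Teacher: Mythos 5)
Your proposal is correct and follows essentially the same route as the paper: the paper's own proof simply states that the argument of Theorem \ref{thm:twt_bound_dist_min} carries over with $t$ replaced by $t(k)$ and that convergence follows because $\beta^{2t(k)}\to 0$. You are in fact more careful than the paper on two points --- verifying that $D$ from Lemma \ref{lem:twt_bound_iterates} is uniform in $k$ since the bounds $\|\textbf{Z}^{t(k)}\|\leq 1$ and $\|I-\textbf{Z}^{t(k)}\|\leq 2$ hold for every exponent, and supplying the tail-splitting argument showing that the convolution $\sum_{j}c_1^{2(k-1-j)}\beta^{2t(j)}$ vanishes --- both of which the paper leaves implicit.
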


\begin{proof} The proof of Theorem \ref{thm:tw+_bound_dist_min} is exactly the same as that of Theorem \ref{thm:twt_bound_dist_min}, with the difference that the constant number of consensus steps $t$ is replaced by a varying number of consensus steps $t(k)$. \ew{The convergence result follows from the fact that
	\[\lim_{k\to\infty}\beta^{2t(k)}=0,\] for any increasing sequence $\{t(k)\}$ with $\lim_{k \rightarrow \infty} t(k) \rightarrow \infty$, and thus the size of the error neighborhood $\mathcal{O}( \beta^{2t(k)})$ shrinks to 0.}
\end{proof}

\begin{thm} \label{thm:tw+_rlinear}
\textbf{(R-Linear convergence of the \ndgdp method)} Suppose Assumptions \ref{assm:Lip} \& \ref{assm:Strong} hold, let the steplength satisfy
\begin{align*}
		\alpha \leq \min \left \{ \frac{1}{L}, c_4 \right \}
\end{align*}
where $L = \max_{i}L_i$ and $c_4 = \frac{{2}}{\mu_{\bar{f}} +L_{\bar{f}}}$, and let ${{t(k) =k}}$. Then, starting from $x_{i,0} = s_0$ or $y_{i,0} = s_0$ ($1\leq i \leq n$), the iterates generated by the \ndgdp method \eqref{eq:tw+}-\eqref{eq:tw+_step} converge at an R-Linear rate to the solution. Namely,
\begin{align}
	\| \bar{x}_k - x^\star \| \leq C \rho^k
\end{align}
for all $k=0,1,2,...$, where
\begin{align*}
C = \max \left\{ \| \bar{x}_0 -x^\star \|, \frac{2c_3}{ \sqrt{\alpha c_2}}\right\}, \quad \rho = \max\left\{ \beta, \sqrt{1-\frac{\alpha c_2}{4}}\right\},
\end{align*}
and $c_1$, $c_2$, $c_3$ and $c_4$ are given in Theorem \ref{thm:tw+_bound_dist_min}.
\end{thm}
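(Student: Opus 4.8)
The plan is to establish the $R$-linear rate by induction on $k$, using the one-step recursion from Theorem~\ref{thm:tw+_bound_dist_min} with $t(k)=k$. The base case $k=0$ holds because $C \geq \| \bar{x}_0 - x^\star\|$ by definition, so $\| \bar{x}_0 - x^\star\| \leq C = C\rho^0$. For the inductive step, assume $\| \bar{x}_k - x^\star\| \leq C\rho^k$. Theorem~\ref{thm:tw+_bound_dist_min} (with the choice $\delta = \tfrac{c_2}{2(1-\alpha c_2)}$, so that $c_1^2 = 1 - \tfrac{\alpha c_2}{2}$) gives
\begin{align*}
	\| \bar{x}_{k+1} - x^\star\|^2 \leq \left(1 - \tfrac{\alpha c_2}{2}\right)\| \bar{x}_k - x^\star\|^2 + c_3^2 \beta^{2k} \leq \left(1 - \tfrac{\alpha c_2}{2}\right) C^2\rho^{2k} + c_3^2\beta^{2k}.
\end{align*}

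Next I would bound each term by a multiple of $C^2\rho^{2(k+1)}$. For the first term, since $\rho \geq \sqrt{1-\tfrac{\alpha c_2}{4}}$ we have $\rho^2 \geq 1 - \tfrac{\alpha c_2}{4}$, and the gap $\left(1-\tfrac{\alpha c_2}{4}\right) - \left(1-\tfrac{\alpha c_2}{2}\right) = \tfrac{\alpha c_2}{4}$ is exactly the slack available to absorb the noise term; concretely $\left(1-\tfrac{\alpha c_2}{2}\right)C^2\rho^{2k} = \rho^{2(k+1)}C^2 \cdot \tfrac{1-\alpha c_2/2}{\rho^2} \leq \rho^{2(k+1)}C^2\left(1 - \tfrac{\alpha c_2/4}{\rho^2}\right)$, using $\rho^2 \geq 1-\tfrac{\alpha c_2}{4} \geq 1 - \tfrac{\alpha c_2}{2}$ so the ratio is at most $1$, and more precisely the difference $\rho^{2(k+1)}C^2 - (1-\tfrac{\alpha c_2}{2})C^2\rho^{2k} \geq C^2\rho^{2k}\left(\rho^2 - 1 + \tfrac{\alpha c_2}{2}\right) \geq C^2\rho^{2k}\cdot\tfrac{\alpha c_2}{4}$. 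For the second term, since $\rho \geq \beta$ we have $\beta^{2k} \leq \rho^{2k}$, and using $C \geq \tfrac{2c_3}{\sqrt{\alpha c_2}}$, i.e.\ $c_3^2 \leq \tfrac{\alpha c_2}{4}C^2$, we get $c_3^2\beta^{2k} \leq \tfrac{\alpha c_2}{4}C^2\rho^{2k}$. Combining, $c_3^2\beta^{2k} \leq \tfrac{\alpha c_2}{4}C^2\rho^{2k} \leq \rho^{2(k+1)}C^2 - (1-\tfrac{\alpha c_2}{2})C^2\rho^{2k}$, which rearranges to exactly $\| \bar{x}_{k+1} - x^\star\|^2 \leq C^2\rho^{2(k+1)}$, completing the induction after taking square roots.

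The argument is essentially a routine induction, so I do not anticipate a serious obstacle; the one point requiring care is the bookkeeping that shows the two constants $C$ and $\rho$ are chosen precisely so that the ``extra'' contraction in $\rho^2$ beyond $1-\tfrac{\alpha c_2}{2}$ is large enough to swallow the error term $c_3^2\beta^{2k}$. This hinges on the elementary inequality $\rho^2 \geq 1 - \tfrac{\alpha c_2}{4}$ together with $\beta \leq \rho$, and on the definition of $C$ being large enough relative to $c_3/\sqrt{\alpha c_2}$. One should also double-check that $\alpha c_2 < 1$ (which follows from $\alpha \leq c_4 < \tfrac{1}{c_2}$, as noted in the proof of Theorem~\ref{thm:dgdt_bound_dist_min}) so that $\rho \in (0,1)$ and the rate is genuinely contractive. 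Finally, I would remark that since $\| \bar{x}_k - x^\star\| = \| \bar{y}_k - x^\star\|$, the same $R$-linear bound transfers to the $\bar{y}_k$ iterates, and the local-agent versions follow by adding the $\mathcal{O}(\beta^{t(k)}D) = \mathcal{O}(\beta^k D)$ deviation-from-mean terms from Lemma~\ref{lem:twt_bound_dev_mean}, which decay at rate $\beta \leq \rho$ and hence preserve the overall $R$-linear rate.
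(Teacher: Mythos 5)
Your proposal is correct and follows essentially the same route as the paper: an induction on $k$ whose base case is handled by $C \geq \|\bar{x}_0 - x^\star\|$, and whose inductive step applies the one-step bound of Theorem~\ref{thm:tw+_bound_dist_min} with $c_1^2 = 1 - \tfrac{\alpha c_2}{2}$, then absorbs the error term via $\beta \leq \rho$, $c_3^2 \leq \tfrac{\alpha c_2}{4}C^2$, and $\rho^2 \geq 1 - \tfrac{\alpha c_2}{4}$. The only difference is cosmetic bookkeeping (you subtract terms where the paper factors out $(C\rho^k)^2$), and your closing remarks on $\alpha c_2 < 1$ and on the local iterates are accurate but not needed for the stated result.
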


\begin{proof} We prove the result by induction. By the definitions of $C$ and $\rho$ the base case $k=0$ holds. Assume that the result is true for the $k^{th}$ iteration, and consider the $(k+1)^{th}$ iteration. Using the result of Theorem \ref{thm:tw+_bound_dist_min}, we have
\begin{align*}
	\| \bar{x}_{k+1} - x^\star \|^2 &\leq c_1^2 \| \bar{x}_{k} - x^\star \|^2 + c_3^2\beta^{2{k}}\\
	& \leq c_1^2 \left(C\rho^{k}\right)^2 + c_3^2\beta^{2k}\\
	& =  \left(C\rho^{k}\right)^2 \left[ c_1^2 + \frac{c_3^2 \beta^{2k}}{(C\rho^{k})^2}\right]\\
	& \leq \left(C\rho^{k}\right)^2 \left[c_1^2 + \frac{c_3^2}{C^2}\right]\\
	& \leq \left(C\rho^{k}\right)^2\left[ 1- \frac{\alpha c_2}{2} + \frac{\alpha c_2}{4}\right]\\
	& \leq \left(C\rho^{k+1} \right)^2
\end{align*}
where the third inequality is due to the fact that $\rho\geq \beta$, the fourth inequality is due to the definitions of $c_1$, relations $C\geq \frac{2c_3}{ \sqrt{\alpha c_2}}$ and $\alpha \delta - \alpha^2 \delta c_2\leq 0$, and the last inequality is due to the definition of $\rho$, where $\rho \geq \sqrt{1-\frac{\alpha c_2}{4}}$.
\end{proof}

Theorem \ref{thm:tw+_rlinear} illustrates that when the number of consensus steps is increased at an appropriate rate ($t(k)=k$) then the \ndgdp method converges to the solution at an R-Linear rate. Going back to Corollary \ref{cor:twt_bound_dist_min}, the result implies that the local iterates $x_{i,k}$ generated by \ndgdp method converge to the optimal solution, whereas the local iterates $y_{i,k}$ do not.

We now investigate the work complexity of the \ndgdp method. By work complexity we mean the total amount of work (gradient evaluations and communication steps) required to get an $\epsilon$-accurate solution ($\| \bar{x}_k - x^\star\| \leq \epsilon$).

\begin{cor} \textbf{(Work Complexity)} If the conditions in Theorem \ref{thm:tw+_rlinear} are satisfied, then the work complexity (total number of gradient evaluations $\tau_g$ and rounds of communications $\tau_c$) to get an $\epsilon$-accurate solution, that is $\| \bar{x}_k - x^\star\| \leq \epsilon$, for the \ndgdp algorithm are given as follows,
\begin{align*}
	\tau_g = \mathcal{O}\left(\log \left(\sfrac{1}{\epsilon}\right)\right),\\
	\tau_c = \mathcal{O}\left(\left(\log \left(\sfrac{1}{\epsilon}\right)\right)^2\right).
\end{align*}
\end{cor}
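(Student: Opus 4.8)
The plan is to read off the iteration count from the R-Linear rate of Theorem \ref{thm:tw+_rlinear}, and then translate that iteration count into gradient and communication totals using the per-iteration workload of the \ndgdp method with $t(k)=k$.

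First I would bound $\tau_g$. By Theorem \ref{thm:tw+_rlinear} we have $\| \bar{x}_k - x^\star \| \leq C\rho^k$ with $\rho \in (0,1)$, so a sufficient condition for $\| \bar{x}_k - x^\star \| \leq \epsilon$ is $C\rho^k \leq \epsilon$, i.e.
\begin{align*}
	k \geq \frac{\log(C/\epsilon)}{\log(1/\rho)}.
\end{align*}
Since each iteration of \ndgdp performs exactly one gradient evaluation [cf.\ Eq.\ \eqref{eq:tw+_step}], the number of gradient evaluations needed is $\tau_g = \left\lceil \log(C/\epsilon)/\log(1/\rho)\right\rceil = \mathcal{O}\!\left(\log(\sfrac{1}{\epsilon})\right)$, because $C$ and $\rho$ are fixed constants independent of $\epsilon$.

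Next I would bound $\tau_c$. At iteration $k$ the method performs $t(k)=k$ nested consensus (communication) steps, so after $\tau_g$ iterations the total number of communication rounds is
\begin{align*}
	\tau_c = \sum_{k=1}^{\tau_g} t(k) = \sum_{k=1}^{\tau_g} k = \frac{\tau_g(\tau_g+1)}{2} = \mathcal{O}\!\left(\tau_g^2\right) = \mathcal{O}\!\left(\left(\log(\sfrac{1}{\epsilon})\right)^2\right),
\end{align*}
where the last equality substitutes the bound on $\tau_g$ established above. This proves both claims.

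There is no real obstacle here: the argument is pure bookkeeping, with the only substantive input being the R-Linear rate already proved in Theorem \ref{thm:tw+_rlinear}. The one point worth making carefully is that $C$ and $\rho$ do not depend on $\epsilon$ (they depend only on the problem data, the steplength, and $\beta$), so that $\log(C/\epsilon) = \mathcal{O}(\log(1/\epsilon))$ and the quadratic-in-iterations communication count indeed becomes $\mathcal{O}((\log(1/\epsilon))^2)$.
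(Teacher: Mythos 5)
Your proposal is correct and follows essentially the same route as the paper's own proof: extract the $\mathcal{O}(\log(\sfrac{1}{\epsilon}))$ iteration count from the R-Linear rate of Theorem \ref{thm:tw+_rlinear}, note one gradient per iteration, and sum $t(k)=k$ to get $\mathcal{O}((\log(\sfrac{1}{\epsilon}))^2)$ communications. The only difference is that you spell out the constants $C$ and $\rho$ explicitly, which the paper leaves implicit.
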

\begin{proof}
	Due to the result of Theorem \ref{thm:tw+_rlinear}, we require $\mathcal{O}\left(\log (\sfrac{1}{\epsilon})\right)$ iterations to get an $\epsilon$-accurate solution, and so the number of gradient evalutions ($\tau_g$) is $\mathcal{O}\left(\log (\sfrac{1}{\epsilon})\right)$. Since we require $k$ communications at the $k^{th}$ iterate, the total number of communications ($\tau_c$) required
	\begin{align*}
	\tau_c = \sum_{i=1}^k i = \frac{(k)(k+1)}{2} = \mathcal{O}\left(k^2\right) = \mathcal{O}\left(\left(\log (\sfrac{1}{\epsilon})\right)^2\right).
	\end{align*}
\end{proof}
Similar analysis can be done to show the work complexity required to get an $\epsilon$-accurate solution for the local iterates. Note, that this can only be done for the local iterates $x_{i,k}$, but not the local iterates $y_{i,k}$ as these iterates do not converge. These results can then be used in our cost framework \eqref{eq:cost} to obtain the final cost of the algorithm.

\subsection{Numerical Results for \ndgd and \ndgdp}
\label{sec:ndgd_num_res}

In this section, we present numerical results demonstrating the performance of the \ndgdt and  \ndgdp methods in practice on quadratic problems and classification problems that arise in machine learning. The aim of this section is to demonstrate that the theoretical results can be realized in practice. More specifically, that the \ndgdt method converges to a neighborhood of the solution and that the \ndgdp method converges to the solution, for objective functions that are strongly convex.

We investigated the performance of $6$ different variants of the \ndgd method and compared against the DGD method. We define the variants of the \ndgd method as \ndgd$(a,b,c)$, where $a$ is the number of initial gradient steps, $b$ is the number of initial consensus steps and $c$ describes if/how we increase the number of communication steps. For example, (i) \ndgd$(10,1,-)$ is the \ndgd method with $10$ gradient steps for every $1$ consensus step with fixed number of consensus and gradient steps; (ii) \ndgdp$(1,1,k)$ is the \ndgdp method with $1$ gradient step and $1$ consensus step initially, and where the number of consensus steps is increased at every iteration (at the $k^{th}$ iteration $k$ consensus steps are performed for every gradient step); and (iii) \ndgdp$(1,1,500)$ is the \ndgdp method with $1$ gradient step and $1$ consensus step initially, and the number of consensus steps is doubled every $500$ iterations.  The last class of methods are practical implementations of NEAR-DGD$^+$, which we found to perform well in the numerical studies. While the theoretical analysis in this paper does not apply to  \ndgd(10,1,-), a method with multiple gradient steps at each iteration, we include it in our numerical studies for comparison purposes. %\rb{While the paper is submitted for publication, we worked on the analysis of methods that incorporate multiple gradient steps which will be made available to public in the near future.} \ab{AB: I do not think we should include these sentences in the paper.}

\subsubsection{Quadratic Problems}

We first investigate the performance of the methods on quadratic functions, similar to those described in Section \ref{sec:num_res_dgdt},
\begin{align*}
		 f(x) = \frac{1}{2} \sum_{i=1}^n x^TA_ix + b_i^Tx
\end{align*}
where each node $i=\{1,...,n\}$ has local information $A_i \in \mathbb{R}^{p\times p}$ and $b_i \in \mathbb{R}^{p} $. For these experiments we chose a dimension size $p=10$, the number of nodes was $n=10$ and the condition number was $\kappa = 10^4$. We considered a $4$-cyclic graph topology (each node is connected to its $4$ immediate neighbors). The markers in the figures in this section are placed every $500$ iterations. In this regard, one can clearly see the effect of the cost per iteration for the different methods. For example, in the \ndgdp$(1,1,k)$ method (dark green lines) of Figure \ref{fig:ndgd_quad}, in terms of iterations we have markers, whereas in terms of cost there are no markers. Of course, this is due to the high communication  cost associated with each iteration.% in the  \ndgdp$(1,1,k)$ method.}

Figure \ref{fig:ndgd_quad} illustrates the performance of the methods; we again plot relative error, ($\|\bar{x}_k - x^\star\|^2/\| x^\star \|^2$) in terms of: (i) iterations, (ii) cost (as described in Section \ref{sec:cost}, with $c_c = c_g = 1$), (iii) number of gradient evaluations, and (iv) number of communications.

%\textbf{Problem Description - Quadratic}
%
%\begin{itemize}
%	\item \textbf{Methods}: \textcolor{black}{DGD}, \textcolor{black}{TW (1,1,-)}, \textcolor{green}{TW (10,1,-)}, \textcolor{magenta}{TW (1,10,-)}, \textcolor{ForestGreen}{TW (1,1,$k$)}, \textcolor{blue}{TW (1,1,500)}, \textcolor{cyan}{TW (1,1,1000)}
%	\item \textbf{Problem}: Quadratic
%	\begin{align*}
%		 f(x) = \frac{1}{2} \sum_{i=1}^n x^TA_ix + b_i^Tx
%	\end{align*}
%	each node $i=\{1,...,n\}$ has local data $A_i \in \mathbb{R}^{n_i\times p}$ and $b_i \in \mathbb{R}^{n_i} $
%	\item \textbf{Parameters}: $n = 10$, $p=10$, $n_i = 10$, $\kappa = 10^4$
%	\item \textbf{Graph}: $4$-cyclic graph
%\end{itemize}

\begin{figure}[]
\centering

\includegraphics[width=0.41\columnwidth]{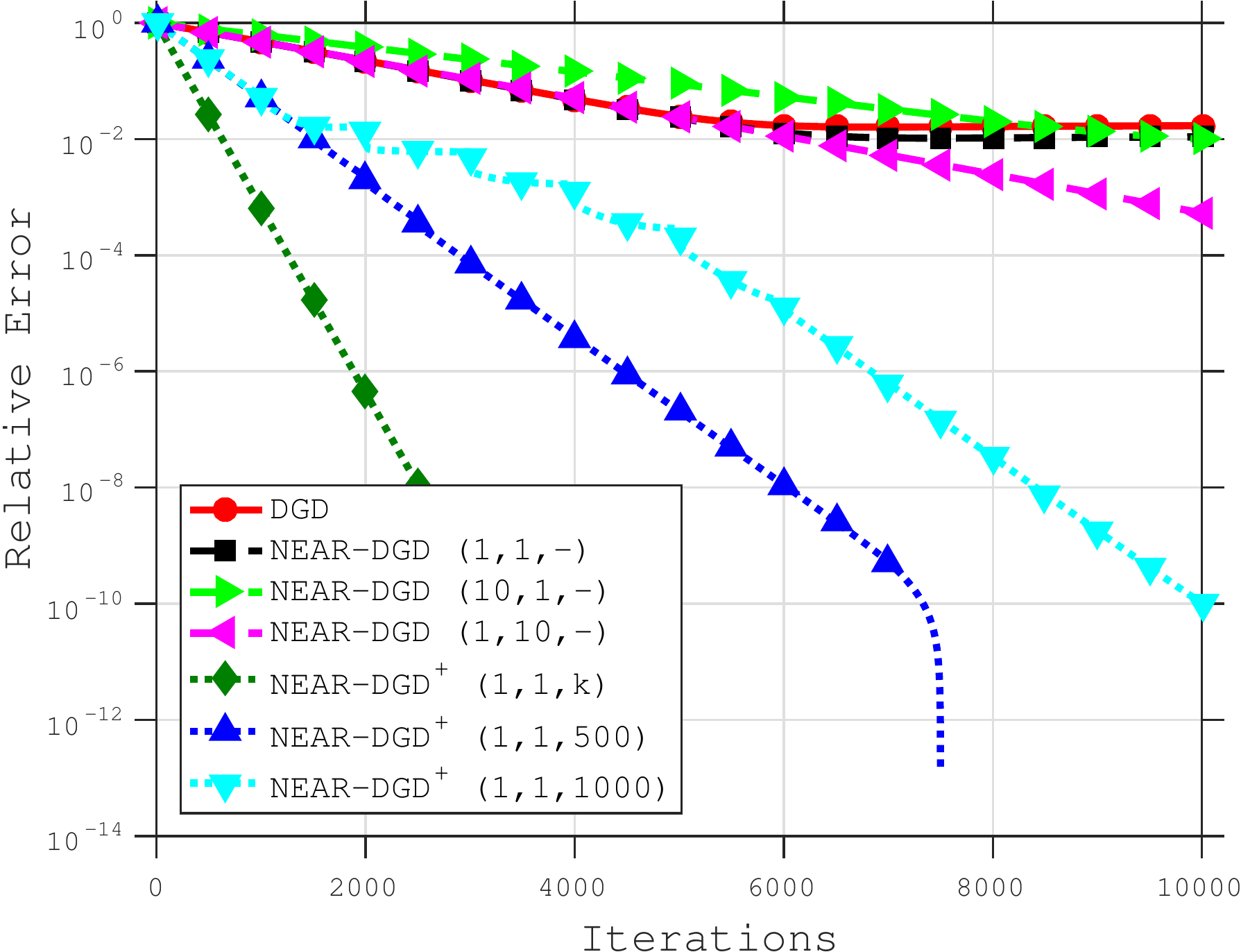}
\hspace{0.2cm}
\includegraphics[width=0.41\columnwidth]{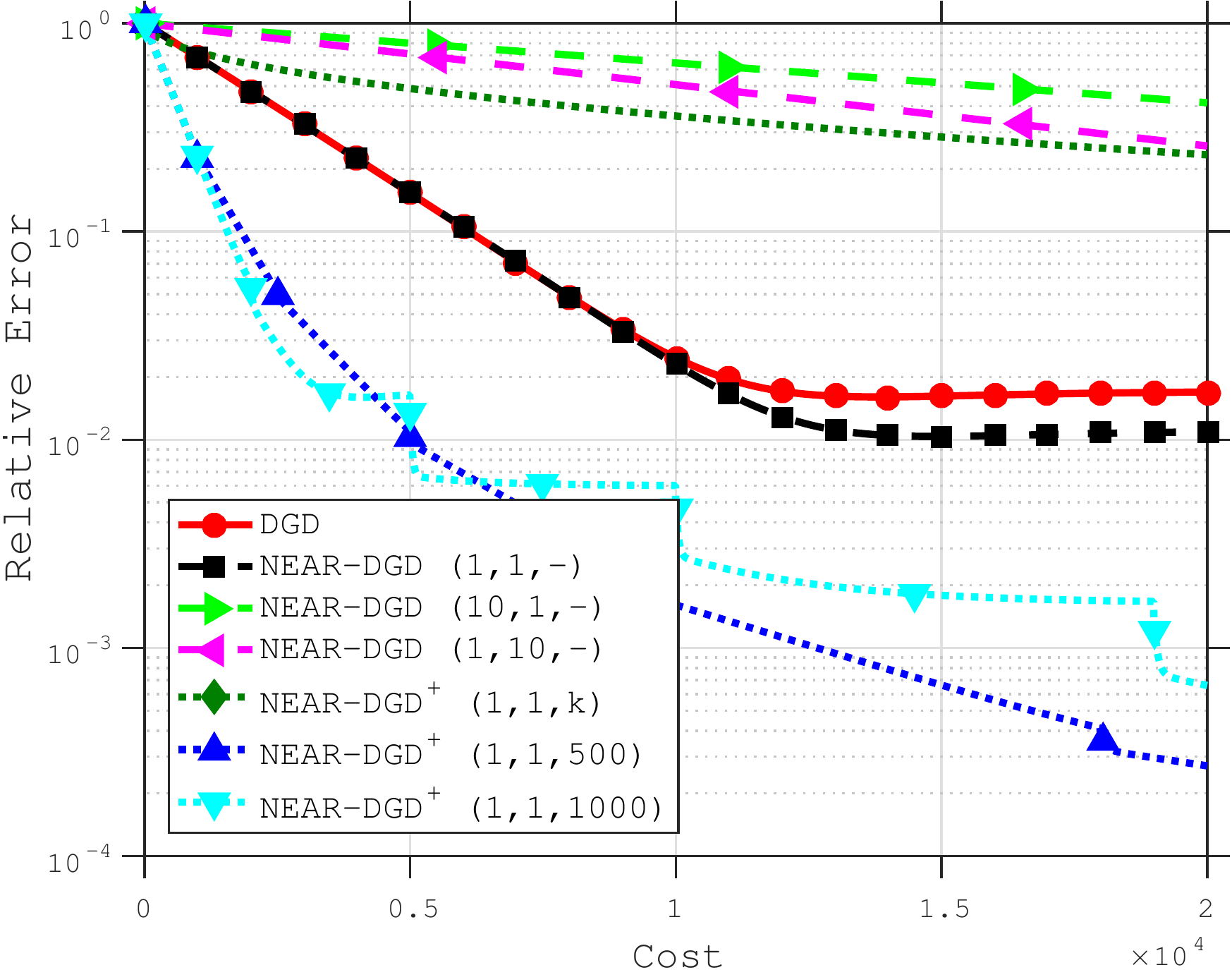}
%\vspace{1cm}
\includegraphics[width=0.41\columnwidth]{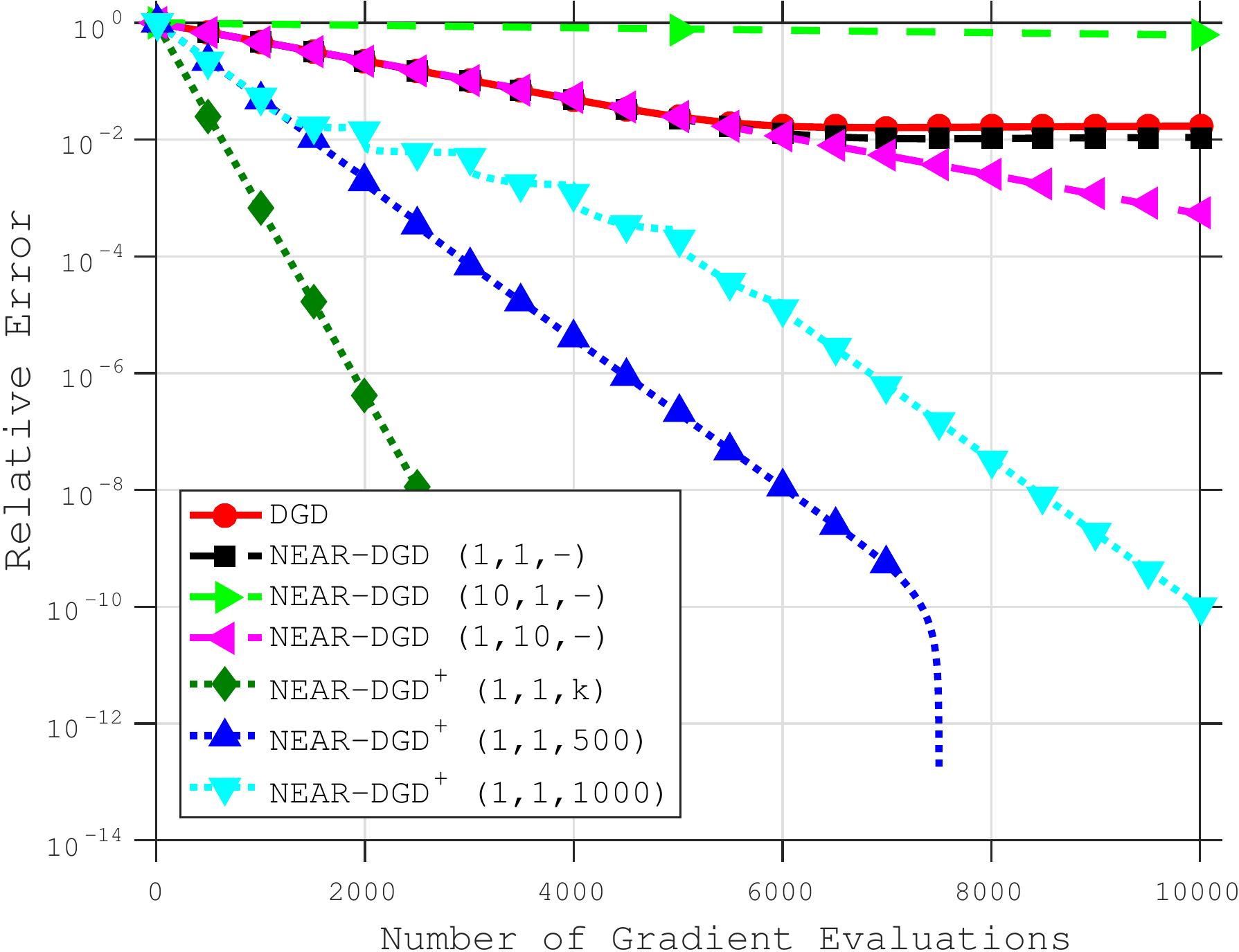}
\hspace{0.2cm}
\includegraphics[width=0.41\columnwidth]{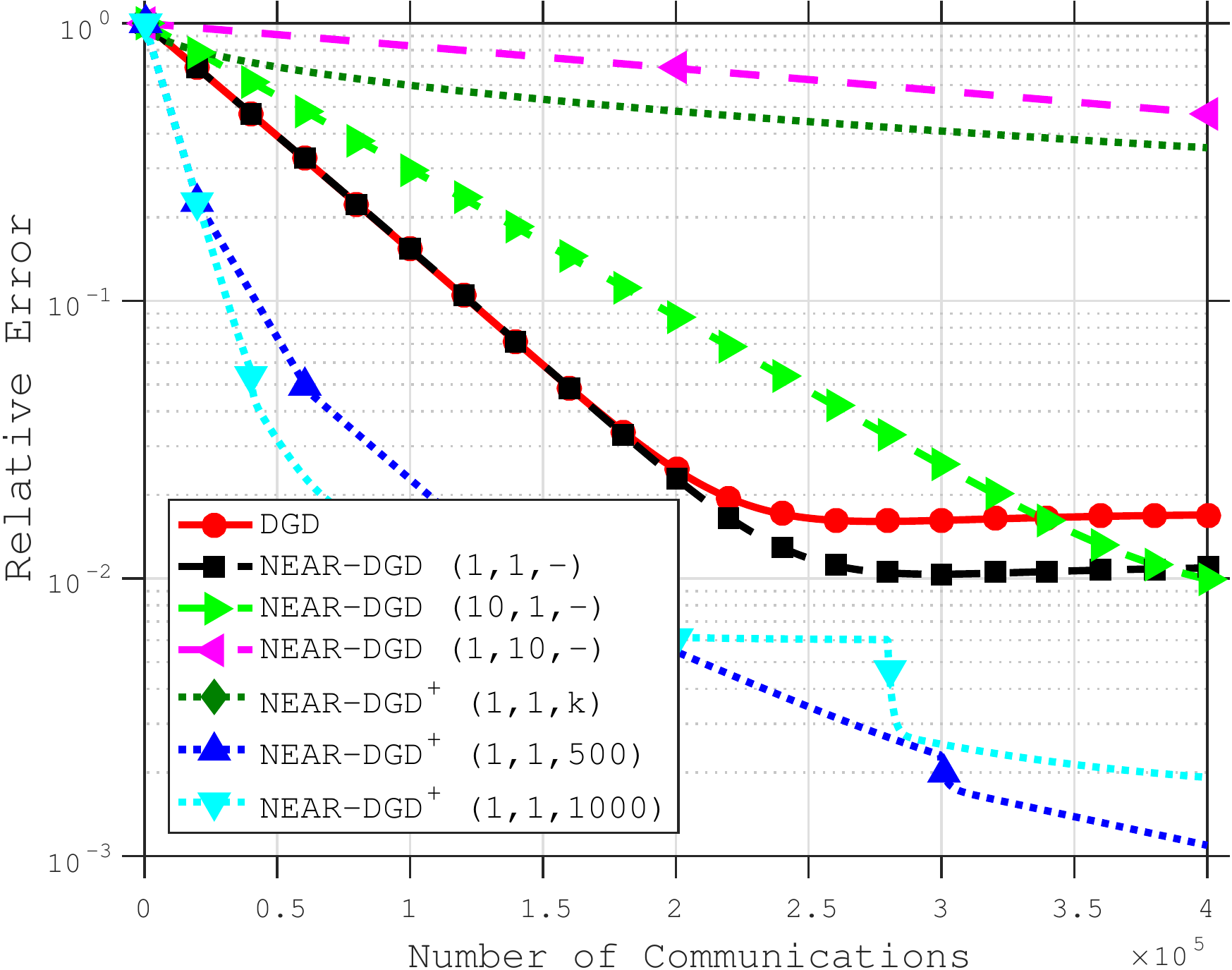}
\caption{ Performance of DGD, NEAR-DGD $(1,1,-)$, NEAR-DGD $(10,1,-)$, NEAR-DGD $(1,10,-)$, \ndgdp $(1,1,k)$, \ndgdp $(1,1,500)$, \ndgdp $(1,1,1000)$ measured in terms of relative error ($\|\bar{x}_k - x^\star\|^2/\| x^\star \|^2$) with respect to: (i) number of iterations, (ii) cost, (iii) number of gradient evaluations, and (iv) number of communications, on a quadratic problem ($n = 10$, $p=10$, $\kappa = 10^4$).}
\label{fig:ndgd_quad}
\end{figure}

Figure \ref{fig:ndgd_quad} shows the convergence rates of the methods. As predicted by the theory, it is evident that the methods that do not increase the number of consensus steps converge only to a neighborhood of the solution, whereas methods that increase the number of consensus steps converge to the solution. We note in passing that the \ndgd method (black line) converges to a better neighborhood that the DGD method (red line); this is not predicted by the theory, and is probably an artifact of the specific problem.

The \ndgdp method converges to the solution as predicted by the theory. In terms of number of iterations, the fastest method is the \ndgdp$(1,1,k)$ method. This is not surprising as the amount of work per iteration in this method is higher than that of the of \ndgdp$(1,1,500)$ and \ndgdp$(1,1,1000)$ (the practical variants). When comparing the methods in terms of the cost or the number of communications, the practical variants of the \ndgdp method perform significantly better. The cost metric in this case is a better indication of the performance of the method. In Figure \ref{fig:ndgd_quad} we assumed that the cost of a gradient evaluation and the cost of communication was the same, namely $c_c = c_g = 1$.

%\todo[inline]{\textcolor{black}{AB: discuss about markers in plots.}}

In Figure \ref{fig:ndgd_quad_diff_cost} we show the performance of the methods for different cost structures: (i) $c_c = 1, c_g = 10$; (ii) $c_c = 1, c_g = 1$; (iii) (i) $c_c = 10, c_g = 1$. The cost structures where $c_c \neq c_g$ arise in applications (problems) such as those described in Section \ref{sec:cost}. For example, the cost structure $c_c = 1, c_g = 10$ can be found in applications such as large scale machine learning problems on a cluster of physically connected computers, and the cost structure $c_c = 10, c_g = 1$ can be in found in applications such as controlling a swarm of battery powered robots.

\begin{figure}[]
\centering

\includegraphics[width=0.32\columnwidth]{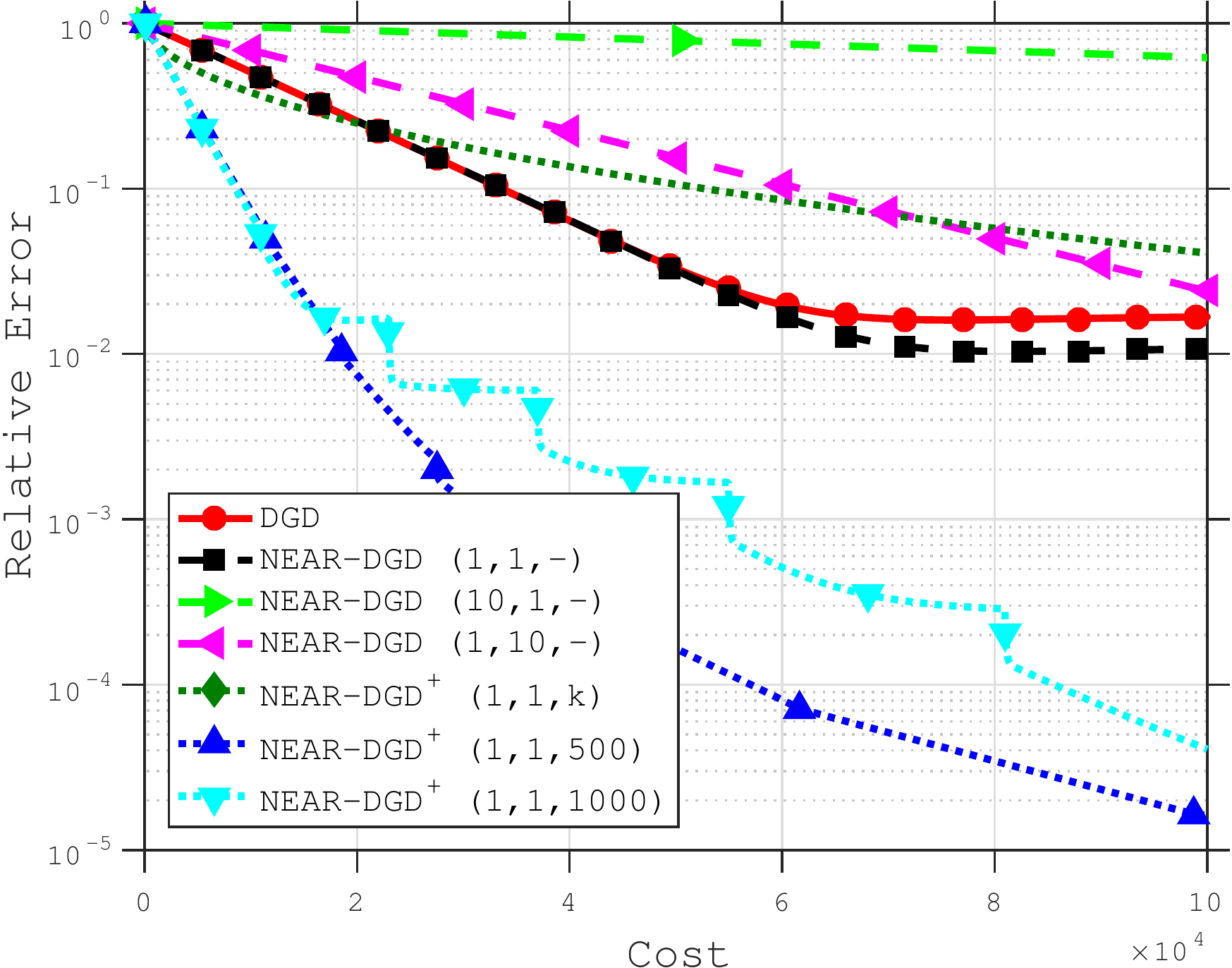}
\includegraphics[width=0.32\columnwidth]{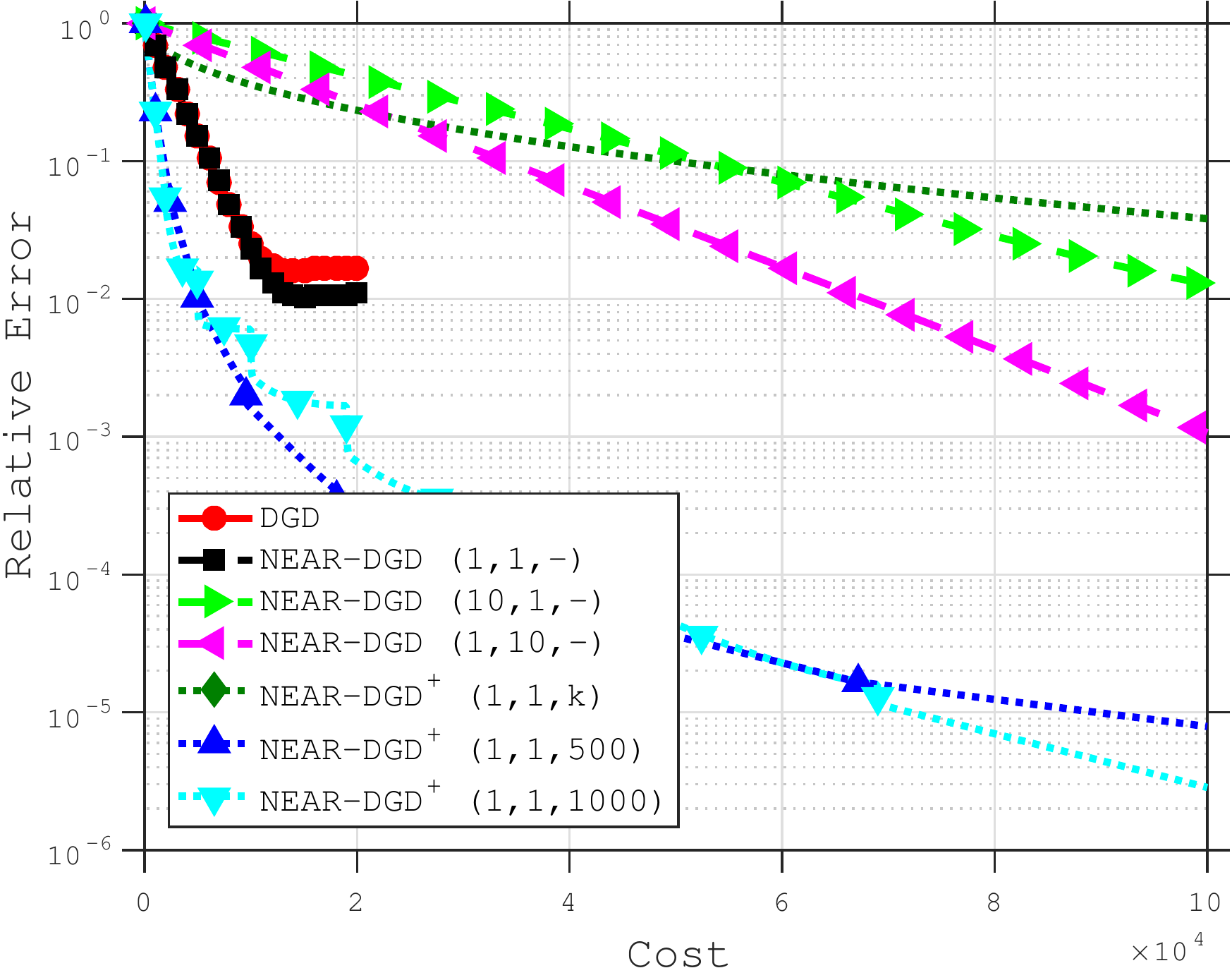}
\includegraphics[width=0.32\columnwidth]{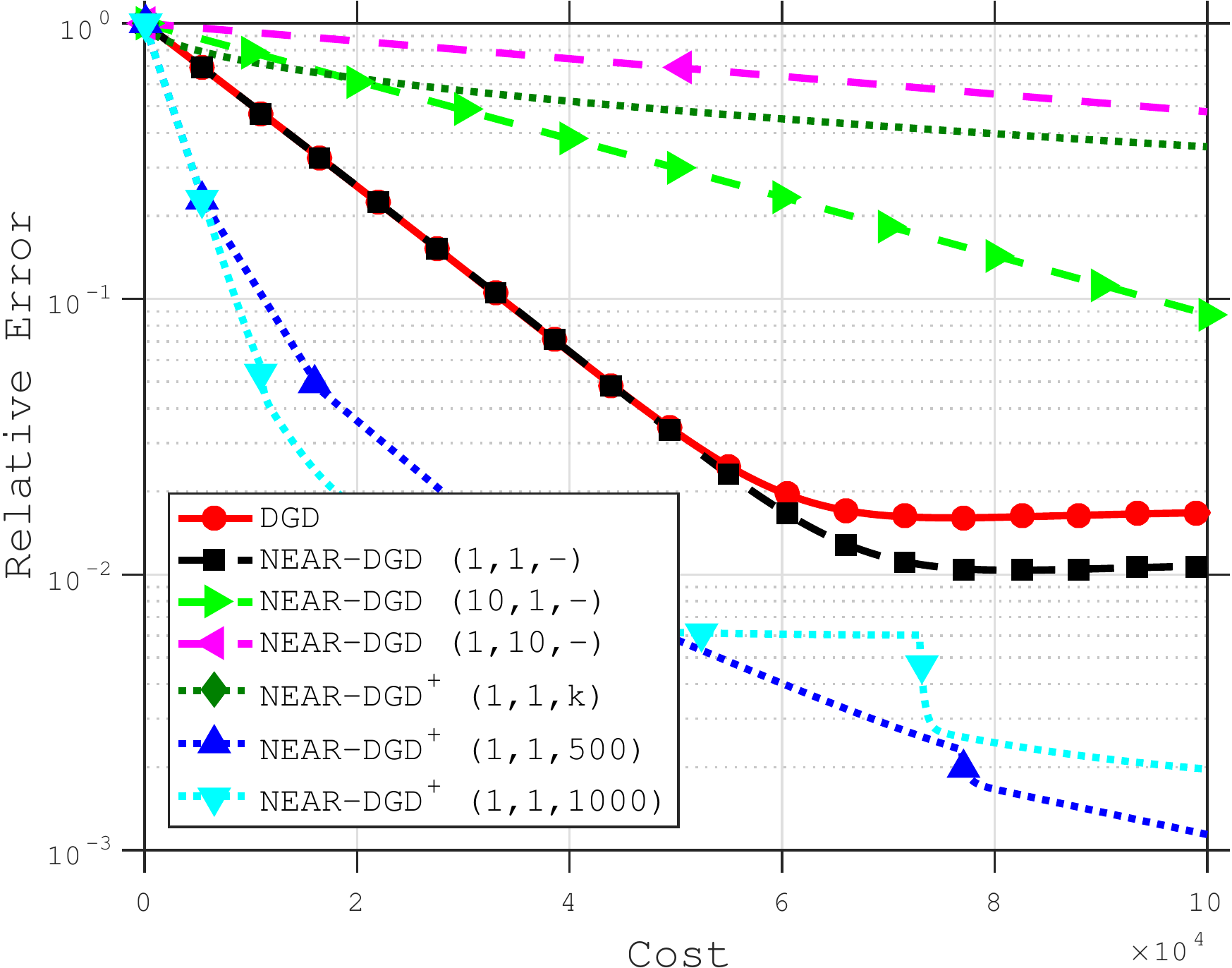}
\caption{Performance of DGD, NEAR-DGD (1,1,-), NEAR-DGD (10,1,-), NEAR-DGD (1,10,-), \ndgdp (1,1,k), \ndgdp (1,1,500), \ndgdp (1,1,1000) measured in terms of relative error ($\|\bar{x}_k - x^\star\|^2/\| x^\star \|^2$) with respect to different cost structures, on a quadratic problem ($n = 10$, $p=10$, $\kappa = 10^4$). {\textbf{Left}: $\textcolor{black}{c_{c}}=1$, $\textcolor{black}{c_{g}}=10$;} {\textbf{Center}: $\textcolor{black}{c_{c}}=1$, $\textcolor{black}{c_{g}}=1$;} {\textbf{Right}: $\textcolor{black}{c_{c}}=10$, $\textcolor{black}{c_{g}}=1$.}}
 \label{fig:ndgd_quad_diff_cost}
\end{figure}

Figure \ref{fig:ndgd_quad_diff_cost} shows that the performance of the methods is highly dependent on the specific cost structure of the application. Although in all cases the practical variants of the \ndgdp method perform the best in terms of the cost, the benefit of doing multiple consensus steps varies. On the left-most figure, the benefits are very apparent, whereas on the right-most figure, the benefits are not as apparent. That being said, of course, it is still the case that the methods that do not increase the number of consensus steps cannot converge to the solution.

\subsubsection{Binary Classification Logistic Regression Problems}

We now show numerical results illustrating the performance of the \ndgdt and \ndgdp methods on binary classification logistic regression problems that arise in machine learning. The objective function can be expressed as
\begin{align*}
		 f(x) = \frac{1}{n\cdot n_i} \sum_{i=1}^n \log(1 + e^{-(b_i)^T (A_ix)}) + \frac{1}{n \cdot n_i} \| x\|_2^2
\end{align*}
where $A \in \mathbb{R}^{n\cdot n_i \times p}$ and $b \in \{-1,1 \}^{n\cdot n_i }$, ($n$ denotes the number of nodes, $n_i$ denotes the size of the local data and $p$ is the dimension of the problem) and each node $i=1,...,n$ has a portion of $A$ and $b$, $A_i \in \mathbb{R}^{n_i \times p}$ and $b_i \in \mathbb{R}^{n_i}$. We report results for the \texttt{mushrooms} dataset ($n = 10$, $p=114$, $n_i = 812$) \cite{chang2011libsvm}, where the underlying graph is $4$-cyclic. Similar results were obtained for other standard machine learning datasets. For this experiment we set $c_c = 1, c_g = 1$.

\begin{figure}[]
\centering
\includegraphics[width=0.41\columnwidth]{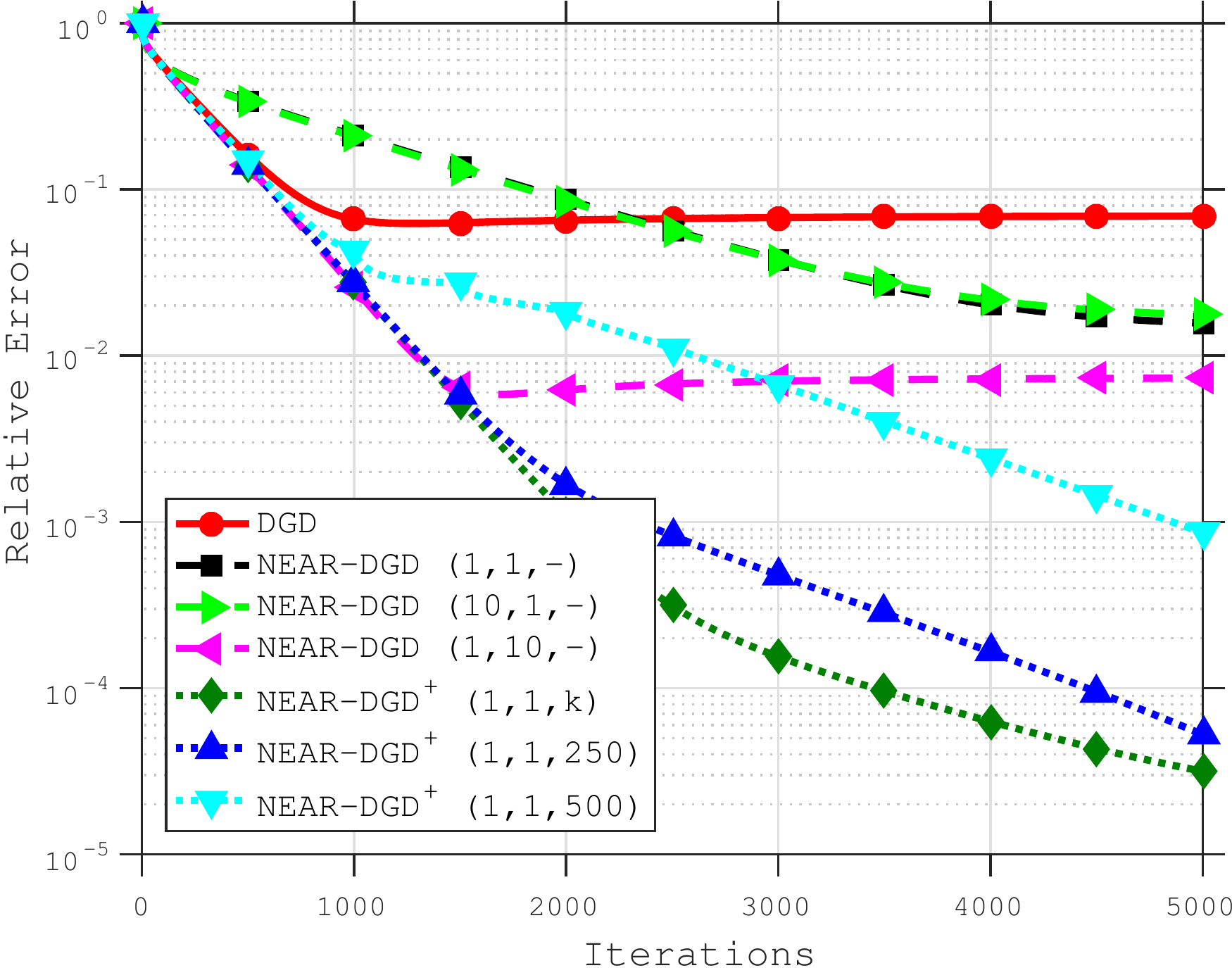}
\hspace{0.2cm}
\includegraphics[width=0.41\columnwidth]{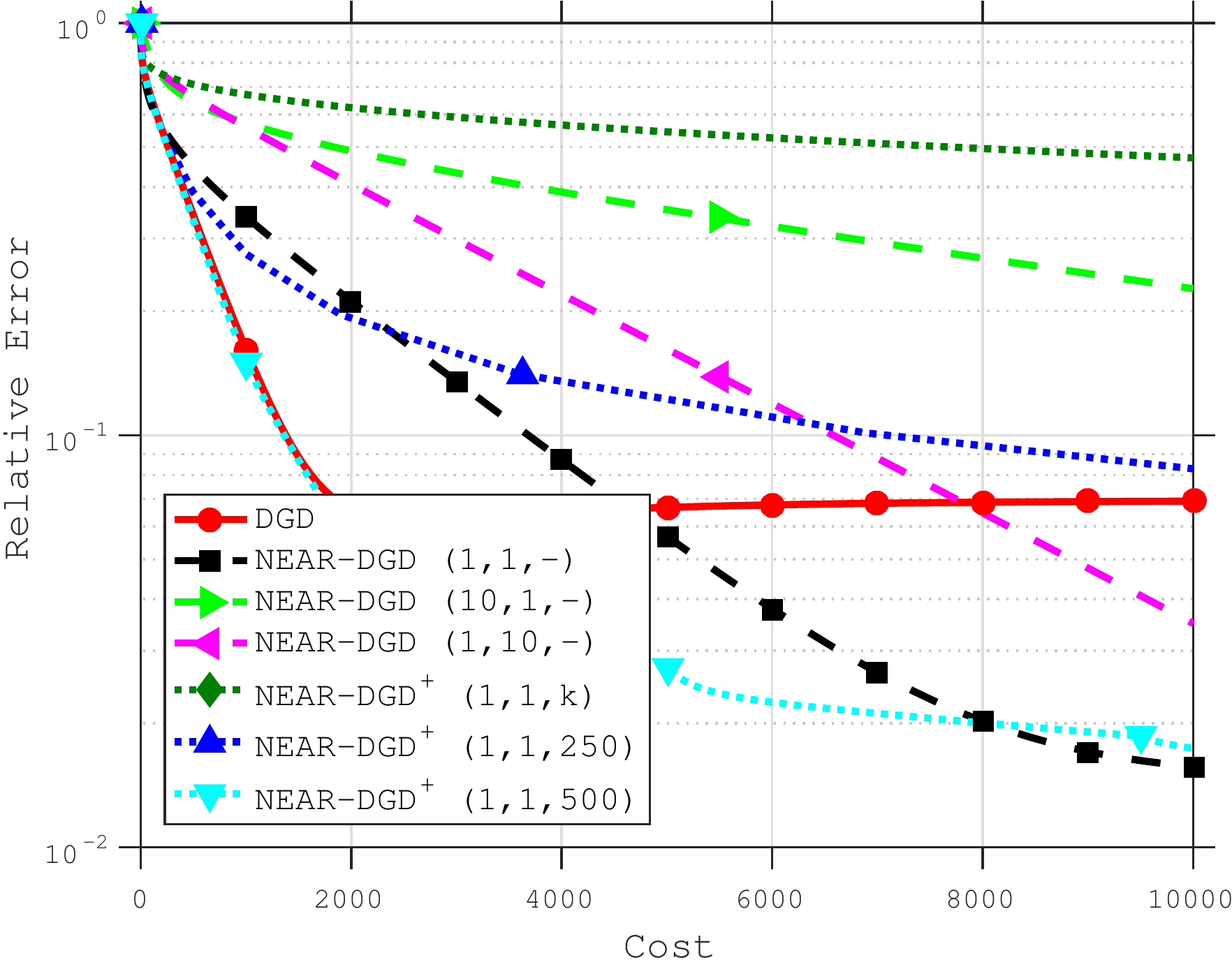}
\hspace{0.1cm}
\includegraphics[width=0.41\columnwidth]{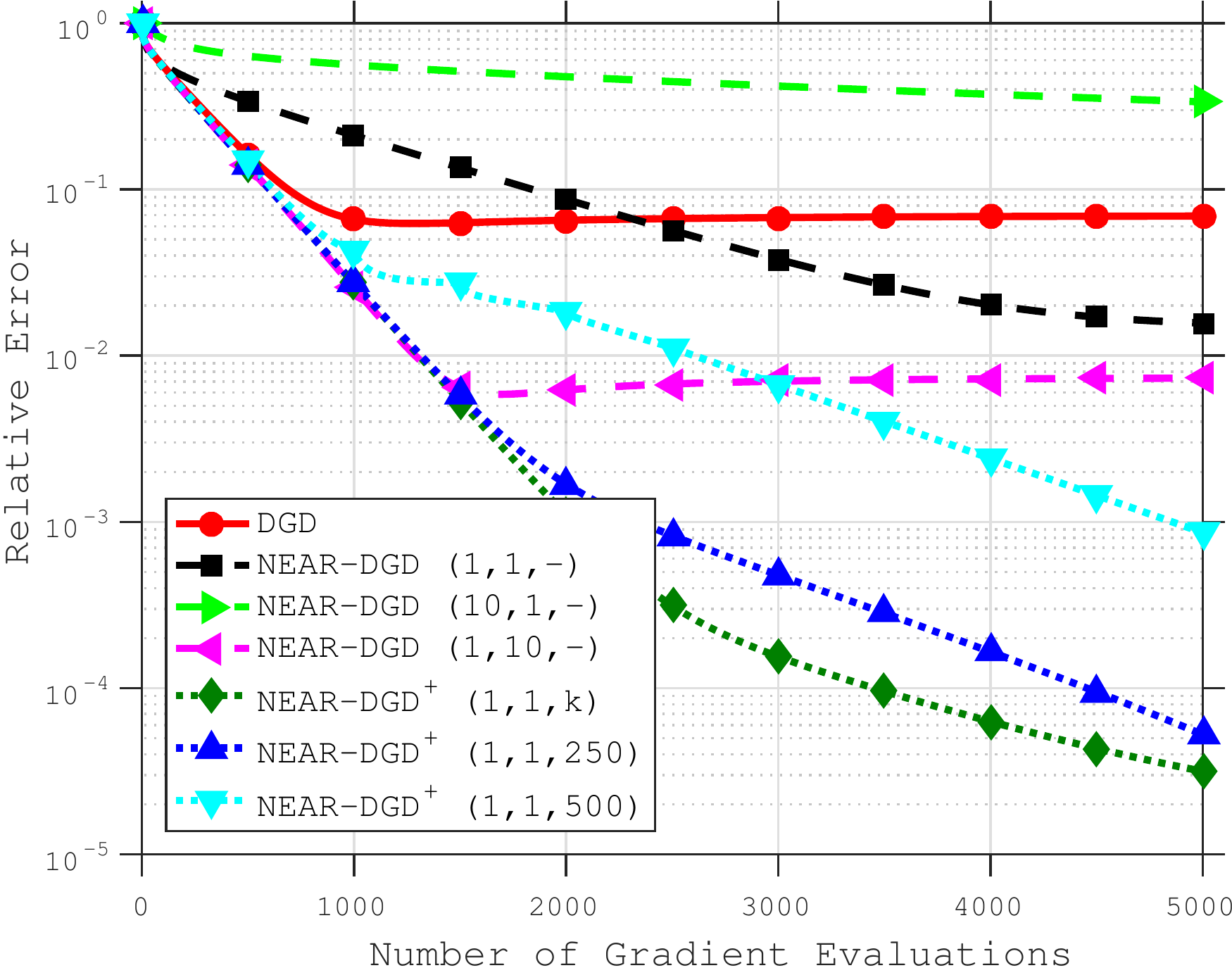}
\hspace{0.2cm}
\includegraphics[width=0.41\columnwidth]{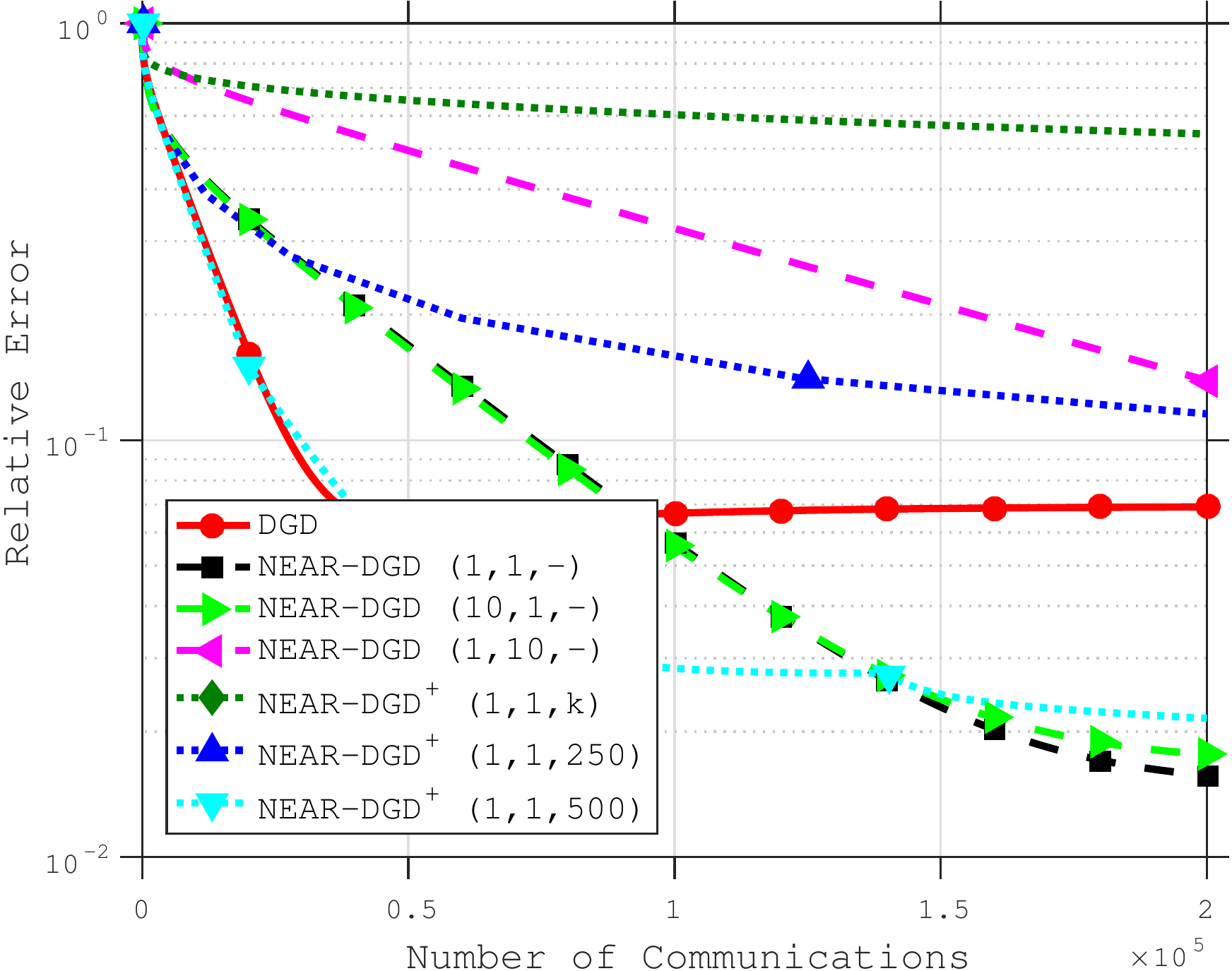}
\caption{Performance of DGD, NEAR-DGD $(1,1,-)$, NEAR-DGD $(10,1,-)$, NEAR-DGD $(1,10,-)$, \ndgdp $(1,1,k)$, \ndgdp $(1,1,500)$, \ndgdp $(1,1,1000)$ measured in terms of relative error ($\|\bar{x}_k - x^\star\|^2/\| x^\star \|^2$) with respect to: (i) number of iterations, (ii) cost, (iii) number of gradient evaluations, and (iv) number of communications, on a binary classification logistic regression problem (\texttt{mushroom}. $n = 10$, $p=114$, $n_i = 812$).}
\label{fig:log_reg}
\end{figure}

Figure \ref{fig:log_reg} illustrates the convergence rates of the $7$ methods. The results are similar to those for the quadratic problem. Namely, the variants that increase the number of consensus steps converge to the optimal solution whereas the other methods only converge to a neighborhood of the solution. Interestingly, the \ndgd(1,1,-) method is able to converge to a significantly better neighborhood that the base DGD method. Moreover, for a fixed budget (cost), it appears that the \ndgd(1,1,-) method is competitive with the \ndgdp variants. We should note that the figure plotting the error with respect to the cost does not show the outcome of the full experiment but rather only till the point that the DGD method terminated. However, looking at the per-iterations plots, the performance of the \ndgd(1,1,-) method stagnates whereas the performance of the \ndgdp methods does not.

%%%%%%%%%%%%%%%%%%%%%
%%%%%%%%%%%%%%%%%%%%%
\section{Final Remarks and Future Work}
\label{sec:fin_remarks}

In this paper, we propose an adaptive cost framework to evaluate the performance of distributed optimization methods. Given a specific application, our framework incorporates the costs associated with both communication and computation in order to evaluate the efficiency of distributed optimization methods. This work is a first step towards applying the proposed general cost framework. In particular, we study the well-known distributed gradient descent (DGD) method and decompose its communication and computation steps to construct three classes of more flexible methods: DGD$^t$, \ndgdt and NEAR-DGD$^+$.

The flexibility for each of these methods is illustrated by the fact that multiple consensus steps can be performed per gradient evaluation in environments where communication is relatively inexpensive. We show both theoretically and empirically that multiple consensus steps lead to better solution quality. We also design NEAR-DGD$^+$, an exact first order method, which increases the number of consensus steps as the algorithm progresses. As such, \ndgdp with a constant steplength converges to the optimal solution, as opposed to the standard DGD method that only converges to a neighborhood of the optimal solution.  Additionally, we show that for strongly convex functions, the \ndgdp method converges at a linear rate. Finally, through numerical experiments of different instances of these methods on quadratic and (binary classification) logistic regression problems, we illustrate the empirical performance of the methods and demonstrate the flexibility offered by our framework.

We should note that the same communication-computation decomposition approach can be applied seamlessly to many other distributed optimization methods, and this is a direction of future research that we wish to explore. Moreover, we plan to include other cost aspects into this framework, such as memory access, quantization and dynamic environments. Lastly, the question of how to automatically adjust the number of communication and computation steps, in an algorithmic way, depending on the environment, is a direction that we are currently investigating.

\ifCLASSOPTIONcaptionsoff
  \newpage
\fi

% trigger a \newpage just before the given reference
% number - used to balance the columns on the last page
% adjust value as needed - may need to be readjusted if
% the document is modified later
%\IEEEtriggeratref{8}
% The "triggered" command can be changed if desired:
%\IEEEtriggercmd{\enlargethispage{-5in}}

% references section

% can use a bibliography generated by BibTeX as a .bbl file
% BibTeX documentation can be easily obtained at:
% http://mirror.ctan.org/biblio/bibtex/contrib/doc/
% The IEEEtran BibTeX style support page is at:
% http://www.michaelshell.org/tex/ieeetran/bibtex/
%\bibliographystyle{IEEEtran}
% argument is your BibTeX string definitions and bibliography database(s)
%\bibliography{IEEEabrv,../bib/paper}
%
% <OR> manually copy in the resultant .bbl file
% set second argument of \begin to the number of references
% (used to reserve space for the reference number labels box)
\bibliographystyle{IEEEtran}
\bibliography{balCC,citationSplitting}

% biography section
%
% If you have an EPS/PDF photo (graphicx package needed) extra braces are
% needed around the contents of the optional argument to biography to prevent
% the LaTeX parser from getting confused when it sees the complicated
% \includegraphics command within an optional argument. (You could create
% your own custom macro containing the \includegraphics command to make things
% simpler here.)
%\begin{IEEEbiography}[{\includegraphics[width=1in,height=1.25in,clip,keepaspectratio]{mshell}}]{Michael Shell}
% or if you just want to reserve a space for a photo:

\begin{IEEEbiography}[{\includegraphics[width=1in,height=1.25in,clip,keepaspectratio]{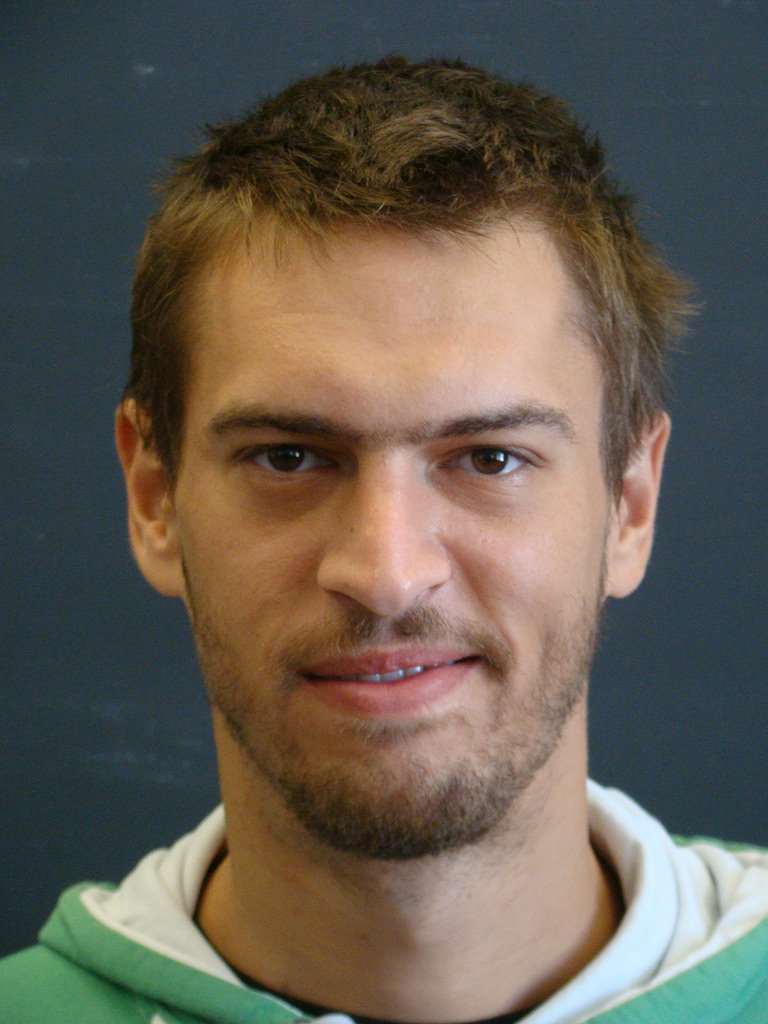}}]{Albert S. Berahas}
 is currently a Postdoctoral Research Fellow in the Industrial Engineering and Management Sciences (IEMS) Dept. at Northwestern University working under the supervision of Professor Jorge Nocedal. He completed his PhD studies in the Engineering Sciences and Applied Mathematics (ESAM) Dept. at Northwestern University in 2018, advised by Professor Jorge Nocedal. He received his undergraduate degree in Operations Research and Industrial Engineering (ORIE) from Cornell University in 2009, and in 2012 obtained an M.S. degree in Applied Mathematics from Northwestern University. Berahas has received the ESAM Outstanding Teaching Assistant Award, the Walter P. Murphy Fellowship and the John N. Nicholson Fellowship. Berahas' research interests include optimization algorithms for machine learning, convex optimization and analysis, derivative-free optimization and distributed optimization.
\end{IEEEbiography}

% if you will not have a photo at all:
\begin{IEEEbiography}[{\includegraphics[width=1in,height=1.25in,clip,keepaspectratio]{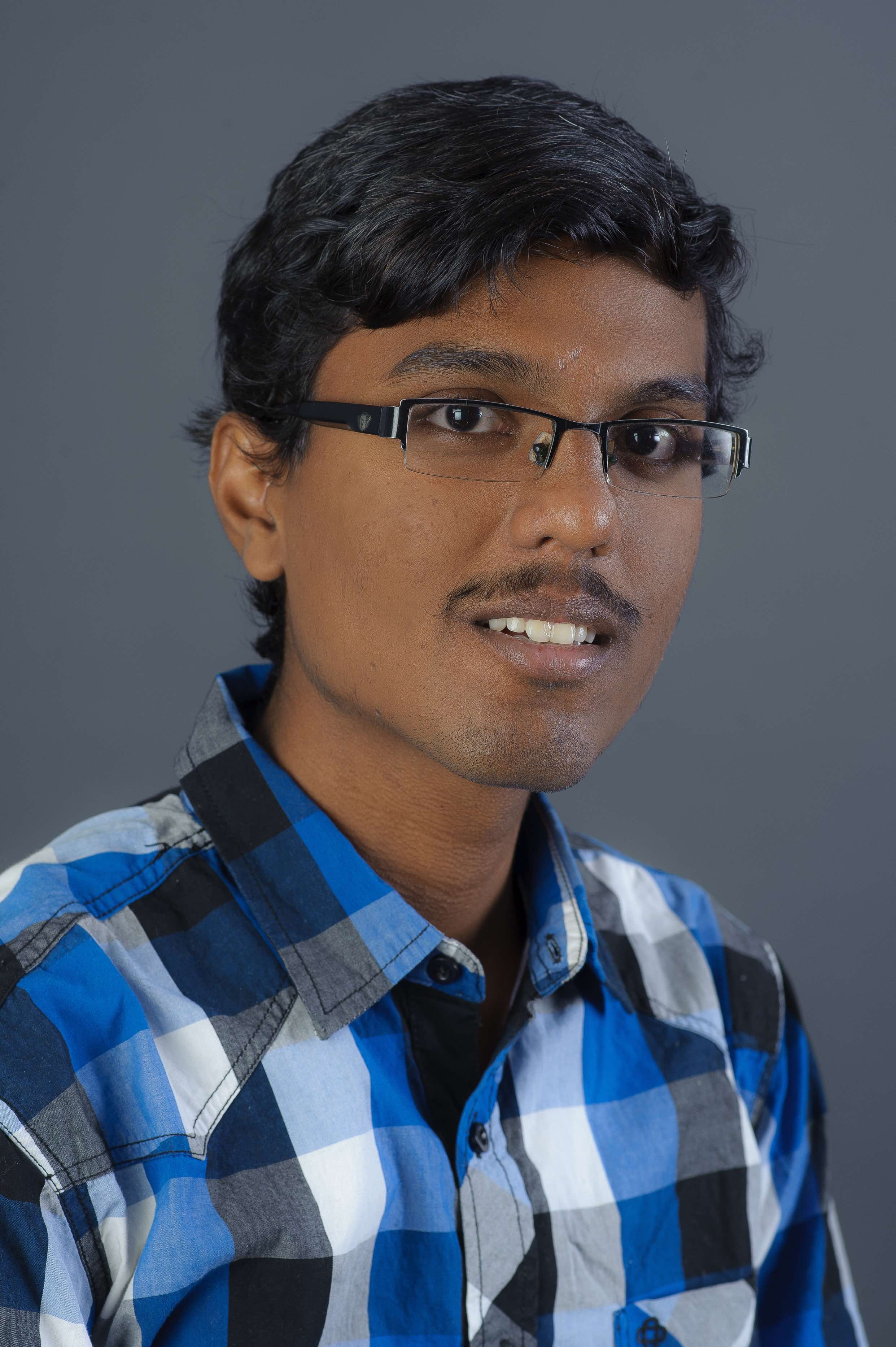}}]{Raghu Bollapragada}
is currently a PhD student in the Industrial Engineering and Management Sciences (IEMS) Dept. at Northwestern University, working under the supervision of Professor Jorge Nocedal. He received his undergraduate degree in Mechanical Engineering from Indian Institute of Technology (IIT) Madras, India in 2014. Bollapragada has received the IEMS Arthur P.  Hurter Award for outstanding academic excellence among first year graduate students and the Walter P. Murphy Fellowship at Northwestern University. Bollapragada's research interests include algorithms for machine learning, large-scale nonlinear optimization methods, convex optimization and analysis, stochastic optimization and distributed optimization.
\end{IEEEbiography}

% insert where needed to balance the two columns on the last page with
% biographies
%\newpage

\begin{IEEEbiography}[{\includegraphics[width=1in,height=1.25in,clip,keepaspectratio]{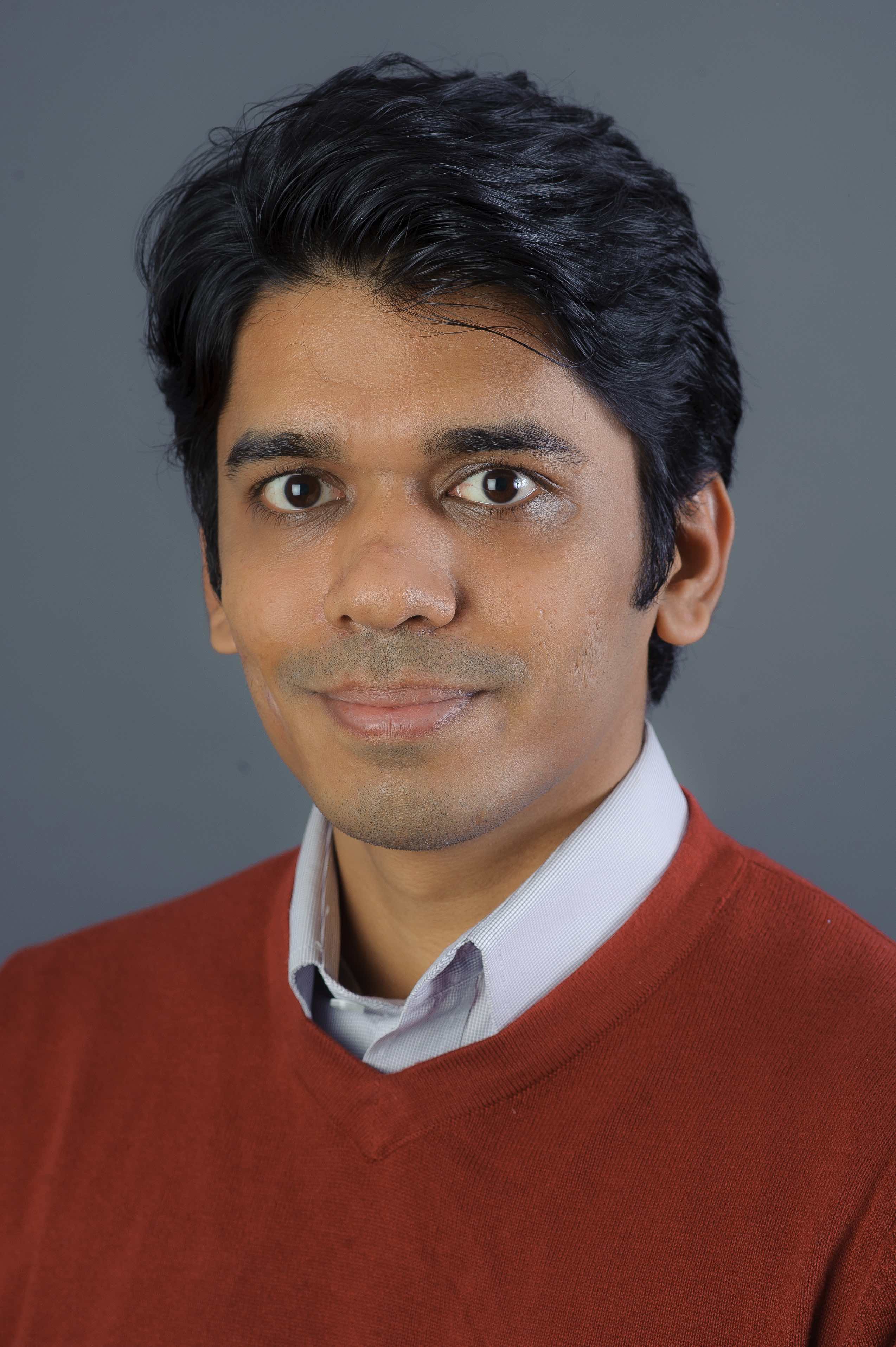}}]{Nitish Shirish Keskar}
is currently a Senior Research Scientist at Salesforce Research. He completed his PhD studies in Industrial Engineering and Management Sciences at Northwestern University in 2017. He was co-advised by Jorge Nocedal and Andreas Waechter. Prior to his PhD, Keskar received his undergraduate degree in Mechanical Engineering from VJTI in Mumbai, India. Keskar is the recipient of the 2017 best paper award for the Optimization Methods and Software journal for his paper on second-order method for L1 regularized convex optimization. Keskar's research interests include nonlinear optimization, deep learning and large-scale computing.
\end{IEEEbiography}

\begin{IEEEbiography}[{\includegraphics[width=1in,height=1.25in,clip,keepaspectratio]{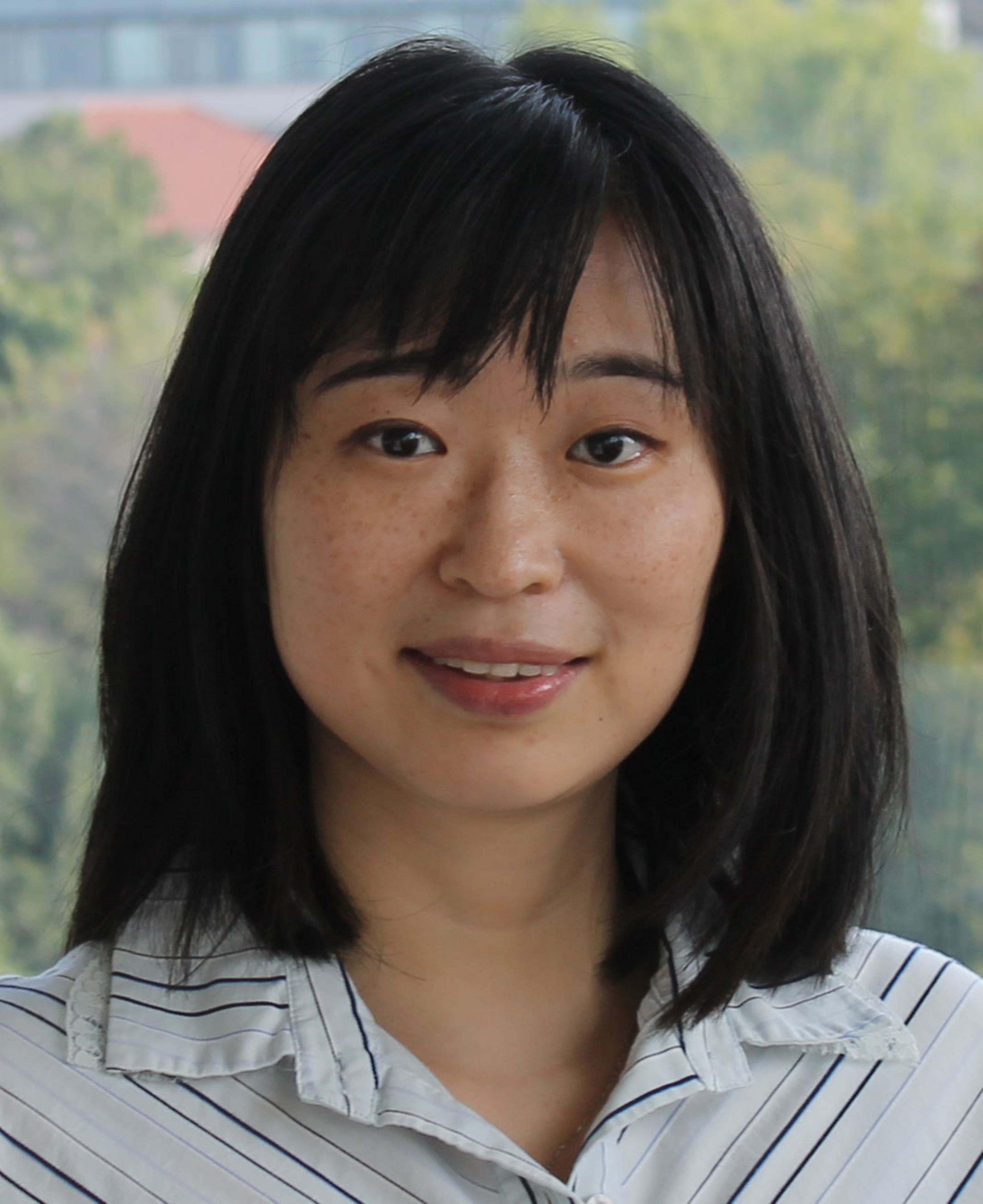}}]{Ermin Wei}
is currently an Assistant Professor at the EECS Dept. of Northwestern University. She completed her PhD studies in Electrical Engineering and Computer Science at MIT in 2014, advised by Professor Asu Ozdaglar, where she also obtained her M.S.. She received her undergraduate triple degree in Computer Engineering, Finance and Mathematics with a minor in German, from University of Maryland, College Park. Wei has received many awards, including the Graduate Women of Excellence Award, second place prize in Ernst A. Guillemen Thesis Award and Alpha Lambda Delta National Academic Honor Society Betty Jo Budson Fellowship. Wei'€™s research interests include distributed optimization methods, convex optimization and analysis, smart grid, communication systems and energy networks and market economic analysis.
\end{IEEEbiography}

% You can push biographies down or up by placing
% a \vfill before or after them. The appropriate
% use of \vfill depends on what kind of text is
% on the last page and whether or not the columns
% are being equalized.

%\vfill

% Can be used to pull up biographies so that the bottom of the last one
% is flush with the other column.
%\enlargethispage{-5in}

% that's all folks
\end{document}